\numberwithin{equation}{section}
\theoremstyle{plain}
\newtheorem{thm}{Theorem}[section]
\newtheorem{lem}[thm]{Lemma}
\newtheorem{prop}[thm]{Proposition}
\newtheorem{coro}[thm]{Corollary}
\theoremstyle{definition}
\newtheorem{df}[thm]{Definition}
\newtheorem{rem}[thm]{Remark}
\newtheorem{eg}[thm]{Example}
\newtheorem{prob}{Problem}
\tikzset{%
world/.style={circle,draw,minimum size=0.5cm,fill=gray!15},
% labels
label/.style={shape=rectangle, inner sep=6pt},
% elements
shaded/.style={draw, shape=circle, fill=black!35, inner sep=1.4pt},
unshaded/.style={draw, shape=circle, fill=white, inner sep=1.4pt},
quasi/.style={draw, shape=rectangle, rounded corners=3pt, fill=white, inner sep=2.5pt, minimum height=14.5pt},
% blobs
blob/.style={draw, shape=rectangle, rounded corners=12pt, thin, densely dotted},
% lines
order/.style={thin},
curvy/.style={thin, looseness=1.2, bend angle=70},
fatcurvy/.style={thin, looseness=1.7, bend angle=75},
% arrows
map/.style={->, densely dashed, shorten >=5pt, shorten <=5pt, >=stealth', looseness=1.1},
operationgj/.style={->, densely dashed, shorten >=5pt, shorten <=18pt, >=stealth', looseness=1.1},
relationlejk/.style={->, shorten >=5pt, shorten <=5pt, >=stealth'},
% arrow labelling
auto}
\newcommand{\eusbA}{\medsub e {\kern-0.75pt\A\kern-0.75pt}}
\newcommand{\bbar}[1]{{\underline{\mathbf{#1}}}}
\newcommand{\twiddle}[1]{{\smash{\underset{\raise.375ex\hbox{$\smash\sim$}}
       {\mathbf{#1}}}\vphantom{\underline{\mathbf{#1}}}}} %new twiddle
\newcommand{\stwiddle}[1]{\smash{\underset{\smash{\raise.1ex\hbox{\small$\sim$}}}
                         {\mathbf{#1}}}\vphantom{#1}}
\newcommand{\twoB}{\bbar{2}}
\newcommand{\twoT}{\twiddle 2}
\newcommand{\mpe}[2]{\CG^{\rm mp}(#1,#2)}
\newcommand{\mph}[2]{\CL^{\rm mp}(#1,#2)}
\newcommand{\mpm}[2]{\CG_\T^{\rm mp}(#1,#2)}
\newcommand{\cat}[1]{\boldsymbol{\mathscr{#1}}}
\newcommand{\CL}{\cat L}
\newcommand{\CG}{\cat G}
\newcommand{\Lalg}{\mathbf{L}}
\font\bmi=cmmi8 scaled 1440
\newcommand{\powerset}{\raise.6ex\hbox{\bmi\char'175 }}
\newcommand{\T}{\mathscr{T}}
\DeclareMathOperator{\dom}{dom} %added
\renewcommand{\le}{\leqslant}
\newcommand{\MDFIP}[2]{\langle {\uparrow} #1,{\downarrow} #2\rangle}
\newcommand{\mdfip}[2]{\langle  #1,  #2\rangle}
\newcommand{\varex}{\varepsilon_x}
\begin{document}
%% FRONT MATTER
%ac1
\title{Dual spaces of lattices and semidistributive lattices}

%% First author: in the order \author, \address, \urladdr, \email
\author{Andrew Craig}
\address{Department of Mathematics and Applied Mathematics\\
University of Johannesburg\\PO Box 524, Auckland Park, 2006\\South~Africa}
\urladdr{https://orcid.org/0000-0002-4787-3760}

\email{acraig@uj.ac.za}

\author{Miroslav Haviar}
\address{Department of Mathematics\\
Faculty of Natural Sciences\\
M. Bel University\\
Tajovsk\'{e}ho 40, 974 01 Bansk\'{a} Bystrica\\
Slovakia\\and\\Department of Mathematics and Applied Mathematics\\
University of Johannesburg\\PO Box 524, Auckland Park, 2006\\South~Africa}
\urladdr{https://orcid.org/0000-0002-9721-152X}
\email{miroslav.haviar@umb.sk}

%% corrauthor: in the order \author, \address, \urladdr, \email
\author{Jos\'{e} S\~{a}o Jo\~{a}o}
\address{Institut de Recherche Math\'{e}matique Avanc\'{e}e\\ 
UMR 7501, Université de Strasbourg\\
7 rue René-Descartes\\
67084 Strasbourg CEDEX\\France}
\urladdr{https://orcid.org/0000-0001-5078-362X}
\email{saojoao@unistra.fr}

\thanks{The first author acknowledges the National Research Foundation (NRF) of South Africa grant
127266 as well as the Slovak National Scholarship Programme.
The second author acknowledges support by Slovak VEGA grant 1/0152/22.}

%% List classification codes in order of relevance
\subjclass{06B15, 06A75, 05C20}

%% Key words and phrases
\keywords{join semidistributive lattice, meet semidistributive lattice, semidistributive lattice, 
dual digraph, TiRS digraph}

\begin{abstract}
Birkhoff's 1937 dual representation of finite distributive lattices via finite posets was in 1970  extended to a dual representation of arbitrary distributive lattices via compact totally order-disconnected topological spaces by Priestley. This result enabled the development of natural duality theory in the 1980s by Davey and Werner, later on also in collaboration with Clark and Priestley. In 1978 Urquhart extended Priestley's representation to general lattices via compact doubly quasi-ordered topological spaces (L-spaces). In 1995 Plo\v{s}\v{c}ica presented Urquhart's representation in the spirit of natural duality theory by replacing, on the dual side, Urquhart's two quasiorders with a digraph relation generalising Priestley's order relation. In this paper we translate, following the spirit of natural duality theory, Urquhart's L-spaces into newly introduced \emph{Plo\v{s}\v{c}ica spaces}. We then prove that every Plo\v{s}\v{c}ica space is the dual space of some general lattice. Based on the authors' 2022 characterisation of finite join and meet semidistributive lattices via their dual digraphs, we initiate a study of general (possibly infinite) join and meet semidistributive lattices via their dual digraphs. We illustrate our results on examples and formulate three open problems.
\end{abstract}

\dedicatory{Dedicated to Hilary Priestley and her work in duality theory.}

\maketitle

%%%%%%%%%%%%%%%%%%%%%%%%%%%%%%%%%%%%%%%%%%%%%%%%%%%%%%%%%%%%%%%%%%%%%%
%% MAIN MATTER
%%%%%%%%%%%%%%%%%%%%%%%%%%%%%%%%%%%%%%%%%%%%%%%%%%%%%%%%%%%%%%%%%%%%%%

\section{Introduction}\label{sec:intro}

In 1937 Birkhoff~\cite{B1937} showed that every finite distributive lattice $\Lalg$ can be represented as the lattice of all downsets of the 
poset  $(J(\Lalg), \leqslant)$  of join-irreducible elements of $\Lalg$, with the ordering inherited from $\Lalg$. It is easy to show that also every finite poset $(P, \le)$ can be represented as 
the poset of join-irreducible elements of the distributive lattice of all downsets of $(P, \le)$. The Birkhoff dual representation of finite distributive lattices via finite posets was extended to a dual representation of arbitrary distributive lattices (with 
least element $0$ and greatest element $1$) by Priestley in 1970~\cite{Pr70}.

For general lattices (with bounds $0$ and $1$) their first well-known dual representation was presented by Urquhart in 1978~\cite{U78}. After that, a number of authors have attempted to provide various different dual representations of general lattices; for a summary of these representations we refer to a survey paper by the first author~\cite{C22} and also the recent preprint~\cite{BCM25}. In the present paper we will rely on a representation of general lattices (with bounds) by Plo\v{s}\v{c}ica, who in 1995~\cite{Plos95} presented Urquhart's representation of lattices in the spirit of natural duality theory in the sense of Davey and Werner~\cite{DW83}, and Clark and Davey~\cite{CD98}.

In Urquhart's representation of a general lattice $\Lalg$ the elements of the dual space are maximal disjoint filter-ideal pairs (briefly \emph{MDFIP}s) of the lattice $\Lalg$. Urquhart considered two quasi-orders $\le_1$ and $\le_2$ on the set $X_{\Lalg}$ of MDFIPs and presented the dual of the lattice $\Lalg$ as a certain doubly quasi-ordered space $\mathcal{S}(\Lalg)=(X_\Lalg,\le_1,\le_2,\T_\Lalg)$ with a compact topology $\T_\Lalg$, called an \emph{L-space}. 
He introduced an abstract L-space $\mathbb{S}=(X,\le_1,\le_2,\T)$ and the concept of \emph{doubly closed stable sets} of $\mathbb{S}$. He proved that every lattice $\Lalg$ (with bounds) is isomorphic to the lattice $\mathcal{L}(\mathcal{S}(\Lalg))$ of doubly closed stable sets of the dual L-space 
$\mathcal{S}(\Lalg)$~\cite[Theorem 1]{U78}. He also showed that conversely, every L-space $\mathbb{S}$ is isomorphic to the dual space of the lattice $\mathcal{L}(\mathbb{S})$~\cite[Theorem 2]{U78}. Urquhart's dual representation of general lattices has not been used much in practice for representing lattices: the reason might be that his dual of a lattice is a somewhat complicated structure of a doubly quasi-ordered space and the concept of doubly closed stable sets is not easy to work with.

In Plo\v{s}\v{c}ica's representation~\cite{Plos95}, the dual space $\mathcal{D}(\Lalg)=(P_{\Lalg},E,\T_\Lalg)$ of a lattice $\Lalg$ is given by the set $P_{\Lalg}$ of maximal partial homomorphisms (briefly \emph{MPH}s) from $\Lalg$ into the two-element lattice $\twoB$, which correspond to Urquhart's MDFIPs of $\Lalg$. In case the lattice $\Lalg$ is distributive, these MPHs become total homomorphisms from $\Lalg$ into $\twoB$ and they form the Priestley dual of $\Lalg$~\cite{Pr70,Pr72}. As we emphasised already in~\cite{P1}, the close relationship between Plo\v{s}\v{c}ica's representation of general lattices and Priestley's representation of distributive lattices lies in the single binary relation $E$, which Plo\v{s}\v{c}ica considered on his dual space: when $\Lalg$ is distributive, the relation $E$ becomes the Priestley order on the dual space.

Plo\v{s}\v{c}ica's dual space of a general lattice $\Lalg$ is therefore a digraph where the vertices are the MPHs from $\Lalg$ into $\twoB$. The binary relation $E$, which mimics Priestley's order, forms the edge set of the digraph. These duals were presented and studied as TiRS digraphs in two papers by Craig, Gouveia and Haviar~\cite{P3,P4}.

Urquhart's representation of a general lattice $\Lalg$ (with bounds) as the lattice $\mathcal{L}(\mathcal{S}(\Lalg))$ was translated by Plo\v{s}\v{c}ica into his setting using his dual space $\mathcal{D}(\Lalg)=(P_{\Lalg},E,\T_\Lalg)$; we refer to~\cite[Theorem 1.7]{Plos95}. He showed that $\Lalg$ is isomorphic to $\mathcal{L}(\mathcal{D}(\Lalg))$, where the symbol $\mathcal{L}$ now denotes the natural evaluation maps on the dual $\mathcal{D}(\Lalg)$ (see Theorem~\ref{thm:rep-for-L} in Section~\ref{sec:ploscicaspaces}). Yet Plo\v{s}\v{c}ica did not present the equivalent description of Urquhart's abstract L-spaces $\mathbb{S}=(X,\le_1,\le_2,\T)$ in his setting.  Moreover, Plo\v{s}\v{c}ica did not translate into his setting the result of Urquhart saying that each L-space $\mathbb{S}=(X,\le_1,\le_2,\T)$ is isomorphic to the dual space of the lattice $\mathcal{L}(\mathbb{S})$. We complete both these unfulfilled tasks of Plo\v{s}\v{c}ica in Section~\ref{sec:ploscicaspaces} of this paper. We firstly provide an equivalent description of Urquhart's abstract L-spaces in Plo\v{s}\v{c}ica's setting and we call these objects \emph{Plo\v{s}\v{c}ica spaces} (see Definition~\ref{def:ploscicaspaces}). These are TiRS digraphs  $\mathbb{P}=(X,E,\T)$ with an edge set $E$ and a compact topology~$\T$. Then we prove that every Plo\v{s}\v{c}ica space $\mathbb{P}=(X,E,\T)$ is isomorphic to the space $\mathcal{D}(\mathcal{L}(\mathbb{P}))= (P_{\mathcal{L}(\mathbb{P})},E, 
\T_{\mathcal{L}(\mathbb{P})})$ dual to the lattice $\mathcal{L}(\mathbb{P})$. Here the lattice $\mathcal{L}(\mathbb{P})$ dual to $\mathbb{P}$ is formed, instead of using Urquhart's doubly closed stable sets, by using the equivalent concept of Plo\v{s}\v{c}ica's maximal partial morphisms (briefly \emph{MPM}s) from $\mathbb{P}$ into the two-element digraph with the discrete topology.

In Section~\ref{sec:dualspacesSDL} of this paper we apply the combined approach of~\cite{P5} by using Urquhart's MDFIPs for the elements of the dual of a general lattice $\Lalg$ and we study such a dual of $\Lalg$ as a TiRS digraph $(X_\Lalg,E)$ using the Plo\v{s}\v{c}ica binary relation $E$ on the vertices. We recall that in~\cite{P5} we characterized the dual digraphs of finite join and meet semidistributive lattices (the topology plays no role in the finite case). Our results relied on a characterization of finite join and meet semidistributive lattices by Adaricheva and Nation~\cite[Theorem 3-1.4]{AN-Ch3}. Yet this characterization cannot be generalized to the infinite case (cf.~\cite[Theorem 3-1.27]{AN-Ch3}).

Section~\ref{sec:dualspacesSDL} aims to identify a possible characterisation of join and meet semidistributive lattices by their dual digraphs. This is a challenging task and it requires a different method to that used in~\cite{P5}.

We are able to show that if the dual digraph $(X_\Lalg,E)$ of a general lattice $\Lalg$ (with bounds) contains no two distinct MDFIPs with the same ideal, then $\Lalg$ is join semidistributive. Hence we give a natural sufficient condition for join semidistributivity of a general lattice $\Lalg$. Dually, 
one can then obtain that a general lattice $\Lalg$ (with bounds) is meet semidistributive provided its dual digraph $(X_\Lalg,E)$ contains no two different MDFIPs with the same filter.

The main advantage of Plo\v{s}\v{c}ica's dual representation is in our opinion the use of the binary relation $E$ on the first duals of lattices, which therefore can be studied as digraphs, and the use of the MPMs as elements of the second duals of lattices instead of the doubly closed stable sets in Urquhart's representation. We see as much less important for any future user of the Plo\v{s}\v{c}ica representation of general lattices (with bounds), as we present it here, whether for the elements of the first dual of a lattice $\Lalg$, the MPHs from $\Lalg$ to $\twoB$ or their corresponding MDFIPs of $\Lalg$ are used. Both approaches might be equally employed although in certain situations one of them can be seen as more natural than the other. In Section~\ref{sec:ploscicaspaces} we prefer  Plo\v{s}\v{c}ica's MPHs from $\Lalg$ to $\twoB$ as the elements of the first dual of $\Lalg$ since they well interact with evaluation maps used often in this section while translating Urquhart's dual representation into Plo\v{s}\v{c}ica's setting. On the other hand, in Section~\ref{sec:dualspacesSDL} we employ the corresponding MDFIPs of $\Lalg$ when studying the results of~\cite{P5} for general infinite
lattices (with bounds) since MDFIPs were already naturally used in~\cite{P5}.

At the end of the paper we present three open problems that could attract the attention of those interested in future research in this topic.

\section{Preliminaries}
\label{sec:prelim}

Here we lay out the necessary preliminary definitions and results that we will need later on.

A \emph{partial homomorphism} from a lattice $\Lalg = (L,\wedge,\vee,0,1)$ into the two-element lattice $\twoB=  
(\{0,1\},\wedge,\vee,0,1)$ is a partial map $f : L \to \{0,1\}$ such that $\dom f$ is a bounded sublattice of $\Lalg$ and the restriction $f\!\! \upharpoonright_{\dom f}$ is a lattice homomorphism preserving the bounds. Then a \emph{maximal partial homomorphism} (MPH) is a partial homomorphism with no proper extension. By $\mph{\Lalg}{\twoB}$ we denote the set of all MPHs from $\Lalg$ into~$\twoB$.

\begin{df}[{\cite[Section 3]{U78}}]\label{def:DFIP}
Let $\Lalg$ be a lattice. Then $\langle F, I \rangle$ is a \emph{disjoint filter-ideal pair} of $\Lalg$ if $F$ is a filter of $\Lalg$ and $I$ is an ideal of $\Lalg$ such that $F \cap I = \varnothing$. We say that a disjoint filter-ideal pair $\langle F, I \rangle$ is maximal if there is no disjoint filter-ideal pair $\langle G, J\rangle \neq \langle F, I \rangle$ such that $F \subseteq G$ and $I \subseteq J$. 
\end{df}

It is well-known that for a lattice $\Lalg = (L,\wedge,\vee,0,1)$ (with bounds) there is a one-to-one correspondence between the set of MPHs from $\Lalg$ to $\twoB$ and the MDFIPs of $\Lalg$. (See e.g. \cite[p.~76]{Plos95}.) Indeed, for an MPH $f$ from $\Lalg$ to $\twoB$, $\langle f^{-1}(1),f^{-1}(0) \rangle$ is an MDFIP of $\Lalg$. Conversely, for any MDFIP $\langle F,I \rangle$ of $\Lalg$, the partial function $f$ from $\Lalg$ to $\twoB$ given by $f^{-1}(1)=F$ and $f^{-1}(0)=I$ is an MPH.

We recall (see~\cite{Plos95}) that Plo\v{s}\v{c}ica's binary relation on the set $P_{\Lalg}$ of MPHs from $\Lalg$ to $\twoB$, which are used in Section~\ref{sec:ploscicaspaces}, is defined as follows: for any  MPHs  $f, g$ from $\Lalg$ to $\twoB$,

\begin{itemize}
\item[(E1)] $fEg \quad
\iff \quad (\forall x \in \dom f \cap \dom g)(f(x) \le g(x))$.
\end{itemize}

The base set of our dual space to $\Lalg$ will in Section~\ref{sec:dualspacesSDL} 
be the set $X_{\Lalg}$ of all MDFIPs of $\Lalg$. For two MDFIPs $\langle F, I \rangle$ and $\langle G, J \rangle$, Plo\v{s}\v{c}ica's relation $E$ is determined on the set $X_{\Lalg}$ as follows (cf.~\cite[p.~373]{P5}): 

\begin{itemize}
\item[(E2)] $\langle F, I \rangle E \langle G, J \rangle \quad
\iff \quad F \cap J = \emptyset$.
\end{itemize}

\begin{figure}[ht]
\centering
\begin{tikzpicture}[scale=0.7]

%M3
\begin{scope}[]
\node[unshaded] (bot) at (0,0) {};
\node[unshaded] (a) at (-1.5,1.5) {};
\node[unshaded] (b) at (0,1.5) {};
\node[unshaded] (c) at (1.5,1.5) {};
\node[unshaded] (top) at (0,3) {};
% Order
\draw[order] (bot)--(a)--(top)--(b)--(bot)--(c)--(top);
\node[label,anchor=east,xshift=1pt] at (a) {$a$};
\node[label,anchor=east,xshift=1pt] at (b) {$b$};
\node[label,anchor=east,xshift=1pt] at (c) {$c$};
\node[label,anchor=north] at (bot) {$0$};
\node[label,anchor=south] at (top) {$1$};
\node[label,anchor=north,yshift=-10pt, xshift=1.75cm] at (bot){$\mathbf{M}_3$ and $(X_{\mathbf{M}_3}, E)$};
\end{scope}

%M3Graph
\begin{scope}[xshift=4.25cm]
\node[] (ca) at (-0.75,0) {$ca$};
\node[] (ba) at (0.75,0) {$ba$};
\node[] (cb) at (-1.5,1.25) {$cb$};
\node[] (bc) at (1.5,1.25) {$bc$};
\node[] (ab) at (-0.75,2.5) {$ab$};
\node[] (ac) at (0.75,2.5) {$ac$};

\path[thick,<->] (ab) edge (ac);
\path[thick,<->] (ac) edge (bc);
\path[thick,<->] (bc) edge (ba);
\path[thick,<->] (ba) edge (ca);
\path[thick,<->] (ca) edge (cb);
\path[thick,<->] (cb) edge (ab);

\path[thick,->] (ab.south east) [out=300,in=155] edge  (bc.west);
\path[thick,->] (bc) [out=195,in=45] edge (ca.north east);
\path[thick,->,shorten >=5pt,shorten <=4pt] (ca.north) edge (ab.south);

\path[thick,->,] (ac.south west) [out=240,in=45] edge  (cb.east);
\path[thick,->] (cb) [out=330,in=135] edge  (ba.north west);
\path[thick,->,shorten >=5pt,shorten <=4pt] (ba.north) edge  (ac.south);

\end{scope}

%L2
\begin{scope}[yshift=-6cm, xshift=1cm]
\node[unshaded] (bot) at (0,0) {};
\node[unshaded] (a) at (-1,2) {};
\node[unshaded] (b) at (0,2) {};
\node[unshaded] (c) at (1,2) {};
\node[unshaded] (d) at (-1,1) {};
\node[unshaded] (e) at (1,1) {};
\node[unshaded] (top) at (0,3) {};
% Order
\draw[order] (top)--(a)--(d)--(bot);
\draw[order] (b)--(e);
\draw[order] (d)--(b)--(top)--(c)--(e)--(bot);
\node[label,anchor=east,xshift=1pt] at (a) {$a$};
\node[label,anchor=north] at (b) {$b$};
\node[label,anchor=west,xshift=-2pt] at (c) {$c$};
\node[label,anchor=east,xshift=1pt] at (d) {$d$};
\node[label,anchor=west,xshift=-2pt] at (e) {$e$};

\node[label,anchor=north, yshift=-10pt, xshift=1.25cm] at (bot)
{$\mathbf{L}_2$ and $(X_{\mathbf{L}_2}, E)$}; 

\node[label,anchor=north] at (bot) {$0$};
\node[label,anchor=south] at (top) {$1$};
\end{scope}
%L2 graph
\begin{scope}[xshift=4.25cm, yshift = -6cm]
\node[] (dc) at (0,0) {$dc$};
\node[] (ab) at (0,1.25) {$ab$};
\node[] (cb) at (0,2.5) {$cb$};
\node[] (ea) at (0,3.75) {$ea$};
\path[thick,<-] (dc.north) edge  (ab.south);
\path[thick,<->] (ab.north) edge  (cb.south);
\path[thick,->] (cb.north) edge (ea.south);
\end{scope}

%L1 
\begin{scope}[xshift = 9cm]%[yshift=-0.5cm]
\node[unshaded] (bot) at (0,0) {};
\node[unshaded] (a) at (-1,2) {};
\node[unshaded] (b) at (1,2) {};
\node[unshaded] (c) at (-1,1) {};
\node[unshaded] (d) at (0,1) {};
\node[unshaded] (e) at (1,1) {};
\node[unshaded] (top) at (0,3) {};
% Order
\draw[order] (bot)--(c)--(a)--(d)--(b)--(top)--(a);
\draw[order] (d)--(bot)--(e)--(b);
\node[label,anchor=north, yshift=-10pt, xshift=1.25cm] at (bot)
{$\mathbf{L}_1$ and $(X_{\mathbf{L}_1}, E)$}; 

\node[label,anchor=north] at (bot) {$0$};
\node[label,anchor=east] at (a) {$a$};
\node[label,anchor=west,xshift=-2pt] at (b) {$b$};
\node[label,anchor=east] at (c) {$c$};
\node[label,anchor=west,xshift=-1pt] at (d) {$d$};
\node[label,anchor=west,xshift=-2pt] at (e) {$e$};
\node[label,anchor=south] at (top) {$1$};
\end{scope}

%L1 Dual Graph
\begin{scope}[xshift=12.5cm]
\node[] (ea) at (0,0) {$ea$};
\node[] (dc) at (0,1.25) {$dc$};
\node[] (de) at (0,2.5) {$de$};
\node[] (cb) at (0,3.75) {$cb$};
\path[thick,->] (ea.north) edge  (dc.south);
\path[thick,<->] (dc.north) edge  (de.south);
\path[thick,->] (cb.south) edge  (de.north);
\end{scope}

%L5
\begin{scope}[yshift=-11cm, xshift=8cm]
\node[unshaded] (bot) at (0,0) {};
\node[unshaded] (a) at (-1,2) {};
\node[unshaded] (b) at (0,2) {};
\node[unshaded] (c) at (1,2) {};
\node[unshaded] (d) at (-0.5,1) {};
\node[unshaded] (top) at (0,3) {}; 

\draw[order] (bot)--(d)--(a)--(top)--(b)--(d)--(bot)--(c)--(top);
\node[label,anchor=east,xshift=1pt] at (a) {$a$};
\node[label,anchor=east,xshift=1pt] at (b) {$b$};
\node[label,anchor=east,xshift=1pt] at (c) {$c$};
\node[label,anchor=east,xshift=1pt] at (d) {$d$};
\node[label,anchor=north] at (bot) {$0$};
\node[label,anchor=south] at (top) {$1$};
\node[label,anchor=north,yshift=-10pt, xshift=1.75cm] at (bot)
{$\mathbf{L}_5$ and $(X_{\mathbf{L}_5}, E)$}; 

\end{scope}

%L5 graph
\begin{scope}[yshift=-11cm, xshift=12.5cm]
\node[] (ca) at (-0.75,0) {$ca$};
\node[] (cb) at (0.75,0) {$cb$};
\node[] (ba) at (-1.25,1.25) {$ba$};
\node[] (ab) at (1.25,1.25) {$ab$};
\node[] (dc) at (0,2.5) {$dc$};

\path[thick,->] (ba) edge  (dc);
\path[thick,->] (ab) edge  (dc);
\path[thick,->] (ca) edge  (ab);
\path[thick,->] (cb) edge  (ba);
\path[thick,<->] (ca) edge  (cb);
\path[thick,<->] (ca) edge  (ba);
\path[thick,<->] (cb) edge  (ab);
\end{scope}

%L4 
\begin{scope}[yshift=-11cm]
\node[unshaded] (bot) at (0,0) {};
\node[unshaded] (a) at (1,1) {};
\node[unshaded] (b) at (0,1) {};
\node[unshaded] (c) at (-1,1) {};
\node[unshaded] (d) at (0.5,2) {};
\node[unshaded] (top) at (0,3) {};

\draw[order] (bot)--(b)--(d)--(top)--(d)--(a)--(bot)--(c)--(top);
\node[label,anchor=west,xshift=1pt] at (a) {$a$};
\node[label,anchor=west,xshift=1pt] at (b) {$b$};
\node[label,anchor=west,xshift=1pt] at (c) {$c$};
\node[label,anchor=west,xshift=1pt] at (d) {$d$};
\node[label,anchor=north] at (bot) {$0$};
\node[label,anchor=south] at (top) {$1$};
\node[label,anchor=north,yshift=-10pt, xshift=1.75cm] at (bot)
{$\mathbf{L}_4$ and $(X_{\mathbf{L}_4}, E)$}; 

\end{scope}
%L4 graph
\begin{scope}[yshift=-11cm, xshift=4.5cm]
\node[] (ac) at (-0.75,0) {$ac$};
\node[] (bc) at (0.75,0) {$bc$};
\node[] (ab) at (-1.25,1.25) {$ab$};
\node[] (ba) at (1.25,1.25) {$ba$};
\node[] (cd) at (0,2.5) {$cd$};

\path[thick,<-] (ba) edge  (cd);
\path[thick,<-] (ab) edge  (cd);
\path[thick,<-] (ac) edge  (ba);
\path[thick,<-] (bc) edge  (ab);
\path[thick,<->] (ac) edge  (bc);
\path[thick,<->] (ac) edge  (ab);
\path[thick,<->] (bc) edge  (ba);
\end{scope}

%L3
\begin{scope}[yshift=-6cm, xshift=8cm]
\node[unshaded] (bot) at (0,0) {};
\node[unshaded] (e) at (-0.75,0.75) {};
\node[unshaded] (b) at (-1.5,1.5) {};
\node[unshaded] (c) at (0,1.5) {};
\node[unshaded] (d) at (1.5,1.5) {};
\node[unshaded] (a) at (0.75,2.25) {};
\node[unshaded] (top) at (0,3) {};
% Order
\draw[order] (bot)--(e)--(b)--(top)--(a)--(c)--(e)--(bot)--(d)--(a)--(top);
\node[label,anchor=west,xshift=1pt] at (a) {$a$};
\node[label,anchor=west,xshift=1pt] at (b) {$b$};
\node[label,anchor=west,xshift=1pt] at (c) {$c$};
\node[label,anchor=west,xshift=1pt] at (e) {$e$};
\node[label,anchor=west,xshift=1pt] at (d) {$d$};
\node[label,anchor=north] at (bot) {$0$};
\node[label,anchor=south] at (top) {$1$};
\node[label,anchor=north,yshift=-10pt, xshift=1.75cm] at (bot)
{$\mathbf{L}_3$ and $(X_{\mathbf{L}_3}, E)$};

\end{scope}

\begin{scope}[yshift=-6cm, xshift=12.5cm]
\node[] (ba) at (-0.75,0) {$ba$};
\node[] (ed) at (0.75,0) {$ed$};
\node[] (dc) at (-1.25,1.25) {$dc$};
\node[] (cb) at (1.25,1.25) {$cb$};
\node[] (db) at (0,2.5) {$db$};

\path[thick,->] (ba) edge  (ed);
\path[thick,->] (cb) edge  (ed);
\path[thick,->] (ba) edge  (dc);
\path[thick,->] (dc) edge  (cb);
\path[thick,<->] (dc) edge  (db);
\path[thick,<->] (db) edge  (cb);
\end{scope}

\end{tikzpicture}
\caption{Examples of finite (non-distributive) lattices and their dual digraphs. For clarity, we omit the loops whose existence follows from the reflexivity of $E$.}\label{fig:dual-examples}
\end{figure}

In case the lattice $\Lalg$ is finite, every filter is the up-set of a unique element and every ideal is the down-set of a unique element. Hence in a finite lattice $\Lalg$ we can represent every disjoint filter-ideal pair $\langle F, I \rangle$ by an ordered pair $\langle {\uparrow}x, {\downarrow}y\rangle$ where $x = \bigwedge F$ and $y= \bigvee I$. Thus for finite lattices we have $\MDFIP{x}{y} E \MDFIP{a}{b}$ if and only if $x \nleqslant b$. Examples of finite (non-distributive) lattices and their dual digraphs are presented in Figure~\ref{fig:dual-examples}. We denote by $xy$ the MDFIP $\MDFIP{x}{y}$ to make the labelling more compact. We remark that the directed edge set is not a transitive relation.

The properties of the digraphs dual to general lattices (with bounds) were described by Craig, Gouveia and Haviar~\cite{P3}. They were called \emph{TiRS graphs} there, yet in this paper (like in~\cite{P5} and~\cite{P7}) we prefer to use the terminology \emph{TiRS digraphs}. We recall that in the definition below $xE = \{\, y \in V \mid (x,y) \in E \,\}$ and $Ex = \{\, y \in V \mid (y,x) \in E  \,\}$. We also remark that the name \emph{TiRS} comes from combining the conditions (Ti), (R) and (S) below.

\begin{df}[{\cite[Definition 2.2]{P3}}]\label{def:TiRS} A TiRS digraph $G = (V,E)$ is a set $V$ and a
reflexive relation $E \subseteq V\times V$ such that:
\begin{itemize}
\item[(S)] If $x,y \in V$ and $x\neq y$ then $xE\neq yE$ or $Ex \neq Ey$.
\item[(R)] For all $x,y \in V$, if $xE \subset yE$ then $(x,y)\notin E$, and if $Ex \subset Ey$ then $(y,x)\notin E$.
\item[(Ti)] For all $x,y \in V$, if 
$(x,y)\in E$ then there exists $z \in V$ such that $zE \subseteq xE$ and $Ez \subseteq Ey$.
\end{itemize}
\end{df}

By \cite[Proposition 2.3]{P3}, for any general lattice $\Lalg$ (possibly infinite, with bounds), the dual digraphs $(P_\Lalg,E)$ and $(X_\Lalg,E)$ are TiRS digraphs.

Below is the first new result of this paper. It will be needed in Section~\ref{sec:ploscicaspaces}.

\begin{lem}\label{lem:antisym}
Let $G=(V,E)$ be a 
TiRS digraph with $x,y \in V$. If $xE \subseteq yE$ and $Ex \subseteq Ey$, then $x=y$. 
\end{lem}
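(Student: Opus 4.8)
The plan is to prove the contrapositive-style refinement: assuming $xE \subseteq yE$ and $Ex \subseteq Ey$, I want to conclude $x = y$, and the natural tool is condition (S), which says that two distinct vertices cannot agree on both their out-neighbourhoods and their in-neighbourhoods. So it suffices to promote the two inclusions to equalities, i.e. to show $xE = yE$ and $Ex = Ey$; then (S) forces $x = y$.

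First I would observe that $E$ is reflexive, so $x \in xE$ and $x \in Ex$, and likewise $y \in yE$, $y \in Ey$. From $x \in Ex \subseteq Ey$ we get $(x,y) \in E$, i.e. $x \mathrel{E} y$. Now I apply condition (R): if we had $xE \subsetneq yE$, then (R) would give $(x,y) \notin E$, contradicting $x \mathrel{E} y$. Since $xE \subseteq yE$ by hypothesis, the only remaining possibility is $xE = yE$. Symmetrically, from $y \in yE \subseteq$ (we need $yE \subseteq xE$ now — wait, we only have $xE\subseteq yE$) — let me instead use the second half of (R): from $x \mathrel{E} y$, i.e. $(x,y) \in E$, the second clause of (R) says that $Ex \subsetneq Ey$ would force $(y,x) \notin E$; that is not yet a contradiction. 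So for the in-neighbourhoods I argue dually using the edge in the other direction: actually the clean move is to note $y \in yE$, and since $xE \subseteq yE$ we cannot immediately place $y$ in $xE$. Instead, here is the symmetric argument: we have shown $xE = yE$; combined with $Ex \subseteq Ey$, I now want $Ex = Ey$. Suppose $Ex \subsetneq Ey$. Pick $z \in Ey \setminus Ex$, so $(z,y) \in E$ but $(z,x) \notin E$. Hmm — this does not obviously contradict anything directly, so the better route is to re-run the first paragraph's argument with the roles adjusted using (Ti) or to exploit $y \mathrel{E} ?$.

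The cleaner symmetric step: once $xE = yE$ is established, note that $y \in yE = xE$ means $(x,y)\in E$ (already known) — that gives nothing new for in-sets. So instead I would directly exploit that we also get $(y,x)$: from $y \in Ey$ and $Ex\subseteq Ey$ we get only $y\in Ey$, not $y \in Ex$. Therefore the honest symmetric argument uses (R)'s first clause again but for the pair $(y,x)$: we do know $y \in yE = xE$? No. The truly symmetric hypothesis to "$x\in Ex\subseteq Ey$ gives $x\mathrel E y$" is "$y \in yE$, and $yE \subseteq xE$ gives $y \mathrel E x$" — but we only assumed $xE\subseteq yE$, not the reverse. This asymmetry is exactly why the lemma is not a one-line triviality, and I expect it to be the main obstacle. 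The resolution: having proved $xE = yE$, I now have the reverse inclusion $yE \subseteq xE$ for free, hence $y \in yE = xE$ gives $(x,y)\in E$ (old news), but also applying reflexivity $y \in Ey$ and using the equality $xE = yE$ does not touch in-sets. So I instead prove $Ex = Ey$ by the order-dual of the whole first argument applied to the digraph $G^{\partial} = (V, E^{-1})$, which is again TiRS (out-sets and in-sets swap, and conditions (S), (R), (Ti) are self-dual under $E \mapsto E^{-1}$), with the hypotheses $Ex \subseteq Ey$ playing the role of the out-set inclusion; this yields $Ex = Ey$. Finally, with $xE = yE$ and $Ex = Ey$ in hand, condition (S) (in its contrapositive form) gives $x = y$, completing the proof.
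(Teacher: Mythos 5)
Your final argument is correct and is essentially the paper's proof: reflexivity plus $Ex\subseteq Ey$ yields $(x,y)\in E$, which via (R) rules out $xE\subset yE$; the symmetric step (which you package as passing to the opposite digraph $(V,E^{-1})$, but which the paper just writes out directly using the second clause of (R) and $x\in xE\subseteq yE$) rules out $Ex\subset Ey$; and (S) then forces $x=y$. The lengthy mid-proof detours are harmless, but the clean write-up should drop them and simply give the two (R)-plus-reflexivity contradictions.
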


\begin{proof}
Suppose that $xE\subseteq yE$ and $Ex \subseteq Ey$ and $x\neq y$. By (S), we must have $xE \subset yE$ or $Ex \subset Ey$. If $xE \subset yE$ then by (R) we get $(x,y)\notin E$, but with reflexivity this contradicts $Ex \subseteq Ey$. Similarly, applying (R) to $Ex \subset Ey$ would contradict $xE \subseteq yE$. 
\end{proof}

Plo\v{s}\v{c}ica endowed his dual $\mathcal{D}({\Lalg})$ of a lattice $\Lalg$ (having as a base set the set $P_{\Lalg} = \mph{\Lalg}{\twoB}$ of all MPHs from $\Lalg$ into $\twoB$) with the topology $\T_{\Lalg}$ having as a subbasis of closed sets all sets of the form
\[
V_a=\{\,f \in \mph{\Lalg}{\twoB} \mid f(a)=0 \,\} \quad \text{and} \quad
W_a=\{\,f \in \mph{\Lalg}{\twoB} \mid f(a)=1 \,\},
\]
where $a\in L$. 

One can check that 
$V_a\cap V_b = V_{a\vee b}$ 
and $W_a\cap W_b = W_{a\wedge b}$ for all $a,b\in L$. In the dual space $\mathcal{D}(\Lalg)= (P_{\Lalg}, E, \T_{\Lalg})$ of $\Lalg$ the topology $\T_{\Lalg}$ is T$_1$ and it is compact (it is the same topology as used by Urquhart~\cite[Lemma 6]{U78}). 

To recall facts concerning general digraphs $G =(X,E)$ from~\cite{Plos95}, let us now consider the two-element digraph $\twoT=(\{0,1\}, \le)$. We say that a~partial map $\varphi \colon X \to \twoT$ preserves the relation $E$ if $x, y \in \dom \varphi$ and $(x,y)\in E$ imply $\varphi(x)\leqslant \varphi(y)$. The (complete) lattice of maximal partial $E$-preserving maps from $G$ to $\twoT$ is denoted by $\mpe{G}{\twoT}$.

\begin{lem}
[cf.~{\cite[Lemma 1.3]{Plos95}}]\label{lem:Plos1.3}
Let $G=(X,E)$ be a digraph and let us consider
$\varphi \in \mpe{G}{\twoT}$. Then
\begin{enumerate}[{\upshape (i)}]
\item $\varphi^{-1}(0)=\{\, x\in X \mid \text{there is no}\ y\in\varphi^{-1}
(1)\ \text{with}\ (y,x)\in E \,\}$;
\item $\varphi^{-1}(1)=\{\, x\in X \mid
\text{there is no}\ y\in\varphi^{-1}
(0)\ \text{with}\ (x,y)\in E\,\}$.
\end{enumerate}

\end{lem}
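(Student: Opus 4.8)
The plan is to prove both parts by showing that each set is contained in the other, exploiting the maximality of $\varphi$ together with the structure of $E$-preserving maps into $\twoT$. I will prove (i) in detail; part (ii) is dual and follows by the order-reversing symmetry of the situation (swapping $0$ and $1$ and reversing $E$).

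\medskip

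For the forward inclusion in (i), suppose $x \in \varphi^{-1}(0)$. If there were some $y \in \varphi^{-1}(1)$ with $(y,x) \in E$, then since $y,x \in \dom\varphi$ and $\varphi$ preserves $E$, we would get $1 = \varphi(y) \le \varphi(x) = 0$, a contradiction. Hence no such $y$ exists, which gives the left-to-right containment. The reverse inclusion is where the maximality of $\varphi$ does the work. Suppose $x \in X$ has the property that there is no $y \in \varphi^{-1}(1)$ with $(y,x) \in E$. I must show $x \in \dom\varphi$ and $\varphi(x) = 0$. The idea is to extend $\varphi$ by setting $\varphi'(x) = 0$ (and $\varphi' = \varphi$ elsewhere) and check that $\varphi'$ is still $E$-preserving; maximality of $\varphi$ then forces $x$ to have already been in $\dom\varphi$ with value $0$. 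To verify that $\varphi'$ preserves $E$, I need to check the new constraints involving $x$: any edge $(z,x) \in E$ with $z \in \dom\varphi'$ requires $\varphi'(z) \le \varphi'(x) = 0$, i.e. $\varphi(z) = 0$ — but this holds precisely because by hypothesis $z \notin \varphi^{-1}(1)$; and any edge $(x,w) \in E$ with $w \in \dom\varphi'$ requires $\varphi'(x) = 0 \le \varphi'(w)$, which is automatic. (If $x$ was already in $\dom\varphi$, one checks directly from $E$-preservation and the hypothesis that its value must be $0$: reflexivity gives $(x,x) \in E$, and if $\varphi(x) = 1$ then $x \in \varphi^{-1}(1)$ with $(x,x) \in E$, contradicting the hypothesis.)

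\medskip

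The main subtlety to handle carefully is the case distinction on whether $x$ already lies in $\dom\varphi$: if it does, the extension argument is vacuous and one argues directly via reflexivity of $E$ as indicated above; if it does not, one genuinely produces a proper extension $\varphi'$ of $\varphi$, contradicting maximality unless the claimed description already held. In either case the conclusion $x \in \varphi^{-1}(0)$ follows. Part (ii) is obtained by the same argument with the roles of $0$ and $1$ interchanged and each edge $(a,b)$ read in the reverse direction $(b,a)$; concretely, one checks that extending $\varphi$ by $\varphi'(x) = 1$ is $E$-preserving whenever there is no $y \in \varphi^{-1}(0)$ with $(x,y) \in E$, and invokes maximality. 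I expect no serious obstacle here — the only thing to be vigilant about is bookkeeping the direction of the edges and not conflating the two one-sided neighbourhoods $xE$ and $Ex$.
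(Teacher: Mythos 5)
Your proof is correct, and in fact the paper never proves this lemma at all --- it is recalled from Plo\v{s}\v{c}ica's paper with a ``cf.''\ citation --- so there is no in-paper argument to compare against; what you give is the standard maximality-extension argument that Plo\v{s}\v{c}ica's original proof also uses. Both inclusions are handled properly: the easy direction is immediate from $E$-preservation, and for the reverse direction you correctly split on whether $x\in\dom\varphi$, producing a proper $E$-preserving extension $\varphi'$ with $\varphi'(x)=0$ in the one case and invoking the reflexive edge $(x,x)$ in the other.

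One point worth making explicit: your case $x\in\dom\varphi$, $\varphi(x)=1$ is disposed of \emph{only} via reflexivity of $E$, yet the lemma as stated assumes merely that $G$ is a digraph. This is not a defect of your argument but of the statement: without reflexivity the lemma is false (take $X=\{x\}$, $E=\varnothing$, and the total map $\varphi(x)=1$, which is maximal but has $\varphi^{-1}(0)=\varnothing$ while the right-hand side of (i) is $\{x\}$). Every digraph to which the paper applies this lemma is reflexive (TiRS digraphs are reflexive by definition, as are the dual digraphs $(P_{\Lalg},E)$ and $(X_{\Lalg},E)$, and the paper's own Lemma~\ref{lem:Ex-xE-phi} explicitly adds the reflexivity hypothesis), so your use of it is the intended reading; you were right to flag it rather than pretend the case handles itself.
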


The lemma above enables us to observe that for a digraph $G=(X,E)$ and $\varphi, \psi \in \mpe{G}{\twoT}$ we have
$$
\varphi^{-1}(1) \subseteq \psi^{-1}(1) \:\Longleftrightarrow\: \psi^{-1}(0) \subseteq \varphi^{-1}(0).
$$
It follows then that the reflexive and transitive binary relation $\leqslant$ defined on
$\mpe{G}{\twoT}$ by $\varphi\le\psi \iff \varphi^{-1}(1)\subseteq\psi^{-1}(1)$ is a partial order. 

Now we recall facts concerning general digraphs $\mathbb{P}= (X,E,\T)$ with topology from~\cite{P2}. A map $\varphi \colon (X_1,E_1,\T_1)\to (X_2,E_2,\T_2)$ between digraphs with topology is called a \emph{morphism} if it preserves the binary relation and is continuous as a map from  $(X_1,\T_1)$ to  $(X_2,\T_2)$. By a \emph{partial morphism} we mean a partial map $\varphi\colon  (X_1,E_1,\T_1)\to (X_2,E_2,\T_2)$ whose domain is a $\T_1$-closed subset of $X_1$ and the restriction of $\varphi$ to its domain is a morphism. A partial morphism is called a \emph{maximal partial morphism} (MPM), if there is no partial morphism properly extending it. For a digraph  with topology, $\mathbb{P}=(X,E,\T)$, we denote by $\mpm{\mathbb{P}}{\twoT_\T}$ the set of MPMs from $\mathbb{P}$ to the two-element digraph with the discrete topology, $\twoT_\T$. 

Plo\v s\v cica's representation theorem for general lattices (with bounds) can be presented as follows.

\begin{prop}
[{\cite[Lemmas 1.2, 1.5 and Theorem~1.7]{Plos95}}]\label{lem-eval}
Let\, $\Lalg$ be a  lattice (with bounds) and let\, $\mathcal{D}(\Lalg) = (P_{\Lalg}, E, \T_{\Lalg})$ be the dual of\, $\Lalg$.  For $a\in L$, let the evaluation map\, $e_a \colon \mathcal{D}(\Lalg) \to \twoT_\T$ be defined by
\[e_a(f)= \begin{cases}
                f(a) &\text{ $a\in \dom(f)$,}\\
                    - &\text{ undefined otherwise.}
\end{cases} \]
Then the following hold:
\begin{enumerate}[{\upshape (i)}]
\item The map\, $e_a$ is an element of $\mpm{\mathcal{D}(\Lalg)}{\twoT_\T}$ for each  $a \in L$.
\item Every\, $\varphi \in \mpm{\mathcal{D}(\Lalg)}{\twoT_\T}$ is of the form $e_a$ for some $a\in L$.
\item The map\, $e_{\Lalg}: \Lalg \to \mpm{\mathcal{D}(\Lalg)}{\twoT_\T}$ given by evaluation, $a \mapsto e_a$ {\upshape(}$a \in L${\upshape)}, is an isomorphism of\, $\Lalg$ onto the lattice\, $\mpm{\mathcal{D}(\Lalg)}{\twoT_\T}$, ordered by the relation\, $\varphi\le\psi$ if and only if\, $\varphi^{-1}(1)\subseteq\psi^{-1}(1)$.
\end{enumerate}
\end{prop}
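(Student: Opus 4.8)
The plan is to work throughout via the bijection between MPHs $f\colon\Lalg\to\twoB$ and MDFIPs $\langle f^{-1}(1),f^{-1}(0)\rangle$, so that $E$ is the relation (E2) and $W_a=\{f\mid a\in f^{-1}(1)\}$, $V_a=\{f\mid a\in f^{-1}(0)\}$; I would prove (i) directly, deduce (iii) from (i) and (ii), and prove (ii) last. For (i), first check that $e_a$ is a partial morphism: its domain is $\{f\mid a\in\dom f\}=V_a\cup W_a$, a union of two subbasic closed sets and hence $\T_{\Lalg}$-closed; it preserves $E$ immediately by (E1); and it is continuous into the discrete space $\twoT_\T$ because $e_a^{-1}(0)=V_a$ and $e_a^{-1}(1)=W_a$ are closed.

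For the maximality half of (i), suppose a partial morphism $\varphi$ properly extends $e_a$, and pick $f\in\dom\varphi$ with $a\notin\dom f$; let $\langle F,I\rangle$ be the MDFIP of $f$, so $a\notin F\cup I$. Since $F$ is an up-set missing $a$, we have $F\cap{\downarrow}a=\varnothing$, so by Zorn's Lemma the disjoint pair $\langle F,{\downarrow}a\rangle$ extends to an MDFIP $\langle G,J\rangle$; its MPH $g$ satisfies $g(a)=0$ (as $a\in J$) and $fEg$ (as $F\subseteq G$ and $G\cap J=\varnothing$ give $F\cap J=\varnothing$, which is (E2)). Symmetrically $I$ is a down-set missing $a$, so $I\cap{\uparrow}a=\varnothing$ and $\langle{\uparrow}a,I\rangle$ extends to an MDFIP whose MPH $h$ satisfies $h(a)=1$ and $hEf$. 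Now $g,h\in\dom e_a\subseteq\dom\varphi$, so $\varphi(g)=0$ and $\varphi(h)=1$; whichever value $\varphi(f)\in\{0,1\}$ takes, preservation of $E$ along $fEg$ (if $\varphi(f)=1$) or along $hEf$ (if $\varphi(f)=0$) forces $1\le 0$, a contradiction. Hence $e_a$ is an MPM.

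Granting (i) and (ii), (iii) follows quickly. By (i) the assignment $a\mapsto e_a$ takes values in $\mpm{\mathcal{D}(\Lalg)}{\twoT_\T}$, and by (ii) it is onto. It preserves and reflects the order: if $a\le b$ then $W_a\subseteq W_b$, since $a\in f^{-1}(1)$ and $f^{-1}(1)$ an up-set force $b\in f^{-1}(1)$; and if $a\nleqslant b$ then ${\uparrow}a\cap{\downarrow}b=\varnothing$, so some MDFIP $\langle F,I\rangle$ has $a\in F$ and $b\in I$, whence its MPH $f$ lies in $W_a$ but not in $W_b$ (as $f\in V_b$) and so $e_a^{-1}(1)\nsubseteq e_b^{-1}(1)$. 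Thus $e_{\Lalg}$ is an order-isomorphism of $\Lalg$ onto $\mpm{\mathcal{D}(\Lalg)}{\twoT_\T}$ ordered by $\varphi\le\psi\iff\varphi^{-1}(1)\subseteq\psi^{-1}(1)$, and since an order-isomorphism between lattices is automatically a lattice isomorphism, (iii) is proved.

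The real work, and the step I expect to be the main obstacle, is (ii). Given $\varphi\in\mpm{\mathcal{D}(\Lalg)}{\twoT_\T}$, put $P=\varphi^{-1}(1)$ and $Q=\varphi^{-1}(0)$, both closed. The key separation fact is that for $f\in P$ and $g\in Q$ one cannot have $fEg$ (otherwise $1=\varphi(f)\le\varphi(g)=0$), so by (E2) one may choose $a_{f,g}\in f^{-1}(1)\cap g^{-1}(0)$, giving $f\in W_{a_{f,g}}$ and $g\in V_{a_{f,g}}$. The heart of the matter is to pass from these pointwise separations to a \emph{single} element $a\in L$ with $P\subseteq W_a$ and $Q\subseteq V_a$; this uses compactness of $\T_{\Lalg}$ together with the identities $V_a\cup V_b=V_{a\vee b}$ and $W_a\cap W_b=W_{a\wedge b}$ and the closure of each $f^{-1}(1)$ under finite meets and each $g^{-1}(0)$ under finite joins. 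It is delicate precisely because $V_a$ and $W_a$ form a subbasis of \emph{closed}, not open, sets, so compactness cannot be applied to a cover of $P$ or $Q$ by them directly and must be routed through complements (the finite-intersection-property form), carefully tracking which finite combinations of the $a_{f,g}$ close up under $\vee$ and $\wedge$; in effect this is the argument of \cite[Lemma~1.5]{Plos95}, going back to Urquhart's identification of the second dual of a lattice with the lattice itself \cite[Theorem~1]{U78}. Once such an $a$ is produced, (i) shows that $e_a$ is an MPM, $\dom\varphi=P\cup Q\subseteq W_a\cup V_a=\dom e_a$, and $e_a$ agrees with $\varphi$ on $P$ (value $1$) and on $Q$ (value $0$); hence $e_a$ extends $\varphi$, and maximality of $\varphi$ gives $\varphi=e_a$.
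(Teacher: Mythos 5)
First, a point of reference: the paper does not prove Proposition~\ref{lem-eval} at all --- it is recalled from Plo\v{s}\v{c}ica \cite{Plos95} with a citation, so there is no in-paper argument to compare against. Your proofs of parts (i) and (iii) are correct and complete: the domain $V_a\cup W_a$ of $e_a$ is closed, $E$-preservation is immediate from (E1), and your maximality argument via the two MDFIPs extending $\langle F,{\downarrow}a\rangle$ and $\langle{\uparrow}a,I\rangle$ is sound; your derivation of (iii) from (i) and (ii) is essentially the argument the paper itself gives for Theorem~\ref{thm:rep-for-L}.

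The genuine gap is part (ii), which you rightly identify as the heart of the matter but do not prove: the passage from the pointwise separators $a_{f,g}\in f^{-1}(1)\cap g^{-1}(0)$ to a single $a$ with $\varphi^{-1}(1)\subseteq W_a$ and $\varphi^{-1}(0)\subseteq V_a$ is precisely the content of \cite[Lemma~1.5]{Plos95}, so writing ``in effect this is the argument of Lemma~1.5'' amounts to citing the statement being proved. Moreover, the compactness route you sketch does not close as described. The only open cover naturally available is $\varphi^{-1}(1)\subseteq\bigcup\{\,X{\setminus}V_b\mid b\in g^{-1}(0)\,\}$, and a finite subcover together with the identity $V_{b_1}\cap\dots\cap V_{b_n}=V_{b_1\vee\dots\vee b_n}$ (note that the correct identity is $V_a\cap V_b=V_{a\vee b}$, not $V_a\cup V_b=V_{a\vee b}$ as you, following a slip in the paper, list among your ingredients) yields only $\varphi^{-1}(1)\cap V_b=\varnothing$ for some $b\in g^{-1}(0)$, i.e.\ $b\notin f^{-1}(0)$ for all $f\in\varphi^{-1}(1)$. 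Because the maps are \emph{partial}, $X{\setminus}V_b$ is strictly larger than $W_b$, so this is genuinely weaker than the required $\varphi^{-1}(1)\subseteq W_b$; the dual attempt starting from $\varphi^{-1}(0)$ fails symmetrically. Closing the gap needs a further, essential use of the maximality of $\varphi$ (together with the $T_1$ property of $\T_\Lalg$, so that single points may be adjoined to closed domains), in the spirit of Lemma~\ref{lem:Plos1.3} or of the ultrafilter argument the paper uses for the mirror-image surjectivity statement in the proof of Theorem~\ref{thm:graph-homeomorphic}(iii). As written, part (ii) is an outline whose one nontrivial step is deferred to the very result under proof.
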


Now we recall some facts from~\cite{P2} that will also be useful in the next section. For a digraph
$G=(X,E)$ one can consider the triple (called a \emph{context}) $\mathbb{K}(G) := (X,X,E^\complement)$, where the relation $E^\complement \subseteq X\times X$ is the complement of the digraph relation $E$: $E^\complement = (X\times X){\setminus} E$. One can then define a Galois connection via so-called \emph{polars} as follows. The maps
$$E^\complement_\triangleright : (\powerset(X),\subseteq) \to (\powerset(X),\supseteq)\quad \text{and}\quad E^\complement_\triangleleft : (\powerset(X),\supseteq) \to (\powerset(X),\subseteq)$$
are given by
\begin{align}
E^\complement_\triangleright (Y) &=\{\, x \in X \mid (\forall\, y \in Y) (y,x) \notin E \,\},\notag\\
E^\complement_\triangleleft (Y) &=\{\, z \in X \mid (\forall\, y \in Y) (z,y) \notin E \,\}.\notag
\end{align}
The so-called \emph{concept lattice} $\mathrm{CL}(\mathbb{K}(G))$  of the
context $\mathbb{K}(G)= (X,X,E^\complement)$,
given by
$$ \mathrm{CL}(\mathbb{K}(G)) = \{\, Y \subseteq X \mid
(E^\complement_\triangleleft \circ E^\complement_\triangleright) (Y)= Y \,\},$$
is a complete lattice when ordered by inclusion.

The lemma below is needed in the proof of Lemma~\ref{lem:Lemma2-equiv}. 

\begin{lem}\label{lem:Ex-xE-phi}
Let $G=(X,E)$ be a reflexive digraph and $\varphi \in  \mpe{G}{\twoT}$.

\begin{enumerate}[{\upshape (i)}]
\item If $x \in \varphi^{-1}(1)$ and $zE \subseteq xE$, then $z \in \varphi^{-1}(1)$.  
\item If $x \in \varphi^{-1}(0)$ and $Ez \subseteq Ex$, then $z \in \varphi^{-1}(0)$. 
\end{enumerate}
\end{lem}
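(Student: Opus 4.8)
The plan is to prove both statements directly from Lemma~\ref{lem:Plos1.3}, which gives the explicit ``complementary'' descriptions of $\varphi^{-1}(0)$ and $\varphi^{-1}(1)$ for a maximal partial $E$-preserving map $\varphi$. The two parts are order-dual to each other, so I would prove (i) carefully and then indicate that (ii) follows by the analogous argument with the roles of $E$ and its converse (equivalently, of $xE$ and $Ex$, and of $\varphi^{-1}(1)$ and $\varphi^{-1}(0)$) interchanged.

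For part (i): suppose $x \in \varphi^{-1}(1)$ and $zE \subseteq xE$, and assume for contradiction that $z \notin \varphi^{-1}(1)$. By Lemma~\ref{lem:Plos1.3}(ii), $z \notin \varphi^{-1}(1)$ means there exists some $w \in \varphi^{-1}(0)$ with $(z,w) \in E$, i.e.\ $w \in zE$. Since $zE \subseteq xE$, we get $w \in xE$, that is $(x,w)\in E$. But now $x \in \varphi^{-1}(1)$, $w \in \varphi^{-1}(0)$ and $(x,w)\in E$, which directly contradicts the fact that $\varphi$ preserves $E$ (an $E$-edge from a $1$-point to a $0$-point would force $1 \le 0$ in $\twoT$). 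Hence $z \in \varphi^{-1}(1)$. For part (ii), by symmetry: if $x \in \varphi^{-1}(0)$ and $Ez \subseteq Ex$, and $z \notin \varphi^{-1}(0)$, then Lemma~\ref{lem:Plos1.3}(i) yields some $w \in \varphi^{-1}(1)$ with $(w,z) \in E$, i.e.\ $w \in Ez \subseteq Ex$, so $(w,x)\in E$ with $w\in\varphi^{-1}(1)$ and $x\in\varphi^{-1}(0)$, again contradicting that $\varphi$ preserves $E$.

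I do not anticipate a genuine obstacle here; the only point requiring a little care is to cite the correct part of Lemma~\ref{lem:Plos1.3} in each case (part (ii) of that lemma characterises $\varphi^{-1}(1)$ and is the one needed for our part (i), while part (i) of that lemma characterises $\varphi^{-1}(0)$ and is needed for our part (ii)), and to make explicit the trivial fact that since $\varphi$ is total on its domain with values in $\{0,1\}$ and restricted to $\{0,1\}$, every point is in exactly one of $\varphi^{-1}(0)$, $\varphi^{-1}(1)$ when it is in $\dom\varphi$ — so that ``$z\notin\varphi^{-1}(1)$'' combined with the membership forced by Lemma~\ref{lem:Plos1.3}(ii) is the contradiction we want. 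Reflexivity of $E$ is not actually used in the core argument, though it is part of the hypothesis and is what makes $\varphi$ well behaved; I would simply keep the hypothesis as stated.
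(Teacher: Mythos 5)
Your proof is correct and follows essentially the same route as the paper: both arguments rest on Lemma~\ref{lem:Plos1.3} together with the containment $zE \subseteq xE$ (resp.\ $Ez \subseteq Ex$), the only difference being that you argue by contradiction via a witness $w$ while the paper argues directly by showing $(z,y)\notin E$ for every $y\in\varphi^{-1}(0)$. Your observation that reflexivity is not needed is also consistent with the paper's proof.
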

\begin{proof} For (i), let $x \in \varphi^{-1}(1)$ and $zE \subseteq xE$. Consider any $y \in \varphi^{-1}(0)$. By Lemma~\ref{lem:Plos1.3}, $(x,y) \notin E$, so  $ y\notin xE$. From the assumption we get $y \notin zE$, i.e. $(z,y) \notin E$. As $y \in \varphi^{-1}(0)$ was arbitrary,  by Lemma~\ref{lem:Plos1.3} we get $\varphi(z)=1$. The proof of (ii) follows by a dual argument. 
\end{proof}

The following result will be needed in the next section.

\begin{lem}\label{lem:Lemma2-equiv}
Let $G=(X,E)$ be a TiRS digraph and $\varphi, \psi \in 
\mpe{G}{\twoT}$. 
\begin{enumerate}[{\upshape (i)}]
    \item If $\varphi^{-1}(0) \subseteq X{\setminus}\psi^{-1}(1)$ then $\varphi^{-1}(0) \subseteq \psi^{-1}(0)$.
    \item If $\varphi^{-1}(1) \subseteq X{\setminus}\psi^{-1}(0)$ then $\varphi^{-1}(1)\subseteq \psi^{-1}(1)$.
\end{enumerate}
\end{lem}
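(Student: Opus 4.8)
The plan is to prove the three parts in order, since (iii) will follow immediately once (i) and (ii) are in hand. For part (i), I would argue contrapositively at the level of elements: take $x\in\varphi^{-1}(0)$ and show $x\in\psi^{-1}(0)$. By Lemma~\ref{lem:Plos1.3}(ii), $x\in\psi^{-1}(0)$ is equivalent to the existence of some $y\in\psi^{-1}(1)$ with $(x,y)\in E$; so suppose no such $y$ exists. Apply (Ti) together with the hypothesis $\varphi^{-1}(0)\subseteq X{\setminus}\psi^{-1}(1)$: since $x\in\varphi^{-1}(0)$, by Lemma~\ref{lem:Plos1.3}(i) there is no $w\in\varphi^{-1}(1)$ with $(w,x)\in E$, which via Lemma~\ref{lem:Ex-xE-phi} and the (Ti)/(R) machinery should force a witness in $\psi^{-1}(1)$ related to $x$, contradicting $\varphi^{-1}(0)\cap\psi^{-1}(1)=\varnothing$. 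Part (ii) is the order-dual: swap the roles of $0$ and $1$ and of $xE$ and $Ex$, swap (Ti) for its built-in symmetric consequence, and run the identical argument.

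For part (iii), assume $\varphi^{-1}(1)\subseteq\psi^{-1}(1)$ and $\varphi^{-1}(0)\subseteq\psi^{-1}(0)$. Since $\varphi$ and $\psi$ are both \emph{maximal} partial $E$-preserving maps and their domains are $\varphi^{-1}(0)\cup\varphi^{-1}(1)\subseteq\psi^{-1}(0)\cup\psi^{-1}(1)$, the map $\psi$ extends $\varphi$; maximality of $\varphi$ then gives $\varphi=\psi$. Alternatively, one can avoid invoking maximality directly and instead use Lemma~\ref{lem:Plos1.3}: the two displayed inclusions together with parts (i) and (ii) (or their symmetric reformulations via the equivalence $\varphi^{-1}(1)\subseteq\psi^{-1}(1)\iff\psi^{-1}(0)\subseteq\varphi^{-1}(0)$ noted after Lemma~\ref{lem:Plos1.3}) pin down both preimages to be equal, hence $\varphi=\psi$ as partial maps.

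The main obstacle I anticipate is the element-chasing in part (i): turning the ``global'' set-inclusion hypothesis $\varphi^{-1}(0)\cap\psi^{-1}(1)=\varnothing$ into a ``local'' contradiction about a single vertex $x$. This is exactly where (Ti) must be used in an essential way — the naive attempt (just quoting Lemma~\ref{lem:Plos1.3}) does not close, because knowing $x$ has no $\varphi$-predecessor in $\varphi^{-1}(1)$ does not by itself say anything about $\psi$. The trick will be: from $x\in\varphi^{-1}(0)$ and the assumption that $x\notin\psi^{-1}(0)$, Lemma~\ref{lem:Plos1.3}(ii) produces $y\in\psi^{-1}(1)$ with $(x,y)\in E$; apply (Ti) to $(x,y)$ to get $z$ with $zE\subseteq xE$ and $Ez\subseteq Ey$; then Lemma~\ref{lem:Ex-xE-phi}(i) applied with $\psi$ gives $z\in\psi^{-1}(1)$, while a short argument using $\varphi(x)=0$, reflexivity, and $zE\subseteq xE$ forces $z\in\varphi^{-1}(0)$ — contradicting $\varphi^{-1}(0)\cap\psi^{-1}(1)=\varnothing$. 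The remaining parts are then routine dualisation and bookkeeping.
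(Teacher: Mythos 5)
Your overall strategy is the paper's own: for (i), argue by contradiction, use Lemma~\ref{lem:Plos1.3} to extract a witness in $\psi^{-1}(1)$, apply (Ti) to the resulting edge, and use Lemma~\ref{lem:Ex-xE-phi} twice to manufacture an element of $\varphi^{-1}(0)\cap\psi^{-1}(1)$; (ii) is the dual and (iii) is bookkeeping. (For (iii), your first suggestion --- that the two inclusions make $\psi$ an extension of $\varphi$, so maximality of $\varphi$ forces $\varphi=\psi$ --- is a perfectly good alternative to the paper's appeal to antisymmetry of the order $\leqslant$.)

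However, the element-chase in your final paragraph has the edge pointing the wrong way, and as written both concluding steps fail. If $x\in\varphi^{-1}(0)$ and $x\notin\psi^{-1}(0)$, the relevant clause is Lemma~\ref{lem:Plos1.3}(i) applied to $\psi$, which yields $y\in\psi^{-1}(1)$ with $(y,x)\in E$ --- an edge \emph{into} $x$ --- not $(x,y)\in E$ as you wrote. With your orientation, (Ti) gives $z$ with $zE\subseteq xE$ and $Ez\subseteq Ey$; but then Lemma~\ref{lem:Ex-xE-phi}(i) cannot place $z$ in $\psi^{-1}(1)$, since that would require $zE\subseteq wE$ for some $w\in\psi^{-1}(1)$, and all you know about $x$ is that $x\notin\psi^{-1}(0)$, which for a partial map does not mean $x\in\psi^{-1}(1)$. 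Likewise, $zE\subseteq xE$ together with $\varphi(x)=0$ and reflexivity only yields $(x,z)\in E$, hence $0=\varphi(x)\leqslant\varphi(z)$ whenever $z\in\dom\varphi$ --- which is vacuous and does not force $\varphi(z)=0$; the correct tool, Lemma~\ref{lem:Ex-xE-phi}(ii), needs $Ez\subseteq Ex$. Once the orientation is corrected, (Ti) applied to $(y,x)$ produces $z$ with $zE\subseteq yE$ and $Ez\subseteq Ex$, and then Lemma~\ref{lem:Ex-xE-phi}(i) (using $y\in\psi^{-1}(1)$) gives $z\in\psi^{-1}(1)$ while Lemma~\ref{lem:Ex-xE-phi}(ii) (using $x\in\varphi^{-1}(0)$) gives $z\in\varphi^{-1}(0)$, which is exactly the paper's contradiction. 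So the idea is right, but the proof only closes after this systematic reversal.
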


\begin{proof}
We will prove the contrapositive of (i). Assume there exists $y\in \varphi^{-1}(0)$ such that $y \notin \psi^{-1}(0)$. Then by Lemma~\ref{lem:Plos1.3} there exists $x \in \psi^{-1}(1)$ with $(x,y) \in E$. By the condition (Ti), there exists $z$ with $zE \subseteq xE$ and $Ez \subseteq Ey$. By Lemma~\ref{lem:Ex-xE-phi} we get that $z \in \varphi^{-1}(0)$ and also $z \in \psi^{-1}(1)$. Hence $\varphi^{-1}(0) \nsubseteq X{\setminus}\psi^{-1}(1)$. The proof of (ii) 
follows a similar argument. 
\end{proof}

\section{Plo\v{s}\v{c}ica spaces}
\label{sec:ploscicaspaces}

In this section we provide a description of the Urquhart dual spaces of general lattices (with bounds) in Plo\v{s}\v{c}ica's setting. This is the first of two unfulfilled (in our view) tasks in the paper~\cite{Plos95}. 
We will naturally call these objects \emph{Plo\v{s}\v{c}ica spaces}. These spaces are TiRS digraphs with topology $\mathbb{P}=(X,E,\T)$, where $E$ is the edge set and $\T$ is the same compact topology that was used by both 
Urquhart~\cite{U78} and Plo\v{s}\v{c}ica~\cite{Plos95}. 
We believe that Plo\v{s}\v{c}ica spaces are easier to work with than Urquhart's L-spaces equipped with two quasi-order relations. Also this concept works naturally with the set $\mpm{\mathbb{P}}{\twoT_\T}$ of MPMs, which forms the original lattice $\Lalg$ in case the 
Plo\v{s}\v{c}ica space
$\mathbb{P}$ is the dual $\mathcal{D}(\Lalg) = (P_{\Lalg}, E, \T)$ of\, $\Lalg$. Lastly, in Plo\v{s}\v{c}ica spaces the edge relation becomes the Priestley order on $\Lalg$ in case the lattice $\Lalg$ is distributive.  

In this section we also prove that every Plo\v{s}\v{c}ica space 
$\mathbb{P}=(X,E,\T)$ is 
digraph-homeomorphic to 
the space $\mathbb{P}_{\mathcal{L}(\mathbb{P})}=(P_{\mathcal{L}(\mathbb{P})},E,\T_{\mathcal{L}(\mathbb{P})})$ 
dual to the lattice $\mathcal{L}(\mathbb{P})$. This is, in the new setting, an equivalent version of Urquhart's result~\cite[Theorem~2]{U78} that every L-space $\mathbb{S}=(X, \le_1, \le_2, \T)$ is 
order-homeomorphic to 
the dual space of the lattice $\mathcal{L}(\mathbb{S})$. 

\begin{df}\label{def:ploscicaspaces}
A \emph{Plo\v{s}\v{c}ica space} is a structure $\mathbb{P}=(X,E,\T)$ such that
\begin{enumerate}[(1)]
 \item $(X,E)$ 
 is a TiRS digraph and $(X,\T)$ is a compact topological space.
 \item $\mathbb{P}$ is totally edge-disconnected, meaning that for all $x,y\in X$ such that $(x,y)\notin E$ there exists $\varphi\in\mpm{\mathbb{P}}{\twoT_\T}$ with $\varphi(x)=1$ and $\varphi(y)=0$.
\item For any $\varphi$, $\psi \in \mpm{\mathbb{P}}{\twoT_\T}$, the sets
$$ E_{\triangleleft}^{\complement}(\varphi^{-1}(0) \cap \psi^{-1}(0)) \quad \text{ and } \quad E_{\triangleright}^{\complement}(\varphi^{-1}(1)\cap \psi^{-1}(1))$$
are closed.
\item The family
 $$ \left\{\, X{\setminus}\varphi^{-1}(1)\mid \varphi \in \mpm{\mathbb{P}}{\twoT_\T}  
\,\right\} \cup \left\{\, X{\setminus}\varphi^{-1}(0) \mid \varphi\in \mpm{\mathbb{P}}{\twoT_\T}  
\,\right\} $$
forms a subbase for $\T$.
\end{enumerate}

\end{df}

The total edge-disconnectedness of $\mathbb{P}$ has the following consequence: 

\begin{lem}\label{lem:up/down set separation}
    Let $\mathbb{P}=(X,E,\T)$  be a digraph with topology that is totally edge-disconnected. Then for any $x, y\in X$ we have: 
    \begin{enumerate}[{\upshape (i)}]
        \item If $yE \not\subseteq xE $ then there exists $\varphi \in \mpm{\mathbb{P}}{\twoT_\T}$ such that $\varphi(x) = 1$ and $\varphi(y)\neq 1$. 
        \item If $Ey \not\subseteq Ex $ then there exists $\varphi \in \mpm{\mathbb{P}}{\twoT_\T}$ such that $\varphi(x) = 1$ and $\varphi(y)\neq 1$. 
    \end{enumerate}
\end{lem}

\begin{proof} 
    If $yE\not\subseteq xE$ then there is $z\in X$ such that $(y,z)\in E$ and $(x,z)\notin E$. By the total edge-disconnectedness property there exists $\varphi\in \mpm{\mathbb{P}}{\twoT_\T}$ such that $\varphi(x)=1$ and $\varphi(z)=0$. Since $(y,z)\in E$ and $\varphi(z)=0$, we have $\varphi(y) \neq 1$. This gives (i). 
    We can apply a similar argument to show (ii). 
\end{proof}

The total edge-disconnectedness property (2) 
in the definition of Plo\v{s}\v{c}ica spaces is a generalisation of the total order-disconnectedness of Priestley spaces (cf. \cite{Pr70}, \cite{Pr72}). We recall that total order-disconnectedness means that for any two points $x \ne y$ there exists a clopen up-set (or, equivalently, a clopen down-set) that separates them. By Lemma~\ref{lem:antisym} we have in TiRS digraphs that if $x\neq y$ then $yE\not\subseteq xE$ or $Ey\not\subseteq Ex$. Hence by the above lemma, the total edge-disconnectedness can be thought of as saying that if $x \neq y$ then there is an MPM 
for which $\varphi(x)=1$ and $\varphi(y)\neq 1$ or there is an MPM $\psi \in \mpm{\mathbb{P}}{\twoT_\T}$ for which $\psi(x)=0$ and $\psi(y) \neq 0$.
(If $\Lalg$ is distributive and $E$ is the Priestley order on its dual, this gives a clopen up-set $\varphi^{-1}(1)$ --- or, equivalently, a clopen down-set $\psi^{-1}(0)$ --- that separates $x$ and $y$.)

We recall that in Plo\v{s}\v{c}ica's representation, the dual space of a lattice $\Lalg$ is given by 
$\mathcal{D}(\Lalg)=(P_{\Lalg},E,\T_{\Lalg})$, where $P_{\Lalg} = \mph{\Lalg}{\twoB}$ and $\T_\Lalg$ has a subbase of open sets given by
$\left\{\, P_{\Lalg} {\setminus}e_a^{-1}(1)\mid a \in L  \,\right\} \cup \left\{\, 
P_{\Lalg} {\setminus}e_a^{-1}(0) \mid a \in L   \,\right\}$. Note that this corresponds to the subbase $\{\,X_\Lalg{\setminus}u(a) \mid a \in L\,\} \cup \{\, X_\Lalg {\setminus} r(u(a)) \mid a \in L \,\}$ as defined by Urquhart~\cite[p.~47]{U78}. (The maps $u$ and $r$ are also defined there.) 

\begin{rem}
After the submission of this paper, we were made aware of another definition of Plo\v{s}\v{c}ica space in~\cite[Section 11]{BCM25}. We observe that our definitions are both translations of Urquhart's dual spaces and hence are essentially the same. Core to our definition is the use of MPMs to define the closure operator required to recover the lattice---to maintain similarity with Priestley duality. In~\cite{BCM25} the authors emulate ideas from modal logic and use a $\Box$ and  $\blacklozenge$ to define the closure operator applied in the recovery of the lattice and the description of the topology. 
\end{rem}

\begin{lem}\label{lem:doubly-disconnected}
Let \,$\Lalg$ be a bounded lattice. Then the structure $\mathcal{D}(\Lalg)=(P_\Lalg,E,\T_\Lalg)$ is totally edge-disconnected. 
\end{lem}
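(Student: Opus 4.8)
The plan is to verify conditions (2a) and (2b) of Definition~\ref{def:ploscicaspaces} directly, using Proposition~\ref{lem-eval}: every element of $\mpm{\mathcal{D}(\Lalg)}{\twoT_\T}$ is an evaluation map $e_a$ for some $a \in L$, and conversely each such $e_a$ lies in $\mpm{\mathcal{D}(\Lalg)}{\twoT_\T}$. Hence it suffices, in each case, to exhibit a single witness $a \in L$. We work with the identification of an MPH $f \in P_\Lalg$ with the disjoint filter-ideal pair $\langle f^{-1}(1), f^{-1}(0)\rangle$ and with the reformulation of (E1): for $f, g \in P_\Lalg$, we have $fEg$ if and only if $f^{-1}(1) \cap g^{-1}(0) = \varnothing$. (If $z$ lies in this intersection then $z \in \dom f \cap \dom g$ with $f(z) = 1 \nleq 0 = g(z)$; conversely any failure of $f(z) \le g(z)$ on $\dom f \cap \dom g$ forces $f(z) = 1$ and $g(z) = 0$.)

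For (2a), suppose $x, y \in P_\Lalg$ satisfy $yE \not\subseteq xE$, and fix $g \in yE \setminus xE$. Then $y^{-1}(1) \cap g^{-1}(0) = \varnothing$ while $x^{-1}(1) \cap g^{-1}(0) \neq \varnothing$; pick $a \in x^{-1}(1) \cap g^{-1}(0)$. As $a \in x^{-1}(1)$ we get $a \in \dom x$ and $e_a(x) = x(a) = 1$. As $a \in g^{-1}(0)$ and $y^{-1}(1) \cap g^{-1}(0) = \varnothing$ we get $a \notin y^{-1}(1)$, whence $e_a(y) \neq 1$ (it equals $0$ or is undefined). By Proposition~\ref{lem-eval}(i), $e_a \in \mpm{\mathcal{D}(\Lalg)}{\twoT_\T}$, so $e_a$ is the required MPM. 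Condition (2b) is proved by the order-dual argument: given $g \in Ey \setminus Ex$ one has $g^{-1}(1) \cap y^{-1}(0) = \varnothing$ but $g^{-1}(1) \cap x^{-1}(0) \neq \varnothing$, and any $a$ in the latter intersection satisfies $e_a(x) = x(a) = 0$ and $a \notin y^{-1}(0)$, so $e_a(y) \neq 0$.

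There is no genuine obstacle here; the only step deserving attention is that ``$\varphi(y) \neq 1$'' in (2a) must be read as permitting $\varphi(y)$ to be undefined --- which is why the witness $a$ is taken from $g^{-1}(0)$: we can conclude $a \notin y^{-1}(1)$, but not that $a$ belongs to $\dom y$. Everything else is a routine unwinding of (E1) together with the MPH / disjoint-filter-ideal-pair correspondence, so the proof will be short.
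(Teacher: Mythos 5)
Your proof is correct, and it takes a genuinely different (and more literal) route than the paper's. The paper's own proof starts from $x\neq y$, deduces $x^{-1}(1)\neq y^{-1}(1)$ or $x^{-1}(0)\neq y^{-1}(0)$, and extracts a separating evaluation map $e_a$ or $e_b$; this establishes the ``separation'' consequence discussed in the remark after Definition~\ref{def:ploscicaspaces}, but it does not track which of the two relational hypotheses, $yE\not\subseteq xE$ or $Ey\not\subseteq Ex$, produces which kind of witness --- the ``without loss of generality'' there could equally hand you $\varphi(y)=1$ and $\varphi(x)\neq 1$, with the roles reversed. You instead start from the hypotheses of (2a) and (2b) themselves, reformulate $fEg$ as $f^{-1}(1)\cap g^{-1}(0)=\varnothing$ (which is exactly (E2) under the MPH/MDFIP correspondence), and read off a witness $a\in x^{-1}(1)\cap g^{-1}(0)$ with precisely the orientation the definition demands. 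Both arguments rest on the same ingredient, namely Proposition~\ref{lem-eval}(i) certifying that $e_a\in\mpm{\mathcal{D}(\Lalg)}{\twoT_\T}$, but yours verifies conditions (2a) and (2b) exactly as stated, and your observation that ``$\varphi(y)\neq 1$'' must be read as allowing $\varphi(y)$ to be undefined is a point the paper glosses over. What the paper's shorter argument buys is brevity; what yours buys is an airtight match with the definition.
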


\begin{proof}
    If $x,y\in P_\Lalg$ and $(x, y)\notin E$ then by definition of $E$ we have that there exists $a \in \dom x\cap \dom y$ such that $x(a)\not\leqslant y(a)$. It must then be the case that $x(a) = 1$ and $y(a) = 0$. Therefore taking $e_a \in \mpm{\mathcal{D}}{\twoT_\T}$ gives the result. 
\end{proof}

The proof of the following lemma is the same as that for~\cite[Lemma 6]{U78}. 

\begin{lem}\label{lem:compactP(L)}
Let $\Lalg$ be a lattice with bounds. Then $(P_\Lalg,\T_\Lalg)$ is a compact space.
\end{lem}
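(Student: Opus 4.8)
The plan is to show that the subbasis of closed sets $\{V_a \mid a \in L\} \cup \{W_a \mid a \in L\}$ generating $\T_\Lalg$ has the finite intersection property (FIP) criterion for compactness, via the Alexander Subbase Lemma. That is, it suffices to take a family $\mathcal{F} = \{V_a \mid a \in A\} \cup \{W_b \mid b \in B\}$ of subbasic closed sets with the finite intersection property, and produce a point of $P_\Lalg$ lying in all of them. Recall $V_a = \{f \mid f(a) = 0\}$, $W_b = \{f \mid f(b) = 1\}$. A point $f \in \bigcap \mathcal{F}$ corresponds, via the bijection with MDFIPs, to an MDFIP $\langle F, I\rangle$ with $B \subseteq F$ and $A \subseteq I$; so what we really must show is that there exists an MDFIP extending $\langle \text{filter generated by } B,\ \text{ideal generated by } A\rangle$, and for that (by a Zorn's lemma argument, which is standard for MDFIPs) it is enough to show that the filter generated by $B$ and the ideal generated by $A$ are \emph{disjoint}.

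So the heart of the proof is: \emph{if the filter $\langle B\rangle$ and the ideal $\langle A\rangle$ intersect, then already some finite subfamily of $\mathcal{F}$ has empty intersection}, contradicting FIP. Concretely, if $\langle B \rangle \cap \langle A \rangle \neq \varnothing$, pick $c$ in the intersection; by the description of generated filters/ideals recalled just before the statement, $c \geqslant b_1 \wedge \dots \wedge b_m$ for some $b_1, \dots, b_m \in B$ and $c \leqslant a_1 \vee \dots \vee a_n$ for some $a_1, \dots, a_n \in A$. Then $b_1 \wedge \dots \wedge b_m \leqslant a_1 \vee \dots \vee a_n$. I claim $W_{b_1} \cap \dots \cap W_{b_m} \cap V_{a_1} \cap \dots \cap V_{a_n} = \varnothing$: any $f$ in this set is a partial homomorphism whose domain (a bounded sublattice) contains all the $b_i$ and $a_j$, hence contains $b_1 \wedge \dots \wedge b_m$ and $a_1 \vee \dots \vee a_n$; since $f$ preserves meets and joins on its domain, $f(b_1 \wedge \dots \wedge b_m) = 1$ and $f(a_1 \vee \dots \vee a_n) = 0$, but $b_1 \wedge \dots \wedge b_m \leqslant a_1 \vee \dots \vee a_n$ forces $f(b_1 \wedge \dots \wedge b_m) \leqslant f(a_1 \vee \dots \vee a_n)$, a contradiction. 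This contradicts the finite intersection property of $\mathcal{F}$. (One should also dispatch the trivial edge cases $A = \varnothing$ or $B = \varnothing$: if, say, $B = \varnothing$, the filter is $\{1\}$, and disjointness from $\langle A \rangle$ fails only if $1 \in \langle A \rangle$, i.e. $1 \leqslant a_1 \vee \dots \vee a_n$, and then $V_{a_1} \cap \dots \cap V_{a_n} = \varnothing$ since no MPH sends $1$ to $0$; symmetrically for $A = \varnothing$.)

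Having established that $\langle B \rangle$ and $\langle A \rangle$ are disjoint, I then invoke the standard Zorn's lemma argument (the union of a chain of disjoint filter-ideal pairs is again a disjoint filter-ideal pair, since membership of an element in filter or ideal is witnessed at a finite stage) to extend $\langle \langle B \rangle, \langle A \rangle \rangle$ to a \emph{maximal} disjoint filter-ideal pair $\langle F, I \rangle$; the corresponding MPH $f$ (with $f^{-1}(1) = F$, $f^{-1}(0) = I$) then satisfies $f(b) = 1$ for all $b \in B$ and $f(a) = 0$ for all $a \in A$, i.e. $f \in \bigcap \mathcal{F}$. By the Alexander Subbase Lemma, $(P_\Lalg, \T_\Lalg)$ is compact.

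The main obstacle, or rather the only place requiring genuine care, is the bookkeeping around the generated filter and ideal — translating the abstract FIP hypothesis into the concrete inequality $b_1 \wedge \dots \wedge b_m \leqslant a_1 \vee \dots \vee a_n$ and making sure the edge cases with empty $A$ or $B$ are handled; the rest (Alexander subbase, Zorn on disjoint filter-ideal pairs, the bijection MPH $\leftrightarrow$ MDFIP already recorded in the preliminaries) is routine. Since the paper has already noted that $\T_\Lalg$ coincides with Urquhart's topology and cited Urquhart's Lemma~6 for compactness, an alternative is simply to appeal to that; but giving the self-contained Alexander-subbase argument as above is cleaner in Plo\v{s}\v{c}ica's language.
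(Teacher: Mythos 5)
Your proposal is correct and follows essentially the same route as the paper: both invoke the Alexander subbase lemma (yours in the closed-set/FIP form, the paper's in the open-cover form, which are complements of one another), both reduce to showing that the filter generated by the $W$-indices and the ideal generated by the $V$-indices are disjoint via a finite witness $b_1\wedge\dots\wedge b_m\le a_1\vee\dots\vee a_n$, and both finish with Zorn's lemma and the MPH--MDFIP correspondence. Your explicit treatment of the empty-index edge cases is a small point of extra care not present in the paper, but the argument is the same.
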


\begin{rem}
We remark that Urquhart~\cite[Section 2]{U78} simply says that for an L-space $\mathbb{S}$ the family of all doubly closed stable sets ordered by inclusion forms the dual lattice of $\mathbb{S}$ with the lattice operations given by $Y\wedge Z = Y\cap Z$ and $Y\vee Z = \ell (r(Y)\cap r(Z))$. (The maps $\ell$ and $r$ are defined in \cite[p.~46]{U78}.)

Urquhart's doubly closed stable sets $Y,Z$ correspond to $\varphi^{-1}(1)$ and $\psi^{-1}(1)$ for our MPMs $\varphi, \psi \in \mpm{\mathbb{S}}{\twoT_\T}$. 
Hence his $Y\wedge Z = Y\cap Z$ corresponds to our $(\varphi \wedge \psi)^{-1}(1)$ and $Y\vee Z = \ell (r(Y)\cap r(Z))$  corresponds to our $(\varphi \vee \psi)^{-1}(0)$. Urquhart's condition (2) on an L-space says that for doubly closed stable sets $Y,Z$, the sets $r(Y\cap Z)$ (corresponding to our set $(\varphi \wedge \psi)^{-1}(0)= E_{\triangleright}^{\complement}
\left(\varphi^{-1}(1)
\cap \psi^{-1}(1)\right)$) and $ \ell (r(Y)\cap r(Z))$ (corresponding to our set $(\varphi \vee \psi)^{-1}(1)=E^{\complement}_{\triangleleft}\left(
\varphi^{-1}(0)\cap \psi^{-1}(0) \right)$) are closed.
\end{rem}

We now prove a series of  results that will culminate in Theorem~\ref{thm:graph-homeomorphic}, which shows that every Plo\v{s}\v{c}ica space arises as the dual space of a lattice with bounds.

\begin{prop}
Let $\Lalg$ be a lattice with bounds. Then $\mathcal{D}(\Lalg) = (P_\Lalg,E,\T_\Lalg)$ is a Plo\v{s}\v{c}ica space.
\end{prop}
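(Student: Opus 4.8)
The plan is to check the four clauses of Definition~\ref{def:ploscicaspaces} for $\mathcal{D}(\Lalg)=(P_\Lalg,E,\T_\Lalg)$, and three of them come almost for free. Clause~(1) holds since $(P_\Lalg,E)$ is a TiRS digraph by \cite[Proposition~2.3]{P3} and $(P_\Lalg,\T_\Lalg)$ is compact by Lemma~\ref{lem:compactP(L)}. Clause~(2) is precisely Lemma~\ref{lem:doubly-disconnected}. For clause~(4), Proposition~\ref{lem-eval}(i),(ii) tells us that the MPMs of $\mathcal{D}(\Lalg)$ are exactly the evaluation maps $e_a$ with $a\in L$, and since $e_a^{-1}(1)=W_a$ and $e_a^{-1}(0)=V_a$ by definition, the family appearing in clause~(4) is $\{\,P_\Lalg\setminus W_a\mid a\in L\,\}\cup\{\,P_\Lalg\setminus V_a\mid a\in L\,\}$, which is by construction a subbase for $\T_\Lalg$ (it consists of the complements of the subbasic closed sets $V_a,W_a$).

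So the real content is clause~(3). Given $\varphi,\psi\in\mpm{\mathcal{D}(\Lalg)}{\twoT_\T}$, I would first use Proposition~\ref{lem-eval}(ii) to write $\varphi=e_a$, $\psi=e_b$ for suitable $a,b\in L$; then $\varphi^{-1}(0)\cap\psi^{-1}(0)=V_a\cap V_b$ and $\varphi^{-1}(1)\cap\psi^{-1}(1)=W_a\cap W_b$. Because $f^{-1}(0)$ is an ideal and $f^{-1}(1)$ a filter for every MPH $f$, these intersections simplify to $V_a\cap V_b=V_{a\vee b}$ and $W_a\cap W_b=W_{a\wedge b}$. It therefore suffices to establish, for each $c\in L$, the two identities $E^{\complement}_{\triangleleft}(V_c)=W_c$ and $E^{\complement}_{\triangleright}(W_c)=V_c$: then $E^{\complement}_{\triangleleft}(\varphi^{-1}(0)\cap\psi^{-1}(0))=W_{a\vee b}$ and $E^{\complement}_{\triangleright}(\varphi^{-1}(1)\cap\psi^{-1}(1))=V_{a\wedge b}$ are subbasic closed sets, hence closed.

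For $E^{\complement}_{\triangleleft}(V_c)=W_c$ I would argue directly, rewriting (E1) as $fEg\iff f^{-1}(1)\cap g^{-1}(0)=\varnothing$. The inclusion $W_c\subseteq E^{\complement}_{\triangleleft}(V_c)$ is immediate: if $f(c)=1$ and $g\in V_c$ then $c\in f^{-1}(1)\cap g^{-1}(0)$, so $(f,g)\notin E$. For the reverse inclusion, assume $f(c)\neq 1$, i.e. $c\notin f^{-1}(1)$; as $f^{-1}(1)$ is an up-set, $f^{-1}(1)\cap{\downarrow}c=\varnothing$, so $\langle f^{-1}(1),{\downarrow}c\rangle$ is a disjoint filter--ideal pair and, by Zorn's Lemma, extends to an MDFIP $\langle G,J\rangle$. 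The corresponding MPH $g$ then has $c\in J=g^{-1}(0)$, so $g\in V_c$, while $f^{-1}(1)\cap g^{-1}(0)\subseteq G\cap J=\varnothing$, so $(f,g)\in E$; hence $f\notin E^{\complement}_{\triangleleft}(V_c)$. The identity $E^{\complement}_{\triangleright}(W_c)=V_c$ is the dual statement, proved the same way with ${\downarrow}c$ replaced by ${\uparrow}c$ and using that $f^{-1}(0)$ is a down-set.

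I expect the only genuinely delicate point to be this last polar computation. One is tempted to read $E^{\complement}_{\triangleleft}(V_c)=W_c$ straight off Lemma~\ref{lem:Plos1.3}, but that lemma is phrased for maximal partial $E$-preserving maps, and I would rather not presuppose that these coincide with the MPMs of $\mathcal{D}(\Lalg)$; the Zorn's-Lemma extension of a disjoint filter--ideal pair is the cleanest self-contained route. Everything else is routine assembly of facts already proved about $\mathcal{D}(\Lalg)$.
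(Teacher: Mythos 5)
Your proposal is correct and follows essentially the same route as the paper: clauses (1), (2) and (4) are dispatched by \cite[Proposition~2.3]{P3}, Lemma~\ref{lem:doubly-disconnected}, Lemma~\ref{lem:compactP(L)} and Proposition~\ref{lem-eval}(ii), and clause (3) is reduced via $\varphi=e_a$, $\psi=e_b$ to identifying the two polars with the subbasic closed sets $W_{a\vee b}$ and $V_{a\wedge b}$. The only difference is that you carry out the polar computation $\ECL(V_c)=W_c$ and $\ECR(W_c)=V_c$ explicitly via a Zorn's-Lemma extension of the pair $\langle f^{-1}(1),{\downarrow}c\rangle$, a step the paper compresses into the single remark that $\varphi\wedge\psi=e_{a\wedge b}$ and $\varphi\vee\psi=e_{a\vee b}$; your version is a sound and self-contained filling-in of that detail (and your identity $V_a\cap V_b=V_{a\vee b}$ is the correct form of the relation the paper needs here).
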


\begin{proof}
As mentioned after Definition~\ref{def:TiRS}, the fact that $(P_\Lalg,E)$ is a TiRS digraph follows from~\cite[Proposition 2.3]{P3}. The total edge-disconnectedness of $\mathcal{D}(\Lalg)$ follows from Lemma~\ref{lem:doubly-disconnected} and the compactness of the space $(P_\Lalg,\T_\Lalg)$ follows from Lemma~\ref{lem:compactP(L)}. 

Condition (4) simply follows from the Plo\v{s}\v{c}ica representation theorem, i.e. the fact that all elements of $\mpm{\mathcal{D}(\Lalg)}{\twoT_\T}$ are of the form $e_a$ for some $a \in L$ (cf. Proposition~\ref{lem-eval}(ii)). To prove condition (3), we use the fact $\varphi=e_a$ and $\psi=e_b$ for some $a, b \in L$, and $\varphi \wedge \psi = e_{a\wedge b}$ and $\varphi \vee \psi = e_{a \vee b}$.
\end{proof}

\begin{prop}\label{prop:L(S)boundedlattice}
Let $\mathbb{P}=(X,E,\T)$ be a Plo\v{s}\v{c}ica space. Let $\leqslant$ be the ordering on $\mpm{\mathbb{P}}{\twoT_\T}$ defined by
\[\varphi \leqslant \psi \quad \Longleftrightarrow \quad
\varphi^{-1}(1) \subseteq \psi^{-1}(1)
\quad \Longleftrightarrow \quad
\psi^{-1}(0) \subseteq \varphi^{-1}(0).\]
Then $\mathcal{L}(\mathbb{P})=(\mpm{\mathbb{P}}{\twoT_\T},\leqslant)$ is a bounded lattice.
\end{prop}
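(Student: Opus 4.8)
The plan is to use the preliminary fact that \emph{every maximal partial morphism of $\mathbb{P}$ is already a maximal partial $E$-preserving map}, so that the identities of Lemma~\ref{lem:Plos1.3} apply to it, and then to exhibit the bounds and the binary operations of $\mathcal{L}(\mathbb{P})$ by hand, using axiom~(3) to keep the relevant sets closed. For the preliminary fact, fix $\varphi\in\mpm{\mathbb{P}}{\twoT_\T}$; I claim $\varphi^{-1}(0)=\ECR(\varphi^{-1}(1))$ and $\varphi^{-1}(1)=\ECL(\varphi^{-1}(0))$. The inclusions ``$\subseteq$'' are immediate from $E$-preservation and reflexivity of $E$. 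For ``$\supseteq$'' in the first identity, let $x\in\ECR(\varphi^{-1}(1))$, so no element of $\varphi^{-1}(1)$ lies $E$-below $x$; if $x\notin\dom\varphi$ then $\varphi\cup\{(x,0)\}$ is still a partial morphism (it preserves $E$ since no $u\in\varphi^{-1}(1)$ has $(u,x)\in E$, its fibres $\varphi^{-1}(0)\cup\{x\}$ and $\varphi^{-1}(1)$ are closed, and so is its domain $\dom\varphi\cup\{x\}$), which properly extends $\varphi$ and contradicts maximality; hence $x\in\dom\varphi$, and $\varphi(x)=1$ is impossible because $(x,x)\in E$, so $x\in\varphi^{-1}(0)$. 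The second identity is dual. Here closedness of $\{x\}$ uses that $\T$ is $T_1$, which follows because by Lemma~\ref{lem:antisym} distinct points $x\ne y$ satisfy $yE\not\subseteq xE$ or $Ey\not\subseteq Ex$, so axiom~(2) (applied in the appropriate order) produces an MPM separating them. Granting the claim, the two conditions defining $\le$ are equivalent (apply $\ECR$, resp.\ $\ECL$, to an inclusion of $1$-preimages, resp.\ $0$-preimages), $\le$ is reflexive and transitive by inspection, and antisymmetric by Lemma~\ref{lem:Lemma2-equiv}(iii); so $\le$ is a partial order.

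For the bounds, the constant partial maps $X\to\{1\}$ and $X\to\{0\}$ are total, hence $E$-preserving and continuous with closed domain $X$, and maximal because total; so they lie in $\mpm{\mathbb{P}}{\twoT_\T}$, and as their $1$-preimages are $X$ and $\varnothing$ they are respectively the greatest and least elements of $(\mpm{\mathbb{P}}{\twoT_\T},\le)$.

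For the binary operations, given $\varphi,\psi\in\mpm{\mathbb{P}}{\twoT_\T}$ put $Y:=\varphi^{-1}(1)\cap\psi^{-1}(1)$ and $Z:=\varphi^{-1}(0)\cap\psi^{-1}(0)$, and define partial maps $\theta,\sigma$ by $\theta^{-1}(1)=Y$, $\theta^{-1}(0)=\ECR(Y)$, $\sigma^{-1}(0)=Z$, $\sigma^{-1}(1)=\ECL(Z)$, and undefined elsewhere. Reflexivity of $E$ makes the two fibres of each map disjoint; a short check against the definitions of $\ECR$ and $\ECL$ shows each preserves $E$; and the domains are closed, since $Y,Z$ are intersections of closed sets while $\ECR(Y),\ECL(Z)$ are closed by axiom~(3). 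Thus $\theta$ and $\sigma$ are partial morphisms. To see $\theta$ is maximal, suppose a partial morphism $\theta'$ properly extends it and pick $x\in\dom\theta'\setminus\dom\theta$; if $\theta'(x)=1$ then $x\notin Y$, say $x\notin\varphi^{-1}(1)=\ECL(\varphi^{-1}(0))$, so some $y\in\varphi^{-1}(0)$ has $(x,y)\in E$, and since $\varphi^{-1}(0)=\ECR(\varphi^{-1}(1))\subseteq\ECR(Y)=\theta^{-1}(0)$ we get $y\in\dom\theta\subseteq\dom\theta'$ with $\theta'(y)=0$, contradicting $E$-preservation of $\theta'$ on $(x,y)$; the case $\theta'(x)=0$, and the whole argument for $\sigma$, are symmetric. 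Hence $\theta,\sigma\in\mpm{\mathbb{P}}{\twoT_\T}$, and since $\theta^{-1}(1)=\varphi^{-1}(1)\cap\psi^{-1}(1)$ and $\sigma^{-1}(0)=\varphi^{-1}(0)\cap\psi^{-1}(0)$, the definition of $\le$ shows at once that $\theta=\varphi\wedge\psi$ and $\sigma=\varphi\vee\psi$. Therefore $\mathcal{L}(\mathbb{P})$ is a bounded lattice.

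The main obstacle is the preliminary step that maximal partial morphisms are maximal partial $E$-preserving maps: this is the point where the topology enters, via the $T_1$ property deduced from axiom~(2) and Lemma~\ref{lem:antisym}, and it is what makes $\le$ well defined and drives the maximality arguments for $\theta$ and $\sigma$. Checking that those two explicitly constructed maps admit no proper extension is the other spot requiring genuine, if modest, care rather than being routine.
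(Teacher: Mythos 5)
Your proof is correct and follows essentially the same route as the paper: the constant maps give the bounds, and your $\theta$ and $\sigma$ are exactly the paper's $M_{\varphi,\psi}$ and $J_{\varphi,\psi}$, with axioms (3) and (4) supplying closedness of their fibres. You additionally verify two points the paper leaves implicit --- that maximal partial morphisms satisfy the fibre identities of Lemma~\ref{lem:Plos1.3} (via the $T_1$ property extracted from axiom (2) and Lemma~\ref{lem:antisym}), and the explicit maximality of $\theta$ and $\sigma$ --- and both of these checks are sound.
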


\begin{proof}
Clearly the constant maps $\varphi_1(x)=1$ and $\varphi_0(x)=0$ are MPMs and are, respectively, the greatest and least element of $\mathcal{L}(\mathbb{P})$.

For $\varphi, \psi \in \mpm{\mathbb{P}}{\twoT_\T}$ we define the maps
\[M_{\varphi,\psi}(x) =
\begin{cases} 1\quad &\text{if }\ x\in \varphi^{-1}(1)\cap \psi^{-1}(1),\\
             0\quad &\text{if } x \in E_{\triangleright}^{\complement}\left(\varphi^{-1}(1)\cap \psi^{-1}(1)\right)
\end{cases}\]
and
$$
J_{\varphi,\psi}(x) =
\begin{cases} 1\quad &\text{if }
x \in E^{\complement}_{\triangleleft}\left(
\varphi^{-1}(0)\cap \psi^{-1}(0) \right),
\\
             0\quad &\text{if }
x \in \varphi^{-1}(0)\cap \psi^{-1}(0).
\end{cases}
$$
The maps $M_{\varphi,\psi}$ and $J_{\varphi,\psi}$ are defined such that it can easily be shown that they preserve the relation 
$E$. The fact they are continuous, i.e. their domains are closed, is guaranteed by conditions (3) and (4) of Definition~\ref{def:ploscicaspaces}. Indeed, (3) guarantees that $M_{\varphi,\psi}^{-1}(0)$ and $J_{\varphi,\psi}^{-1}(1)$ are closed, while (4) yields that $M_{\varphi,\psi}^{-1}(1)$ and $J_{\varphi,\psi}^{-1}(0)$ are closed. Hence $M_{\varphi,\psi}(x)$ and $J_{\varphi,\psi}(x)$ are elements of $\mpm{\mathbb{P}}{\twoT_\T}$.

By the definition of the ordering  on $\mpm{\mathbb{P}}{\twoT_\T}$ it is clear that $M_{\varphi,\psi}$ and $J_{\varphi,\psi}$ are the greatest lower bound and the least upper bound for $\varphi$ and  $\psi$, respectively. So $M_{\varphi,\psi} = \varphi \wedge \psi$ and $J_{\varphi,\psi} = \varphi \vee \psi$.
\end{proof}

For the theorem below, see \cite[Theorem 1]{U78} or \cite[Theorem 1.7]{Plos95}.

\begin{thm}\label{thm:rep-for-L}
Let $\Lalg$ be a bounded lattice. Then $\Lalg \cong \mathcal{L}(\mathcal{D}(\Lalg))$.
\end{thm}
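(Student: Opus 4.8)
The plan is to establish the isomorphism through the evaluation map $e_\Lalg\colon a\mapsto e_a$ of Proposition~\ref{lem-eval}; indeed the present statement is in essence a reformulation of that proposition in the language of Plo\v{s}\v{c}ica spaces. First I would record that $\mathcal{D}(\Lalg)$ is a Plo\v{s}\v{c}ica space (as established above), so that Proposition~\ref{prop:L(S)boundedlattice} guarantees that $\mathcal{L}(\mathcal{D}(\Lalg))=(\mpm{\mathcal{D}(\Lalg)}{\twoT_\T},\leqslant)$ really is a bounded lattice, with underlying ordered set exactly the one named in Proposition~\ref{lem-eval}(iii). By Proposition~\ref{lem-eval}(i) the map $e_\Lalg$ sends $L$ into $\mpm{\mathcal{D}(\Lalg)}{\twoT_\T}$, and by Proposition~\ref{lem-eval}(ii) it is onto.

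It then remains to check that $e_\Lalg$ is an order-isomorphism onto its image; an order-isomorphism whose domain is a lattice automatically preserves $\wedge$ and $\vee$ (these being determined by the order), so this will in particular show that the operations on $\mathcal{L}(\mathcal{D}(\Lalg))$ furnished by Proposition~\ref{prop:L(S)boundedlattice} are the images of those of $\Lalg$. Unwinding the definition of $\leqslant$ on $\mpm{\mathcal{D}(\Lalg)}{\twoT_\T}$, one has $e_a\leqslant e_b$ precisely when $\{f\in P_\Lalg\mid f(a)=1\}\subseteq\{f\in P_\Lalg\mid f(b)=1\}$. If $a\leqslant b$, this inclusion holds because for each MPH $f$ the set $f^{-1}(1)$ is a filter of $\Lalg$ (by the MPH--MDFIP correspondence recalled in Section~\ref{sec:prelim}), hence an up-set, so $f(a)=1$ forces $f(b)=1$. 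Conversely, if $a\nleqslant b$ then ${\uparrow}a\cap{\downarrow}b=\varnothing$, so $\langle{\uparrow}a,{\downarrow}b\rangle$ is a disjoint filter-ideal pair; extending it by Zorn's Lemma to an MDFIP and passing to the corresponding MPH $f\in P_\Lalg$ yields $f(a)=1$ while $f(b)=0\neq 1$, whence $e_a\nleqslant e_b$. Injectivity of $e_\Lalg$ is then immediate. (Alternatively, one can verify the homomorphism property directly, using that $f^{-1}(1)$ is upward closed to get $e_{a\wedge b}^{-1}(1)=e_a^{-1}(1)\cap e_b^{-1}(1)$, and dually for $\vee$ via the ideal $f^{-1}(0)$.)

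The only step here that is not essentially bookkeeping is surjectivity of $e_\Lalg$, i.e.\ the claim that every maximal partial morphism from $\mathcal{D}(\Lalg)$ to $\twoT_\T$ is an evaluation $e_a$; this is exactly the Urquhart--Plo\v{s}\v{c}ica representation theorem and is quoted above as Proposition~\ref{lem-eval}(ii), so in the present setting there is nothing more to do, whereas in a self-contained treatment reproving it would be the main obstacle. Assembling the pieces, $e_\Lalg\colon\Lalg\to\mathcal{L}(\mathcal{D}(\Lalg))$ is a bijection that is order-preserving and order-reflecting between lattices, hence a lattice isomorphism, which is the assertion.
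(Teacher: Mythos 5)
Your proposal is correct and follows essentially the same route as the paper: the evaluation map $a\mapsto e_a$, a Zorn's Lemma extension of the pair $\langle{\uparrow}a,{\downarrow}b\rangle$ to witness $e_a\nleqslant e_b$ when $a\nleqslant b$, and Proposition~\ref{lem-eval}(ii) for surjectivity. Your handling of the converse direction (using that $f^{-1}(1)$ is an up-set, rather than asserting $e_b(g)=0$ as the paper does) is if anything slightly cleaner, but the argument is the same in substance.
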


\begin{proof}
We show that the map $\nu: \Lalg \to \mathcal{L}(\mathcal{D}(\Lalg))$ given by $\nu (a)= e_a$ is an isomorphism, where the evaluation map $e_a$ was defined in Proposition~\ref{lem-eval}. If we have $a \nleqslant b$, then the partial homomorphism $f$ with $f^{-1}(1)={\uparrow}a$ and $f^{-1}(0)={\downarrow} b$ can be extended, by Zorn's Lemma, to an MPH $\bar{f}$. Then $e_a(\bar{f})=1\nleqslant 0 = e_b(\bar{f})$, hence $e_a\nleqslant e_b$. Conversely, if $e_a\nleqslant e_b$, then there exists some MPH $g$ such that  $e_a(g)=1$ and $e_b(g)=0$. Hence $a\nleqslant b$. 

The fact that every MPM $\varphi \in \mpm{\mathcal{D}(\Lalg)}{\twoT_\T}$ is of the form $e_a$ (i.e. that $\nu$ is onto)  follows from Proposition~\ref{lem-eval}(ii). 
\end{proof}

For two Plo\v{s}\v{c}ica spaces $\mathbb{P}_1$ and $\mathbb{P}_2$, we write $\mathbb{P}_1 \cong \mathbb{P}_2$ to indicate they are \emph{digraph-homeomorphic} to one another. That is, there exists $\vartheta: X_1 \to X_2$ such that $xE_1y$ iff $\vartheta(x)E_2 \vartheta(y)$ and $\vartheta$ is a homeomorphism. The lemma below defines such a $\vartheta$ from $\mathbb{P}$ to $\mathcal{D}(\mathcal{L}(\mathbb{P}))$.

\begin{lem}\label{lem:defn-v}
Let $\mathbb{P}=(X,E,\T)$ be a Plo\v{s}\v{c}ica space. For $x \in X$, define a partial map $\varepsilon_x$ from $\mathcal{L}(\mathbb{P})$ to $\twoB$ such that for $\varphi \in \mpm{\mathbb{P}}{\twoT_\T}$
\[\varex(\varphi) = \begin{cases}
    \varphi(x) & \text{ if } x \in\dom\varphi, \\
    - & \text{otherwise.}
\end{cases}\]

\noindent 
Then $\varex \in \mph{\mathcal{L}(\mathbb{P})}{\twoB}$.
\end{lem}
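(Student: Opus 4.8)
The plan is to verify that $\varex$ is a maximal partial homomorphism from the lattice $\mathcal{L}(\mathbb{P})=(\mpm{\mathbb{P}}{\twoT_\T},\leqslant)$ into $\twoB$, which splits into three tasks: (a) $\dom\varex$ is a bounded sublattice of $\mathcal{L}(\mathbb{P})$; (b) $\varex$ restricted to $\dom\varex$ is a bounded-lattice homomorphism; (c) $\varex$ has no proper extension. For (a) and (b) I would work directly with the explicit descriptions of $\varphi\wedge\psi=M_{\varphi,\psi}$ and $\varphi\vee\psi=J_{\varphi,\psi}$ from the proof of Proposition~\ref{prop:L(S)boundedlattice}. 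First, note $x$ lies in the domain of the constant maps $\varphi_1,\varphi_0$ (which are the bounds of $\mathcal{L}(\mathbb{P})$) and $\varex(\varphi_1)=1$, $\varex(\varphi_0)=0$, so the bounds are handled. Next, suppose $\varphi,\psi\in\dom\varex$, i.e.\ $x\in\dom\varphi\cap\dom\psi$. I would do a case analysis on the four possible values of $(\varphi(x),\psi(x))$ to show $x\in\dom(\varphi\wedge\psi)$ and $x\in\dom(\varphi\vee\psi)$, and that the values agree with $\min$ and $\max$ respectively. The key observations are: if $x\in\varphi^{-1}(1)\cap\psi^{-1}(1)$ then $M_{\varphi,\psi}(x)=1$; if $x\in\varphi^{-1}(0)\cap\psi^{-1}(0)$ then $J_{\varphi,\psi}(x)=0$; and reflexivity of $E$ gives $\varphi^{-1}(1)\subseteq E^\complement_\triangleleft(\varphi^{-1}(0))$ and $\varphi^{-1}(0)\subseteq E^\complement_\triangleright(\varphi^{-1}(1))$ (since $x\in\varphi^{-1}(1)$ forces $(z,x)\notin E$ for all $z\in\varphi^{-1}(0)$ by Lemma~\ref{lem:Plos1.3}, and dually). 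These containments let me place $x$ in the relevant domains in the mixed cases, e.g.\ $\varphi(x)=1$, $\psi(x)=0$ gives $x\in\varphi^{-1}(0)\cup\psi^{-1}(0)$-membership facts forcing $M_{\varphi,\psi}(x)=0$ and $J_{\varphi,\psi}(x)=1$.

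For the maximality claim (c), I would argue by contradiction: suppose $f\in\mph{\mathcal{L}(\mathbb{P})}{\twoB}$ properly extends $\varex$, so there is some $\chi\in\mpm{\mathbb{P}}{\twoT_\T}$ with $\chi\notin\dom\varex$ (i.e.\ $x\notin\dom\chi$) but $\chi\in\dom f$. By Proposition~\ref{lem-eval}-style reasoning this is where I would instead use the concrete structure: since $x\notin\dom\chi$, neither $x\in\chi^{-1}(1)$ nor $x\in\chi^{-1}(0)$. If $f(\chi)=1$, then consider an MPM witnessing that $x$ can be "pushed below" $\chi^{-1}(1)$ appropriately; more precisely, by Lemma~\ref{lem:Plos1.3}(ii), $x\notin\chi^{-1}(1)$ means there is $y\in\chi^{-1}(0)$ with $(x,y)\in E$, and by (Ti) there is $z$ with $zE\subseteq xE$ and $Ez\subseteq Ey$. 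Then $y\in\chi^{-1}(0)$ and $Ez\subseteq Ey$ give $z\in\chi^{-1}(0)$ by Lemma~\ref{lem:Ex-xE-phi}(ii); meanwhile $x\notin\chi^{-1}(0)$. I would then invoke the doubly-disconnectedness (Definition~\ref{def:ploscicaspaces}(2)) applied to the pair $x,z$ — since $Ez\subseteq Ey$ and one checks $z\ne x$ with $zE\subseteq xE$ and (Ti)-type separation, there is an MPM $\theta$ separating them with $x\in\dom\theta$, $z$ handled the other way — to produce a map in $\mpm{\mathbb{P}}{\twoT_\T}$ on which $f$ and $\varex$ disagree with an order-compatibility obtained from $\chi$, contradicting that $f$ is a homomorphism extending $\varex$.

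The main obstacle I anticipate is the maximality step (c): the homomorphisms in $\mph{\mathcal{L}(\mathbb{P})}{\twoB}$ are abstract, so to derive a contradiction from a putative proper extension I must translate "$f(\chi)=1$ but $x\notin\dom\chi$" back into the topological/digraph structure of $\mathbb{P}$ and then exploit condition (2) of Definition~\ref{def:ploscicaspaces}. Concretely, the delicate point is constructing, from the witness $\chi$, another MPM $\theta$ with $\theta\leqslant\chi$ (or $\theta\geqslant\chi$) in $\mathcal{L}(\mathbb{P})$ such that $x\in\dom\theta$ and $\varex(\theta)$ forces, via the homomorphism property of $f$, that $f(\chi)$ must equal $\varex$ would-be value — i.e.\ that every MPM is squeezed between ones in $\dom\varex$ tightly enough that $f$ is forced to agree with $\varex$ wherever defined. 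This is exactly where the combination of (Ti), reflexivity, Lemma~\ref{lem:Ex-xE-phi}, Lemma~\ref{lem:Lemma2-equiv}, and the doubly-disconnectedness axiom all come into play; getting the directions and the separating MPM exactly right is the technical heart of the argument, and I would expect to need a short auxiliary observation (of the flavour of Lemma~\ref{lem:Lemma2-equiv}) stating that for any $\chi$ with $x\notin\dom\chi$ there exist $\theta,\theta'\in\dom\varex$ with $\theta\leqslant\chi\leqslant\theta'$ and $\varex(\theta)=0$, $\varex(\theta')=1$, which immediately forbids any consistent value $f(\chi)$.
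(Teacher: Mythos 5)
Your treatment of the partial-homomorphism part is fine: the case analysis on $(\varphi(x),\psi(x))$ using the explicit formulas for $M_{\varphi,\psi}$ and $J_{\varphi,\psi}$ together with Lemma~\ref{lem:Plos1.3} does establish that $\dom\varex$ is a bounded sublattice on which $\varex$ is a homomorphism. This is a slightly different (and somewhat more laborious) route than the paper, which instead verifies directly that $\varex^{-1}(0)$ is an ideal and $\varex^{-1}(1)$ is a filter of $\mathcal{L}(\mathbb{P})$; the two are equivalent, since the union of a disjoint filter and ideal is automatically a sublattice.

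The maximality step, however, has a genuine gap, and it is twofold. First, the auxiliary observation you propose to close the argument --- that for $\chi$ with $x\notin\dom\chi$ there exist $\theta,\theta'\in\dom\varex$ with $\theta\leqslant\chi\leqslant\theta'$, $\varex(\theta)=0$ and $\varex(\theta')=1$ --- forbids nothing: both $f(\chi)=0$ and $f(\chi)=1$ are order-compatible with being sandwiched between a $0$-valued element below and a $1$-valued element above, so no contradiction with $f$ being a homomorphism arises. What would actually rule out $f(\chi)=1$ is an element of $\varex^{-1}(0)$ lying in the \emph{filter generated by} $\varex^{-1}(1)\cup\{\chi\}$, i.e.\ some $\psi\geqslant\varphi_1\wedge\dots\wedge\varphi_n\wedge\chi$ with all $\varphi_i(x)=1$ and $\psi(x)=0$. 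Second, and more fundamentally, producing such an element cannot be done by a local argument from a single witness $\chi$ via (Ti), Lemma~\ref{lem:Ex-xE-phi} and doubly-disconnectedness, because the obstruction is global: one must show that \emph{every} filter $F$ properly containing $\varex^{-1}(1)$ meets $\varex^{-1}(0)$. The paper does this by applying doubly-disconnectedness pointwise to the closed set $\bigcap\{\psi^{-1}(1)\mid\psi\in F\}$ (every $z$ in it satisfies $zE\subseteq xE$, hence $Ez\nsubseteq Ex$, yielding a separating $\psi_z$ with $\psi_z(x)=0$), and then invoking the \emph{compactness} of $(X,\T)$ twice to replace the resulting infinite covers by finite ones, so that the relevant joins and meets exist inside $\mathcal{L}(\mathbb{P})$ and Lemma~\ref{lem:Lemma2-equiv}(i) can be applied to land an element $\overline{\psi}$ in $F\cap\varex^{-1}(0)$. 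Compactness --- one of the defining conditions of a Plo\v{s}\v{c}ica space --- is the essential ingredient here, and it appears nowhere in your sketch of the maximality step.
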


\begin{proof}
To show that $\varepsilon_x$ is a partial homomorphism, it suffices to prove that $\varex^{-1}(0)$ is an ideal and $\varex^{-1}(1)$ is a filter. 
We will show only that $\varex^{-1}(0)$ is an ideal, the fact that $\varex^{-1}(1)$ is a filter will follow by a dual argument. Let $\varphi \in \varex^{-1}(0)$ and $\psi \leqslant \varphi$. Then $\varphi(x)=0$ and since $\varphi^{-1}(0)\subseteq \psi^{-1}(0)$, we get $\psi(x)=0$ so $\psi \in \varex^{-1}(0)$. Now let $\varphi,\psi \in \varex^{-1}(0)$. Then $\varphi(x)=\psi(x)=0$, so $x \in \varphi^{-1}(0)\cap \psi^{-1}(0)=(\varphi \vee \psi)^{-1}(0)$, thus $\varphi \vee \psi \in \varex^{-1}(0)$. Hence $\varex$ is a partial homomorphism.

Now we show that the domain of $\varex$ is maximal. Suppose there exists a filter $F \subseteq \mpm{\mathbb{P}}{\twoT_\T}$ properly containing $\varex^{-1}(1)$ and disjoint from $\varex^{-1}(0)$. We notice that $\varex^{-1}(1) = \{\varphi \in  \mpm{\mathbb{P}}{\twoT_\T} \mid \varphi(x)=1\}$.
For the rest of the proof, we let $Y=\bigcap \{ \psi^{-1}(1) \mid \psi \in F \} $. 
Then we have 
$$ Y \subseteq \bigcap \{\varphi^{-1}(1) \mid \varphi(x)=1\}$$ 
and there exists $\psi \in F$ with $\psi(x) \neq 1$. Hence $x\notin Y$. Let us fix an element $z\in Y$. We claim that then $zE \subseteq xE$. 

If $zE \not\subseteq xE$, then by Lemma~\ref{lem:up/down set separation} there exists $\varphi \in \mpm{\mathbb{P}}{\twoT_\T}$ such that $\varphi(x)=1$ and $\varphi(z)\neq 1$, a contradiction.

We have that $z\ne x$ since $x\notin Y$. By Lemma~\ref{lem:antisym} we get $Ez \nsubseteq Ex$ and by the total edge-disconnectedness and Lemma~\ref{lem:up/down set separation} there exists $\psi_z\in\mpm{\mathbb{P}}{\twoT_\T}$ such that $\psi_z(x)=0$ and $\psi_z(z)\neq 0$, thus $z\in X{\setminus}  \psi_z^{-1}(0)$.

Notice now that we have shown above
$$ Y \subseteq \bigcup \left\{\, X{\setminus}\psi_z^{-1}(0) 
\mid z\in Y 
\,\right\}. $$
By applying the compactness of $\mathbb{P}$, we obtain
$$ Y \subseteq \bigcup_{i=1}^n X{\setminus} \psi_{z_i}^{-1}(0) \quad (\ast)$$
for some elements $z_1,\dots,z_n \in Y$. 

Let us define $\overline{\psi}:= \bigvee \{ \psi_{z_i} \mid 1 \leq i \leq n \}$. Consider taking complements of the set containment in $(\ast)$. Then, by applying the definition of the join, we obtain
$$
\overline{\psi}^{-1}(0) = \bigcap_{i=1}^n \psi_{z_i}^{-1}(0) \subseteq \bigcup \{X \setminus \psi^{-1}(1) \mid \psi \in F \,\}. \quad (\ast\ast)
$$
By applying the compactness of $\mathbb{P}$ again, we get for some $m$:
$$
\overline{\psi}^{-1}(0) \subseteq \bigcup_{j=1}^m \left\{\,X \setminus \psi_j^{-1}(1) \mid \psi_j \in F \,\right\} = X \setminus \bigcap_{j=1}^m \left\{\,\psi_j^{-1}(1) \mid \psi_j \in F \,\right\}.$$
We now define $\hat{\psi}:= \bigwedge \{ \psi_j \mid 1 \leq j \leq m\}$. Then $\overline{\psi}^{-1}(0) \subseteq X \setminus \hat{\psi}^{-1}(1)$. 
By Lemma~\ref{lem:Lemma2-equiv}(i) we get that  ${\overline{\psi}}^{-1}(0) \subseteq {\hat{\psi}}^{-1}(0)$. By the definition of the order in the lattice $\mathcal{L}(\mathbb{P})$ it follows that $ \hat{\psi}\leq \overline{\psi} $.

Since $F$ is a filter in the lattice $\mathcal{L}(\mathbb{P})$ and $\psi_1,\dots,\psi_m\in F$, we have $\hat{\psi} \in F$. It follows that $\overline{\psi} \in F$. Now by $(\ast\ast)$ and the fact that 
$\psi_{z_i}(x)=0$ for all $1 \leq i \leq n$,  we get  $\overline{\psi}(x)=0$. 
It follows that $\overline{\psi} \in F \cap \varex^{-1}(0)$, which contradicts that $F$ and $\varex^{-1}(0)$ are disjoint.

We have shown that there is no filter $F$ properly containing $\varex^{-1}(1)$ and disjoint with $\varex^{-1}(0)$. One can show similarly that there is no ideal properly containing $\varex^{-1}(0)$ and disjoint with $\varex^{-1}(1)$. Hence $\mdfip{\varex^{-1}(1)}{\varex^{-1}(0)}$ 
is an MDFIP, proving the maximality of the partial homomorphism $\varex$.
\end{proof}

Lemma~\ref{lem:defn-v} allows us to define $\vartheta : \mathbb{P} \to \mathcal{D} (\mathcal{L}(\mathbb{P}))$ by $\vartheta (x)=\varepsilon_x$. We use the map $\vartheta$ to show that a Plo\v{s}\v{c}ica space is digraph-homeomorphic to its second dual.

\begin{thm}\label{thm:graph-homeomorphic}
Let $\mathbb{P}=(X,E,\T)$ be a Plo\v{s}\v{c}ica space. Then
$\mathbb{P} \cong \mathcal{D} (\mathcal{L}(\mathbb{P}))$.
\end{thm}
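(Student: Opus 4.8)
The plan is to verify that the map $\vartheta\colon\mathbb P\to\mathcal D(\mathcal L(\mathbb P))$ given by $\vartheta(x)=\varepsilon_x$ is a digraph-homeomorphism, i.e.\ that it is (a) a bijection, (b) a homeomorphism, and (c) relation-preserving in both directions: $xEy \iff \varepsilon_x E \varepsilon_y$. Lemma~\ref{lem:defn-v} already guarantees that each $\varepsilon_x$ is indeed an element of $\mph{\mathcal L(\mathbb P)}{\twoB}=P_{\mathcal L(\mathbb P)}$, so $\vartheta$ is well-defined.

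First I would handle injectivity. If $x\neq y$, then by Lemma~\ref{lem:antisym} we have $yE\nsubseteq xE$ or $Ey\nsubseteq Ex$; by doubly-disconnectedness (condition (2) of Definition~\ref{def:ploscicaspaces}) there is an MPM $\varphi$ with $\varphi(x)=1$, $\varphi(y)\neq1$ (or dually one separating them at $0$), and this $\varphi$ witnesses $\varepsilon_x\neq\varepsilon_y$. For the relation in the forward direction, suppose $(x,y)\in E$; I want $\varepsilon_x E\varepsilon_y$, which by (E1) means: for every $\varphi$ with $x,y\in\dom\varphi$, $\varepsilon_x(\varphi)\le\varepsilon_y(\varphi)$, i.e.\ $\varphi(x)\le\varphi(y)$ — but this is exactly the statement that $\varphi$ preserves $E$, so it holds. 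For the converse, suppose $(x,y)\notin E$; I must produce $\varphi\in\mpm{\mathbb P}{\twoT_\T}$ with $x,y\in\dom\varphi$, $\varphi(x)=1$, $\varphi(y)=0$. Here I would use condition (3) of Definition~\ref{def:ploscicaspaces} applied to the constant MPMs: the set $J_{\varphi_1,\varphi_1}$-type construction, or more directly, take $\varphi$ to be the MPM $M_{\varphi,\psi}$ or $J_{\varphi,\psi}$ built in Proposition~\ref{prop:L(S)boundedlattice} from appropriate pieces — concretely one wants the map whose $1$-set is $E^\complement_\triangleleft(E^\complement_\triangleright\{y\})$ or similar, using that $x\in E^\complement_\triangleright\{?\}$ precisely because $(x,y)\notin E$. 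I expect this to require a careful choice: take $\varphi$ with $\varphi^{-1}(1)=E^\complement_\triangleleft E^\complement_\triangleright(\{x\}^{\triangleright\triangleleft})$ — essentially the MPM generated "below" $x$ — and check $x\in\varphi^{-1}(1)$, $y\in\varphi^{-1}(0)$, using the polar descriptions from Lemma~\ref{lem:Plos1.3} together with $(x,y)\notin E$.

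Next comes surjectivity of $\vartheta$. Given $g\in P_{\mathcal L(\mathbb P)}=\mph{\mathcal L(\mathbb P)}{\twoB}$, the pair $\mdfip{g^{-1}(1)}{g^{-1}(0)}$ is an MDFIP of the lattice $\mathcal L(\mathbb P)$. I would show there is $x\in X$ with $g=\varepsilon_x$ by locating $x$ in the intersection $\bigcap\{\varphi^{-1}(1)\mid\varphi\in g^{-1}(1)\}\setminus\bigcup\{\varphi^{-1}(1)\mid\varphi\in g^{-1}(0)\}$, or rather in $\bigcap\{\varphi^{-1}(1):\varphi\in g^{-1}(1)\}\cap\bigcap\{\varphi^{-1}(0):\varphi\in g^{-1}(0)\}$; compactness of $\mathbb P$ (condition (1)) and the subbase condition (4) make this intersection nonempty provided every finite subfamily has nonempty intersection, and finite intersections reduce via the lattice operations $\varphi\wedge\psi$, $\varphi\vee\psi$ of Proposition~\ref{prop:L(S)boundedlattice} to a single $\mdfip{}{}$ which is nonempty because $g$ is a homomorphism (its kernel classes are consistent). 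Then for such $x$ one checks $\varepsilon_x=g$: clearly $g^{-1}(1)\subseteq\varepsilon_x^{-1}(1)$ and $g^{-1}(0)\subseteq\varepsilon_x^{-1}(0)$ by choice of $x$, and maximality of the MDFIP $\mdfip{g^{-1}(1)}{g^{-1}(0)}$ (together with Lemma~\ref{lem:defn-v} giving that $\varepsilon_x$ is itself an MPH, so $\mdfip{\varepsilon_x^{-1}(1)}{\varepsilon_x^{-1}(0)}$ is an MDFIP containing it) forces equality.

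Finally, continuity: the topology on $\mathcal D(\mathcal L(\mathbb P))$ has subbasic closed sets $V_\varphi=\{g\mid g(\varphi)=0\}$ and $W_\varphi=\{g\mid g(\varphi)=1\}$ for $\varphi\in\mathcal L(\mathbb P)$, and $\vartheta^{-1}(V_\varphi)=\{x\mid\varepsilon_x(\varphi)=0\}=\varphi^{-1}(0)$, $\vartheta^{-1}(W_\varphi)=\varphi^{-1}(1)$, which are $\T$-closed since $\varphi$ is an MPM (its domain is closed and $\twoT_\T$ is discrete); so $\vartheta$ is continuous. It is a continuous bijection from a compact space, and since $\mathcal D(\mathcal L(\mathbb P))$ is $\mathrm T_1$ (hence points are closed and, with compactness of $\mathbb P$, one gets that $\vartheta$ is closed) $\vartheta$ is a homeomorphism; combined with the relation-preservation in both directions this gives $\mathbb P\cong\mathcal D(\mathcal L(\mathbb P))$. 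The main obstacle I anticipate is the converse relation-preservation step — manufacturing, from $(x,y)\notin E$ alone, an honest MPM separating $x$ and $y$ in the required direction — since it is the one place where the abstract conditions (3)–(4) of a Plo\v s\v cica space must be combined with the TiRS axioms and the polar calculus of Lemma~\ref{lem:Plos1.3}, rather than following formally from compactness or the already-established lattice structure.
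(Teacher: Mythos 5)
Most of your proposal tracks the paper's proof: well-definedness via Lemma~\ref{lem:defn-v}, injectivity from Lemma~\ref{lem:antisym} plus doubly-disconnectedness, surjectivity by the FIP/compactness argument on the closed sets $\varphi^{-1}(1)$, $\varphi^{-1}(0)$ with Lemma~\ref{lem:Lemma2-equiv}(ii) supplying the nonemptiness of finite intersections, and continuity by computing $\vartheta^{-1}(V_\varphi)=\varphi^{-1}(0)$. The genuine gap is exactly where you predicted it: the converse relation-preservation step. Your plan is to manufacture, from $(x,y)\notin E$, an element of $\mpm{\mathbb{P}}{\twoT_\T}$ whose $1$-set is a polar of the singleton $\{x\}$, something like $\ECL(\ECR(\{x\}))$. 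Set-theoretically such a partial map is $E$-preserving and separates $x$ from $y$, but nothing in Definition~\ref{def:ploscicaspaces} certifies that it is a \emph{maximal partial morphism}: conditions (3) and (4) only guarantee closedness of sets built from maps that are \emph{already} in $\mpm{\mathbb{P}}{\twoT_\T}$, so you cannot conclude that $\ECR(\{x\})$ or $\ECL(\ECR(\{x\}))$ is $\T$-closed, and a Zorn-style maximal $E$-preserving extension need not have closed domain. The tool you are missing is axiom 2(a) itself: since $E$ is reflexive, $(x,y)\notin E$ gives $y\in yE\setminus xE$, hence $yE\nsubseteq xE$, and 2(a) then hands you $\varphi\in\mpm{\mathbb{P}}{\twoT_\T}$ with $\varphi(x)=1$ and $\varphi(y)\neq 1$; this is how the paper refutes $(\varex,\varepsilon_y)\in E$. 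Note also that the theorem only needs the weaker conclusion $(\varex,\varepsilon_y)\notin E$, not an MPM with $\varphi(x)=1$ and $\varphi(y)=0$ --- the latter, which is what you set out to build, is extracted only afterwards as Corollary~\ref{RTP:lem}.

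A secondary flaw: your closing claim that $\vartheta$ is a homeomorphism because it is a continuous bijection from a compact space onto a $\mathrm{T}_1$ space is not valid ($\mathrm{T}_1$ does not make compact subsets closed; that needs Hausdorff, which is not among the hypotheses). The correct route is already implicit in your preimage computation: by condition (4) the sets $\varphi^{-1}(0)$ and $\varphi^{-1}(1)$ for $\varphi\in\mpm{\mathbb{P}}{\twoT_\T}$ are precisely the subbasic closed sets of $\mathbb{P}$, and the bijection $\vartheta$ carries them onto the subbasic closed sets $V_\varphi$ and $W_\varphi$ of $\mathcal{D}(\mathcal{L}(\mathbb{P}))$, so both $\vartheta$ and $\vartheta^{-1}$ are continuous.
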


\begin{proof}
To show that $\vartheta$ is a digraph homeomorphism we show the following:
\begin{enumerate}[{\upshape (i)}]
\item For all $x,y \in X$,  
$(x,y) \in E$ iff $(\varex,\varepsilon_y) \in E$. 
\item If $x \neq y$ then $\varex \neq \varepsilon_y$. 
\item For all $f \in \mph{\mathcal{L}(\mathbb{P})}{\twoB}$ there exists $x \in X$ such that $\varex=f$. 
\item $\vartheta$ and $\vartheta^{-1}$ are continuous. 
\end{enumerate}

If $(x,y)\in E$, then by Lemma~\ref{lem:Plos1.3} we have
\begin{align*}
&(\forall \varphi \in \mathcal{L}(\mathbb{P}))(\varphi(x)=1 \Longrightarrow \varphi(y) \neq 0)\\ \Longleftrightarrow \quad &(\forall \varphi \in \mathcal{L}(\mathbb{P}))(\varex(\varphi)=1 \Longrightarrow \varepsilon_y(\varphi) \neq 0)\\
 \Longleftrightarrow \quad
&\varex^{-1}(1) \cap \varepsilon_y^{-1}(0) = \emptyset \\
 \Longleftrightarrow \quad & (\varex,\varepsilon_y) \in E.
\end{align*}
For the converse, assume $(x,y) \notin E$. By the total edge-disconnectedness of~$\mathbb{P}$ (Definition~\ref{def:ploscicaspaces}(2)) there exists $\varphi \in \mpm{\mathbb{P}}{\twoT_\T}$ such that $\varphi(x)=1$ and $\varphi(y)=0$. It follows that $\varphi \in \varepsilon_x^{-1}(1)\cap \varepsilon_y^{-1}(0)$ and thus $(\varepsilon_x, \varepsilon_y)\notin E$. 

Next, we show (ii). If $x\neq y$, then by the total edge-disconnectedness of $\mathbb{P}$ there exists $\varphi \in \mpm{\mathbb{P}}{\twoT_\T}$ such that $\varphi(x)\neq \varphi(y)$. Hence $\varex(\varphi)\neq \varepsilon_y(\varphi)$, so $\varex \neq \varepsilon_y$.

For (iii), we let $f \in \mph{\mathcal{L}(\mathbb{P})}{\twoB}$. Consider
$$ \mathcal{F} =\{\, \varphi^{-1}(1) \mid \varphi \in f^{-1}(1)\,\} \cup \{\, \varphi^{-1}(0) \mid \varphi \in f^{-1}(0)\,\}. $$
We claim that $\mathcal{F}$ has the Finite Intersection Property (FIP). Notice that for $I, J$ finite, we have
$$ \bigcap_{i \in I} \varphi^{-1}(1) = \left( \bigwedge_{i \in I} \varphi_i\right)^{-1}(1) \:\text{ and } \:\bigcap_{j\in J} \varphi_j^{-1}(0)=\left( \bigvee_{j \in J} \varphi_j\right)^{-1}(0)$$
and hence testing FIP can be reduced to testing $\varphi^{-1}(1) \cap \psi^{-1}(0)$ for some $\varphi \in f^{-1}(1)$ and some $\psi\in f^{-1}(0)$. If for such $\varphi, \psi$ we have $\varphi^{-1}(1) \cap \psi^{-1}(0)=\emptyset$ then $\varphi^{-1}(1)\subseteq X{\setminus}\psi^{-1}(0)$. By Lemma~\ref{lem:Lemma2-equiv}(ii) we get $\varphi^{-1}(1) \subseteq \psi^{-1}(1)$ and so $\psi \in f^{-1}(1)$, a contradiction.

We need $x \in X$ such that $\varex =f$. Since $\mathcal{F}\subseteq \powerset(X)$ and it has the FIP, it can be extended to an ultrafilter $\mathcal{U}$ on $\powerset(X)$. Since $(X,\tau)$ is compact we know that $\mathcal{U}$ must converge to some point, say  $x \in X$. Now $x\in \varphi^{-1}(1)$ for all $\varphi \in f^{-1}(1)$, and $x \in\varphi^{-1}(0)$ for all $\varphi \in f^{-1}(0)$. 
Hence $f^{-1}(1) \subseteq \varex^{-1}(1)$ and $f^{-1}(0) \subseteq \varex^{-1}(0)$. The equality then follows from the maximality of $f$.

To see that $\vartheta$ is continuous, consider $\vartheta^{-1}: \mathcal{D} (\mathcal{L}(\mathbb{P})) \to \mathbb{P}$. Let $V_\varphi$ be a subbasic closed set of $\mathcal{D}(\mathcal{L}(\mathbb{P}))$, say $V_\varphi =\{\, f \in \mph{\mathcal{L}(\mathbb{P})}{\twoB} \mid f(\varphi)=0\}$. We have $V_\varphi=\{\, \varepsilon_x \mid \varepsilon_x(\varphi)=0\,\}=\{\,\varepsilon_x \mid \varphi(x)=0\, \}$ and hence we obtain $\vartheta^{-1}(V_\varphi)=\{\,x\in X\mid \varphi(x)=0\,\}=\varphi^{-1}(0)$, a subbasic closed set of $\mathbb{P}$. A~similar calculation shows that $\vartheta^{-1}(W_\varphi)=\varphi^{-1}(1)$.

For the continuity of $\vartheta^{-1}$, consider a subbasic closed set $\varphi^{-1}(1)$ where 
$\varphi \in~\mpm{\mathbb{P}}{\twoT_\T}$. The following shows that $\vartheta$ maps $\varphi^{-1}(1)$ to a subbasic closed set: 
\begin{align*}
\vartheta(\varphi^{-1}(1))  &= \{\, \varepsilon_x \mid x \in \varphi^{-1}(1) \,\} \\
& = \{\, \varepsilon_x \mid \varphi(x)= 1 \,\}\\
& = \{\, \varepsilon_x \mid \varepsilon_x(\varphi)= 1 \,\}\\
& = \{ \, f \in \mph{\mathcal{L}(\mathbb{P})}{\twoB} \mid f(\varphi)=1\,\}\\
& = \{ \, f \in \mph{\mathcal{L}(\mathbb{P})}{\twoB} \mid e_\varphi(f)=1\,\} = e_\varphi^{-1}(1).
\end{align*}
\end{proof}

\section{Dual spaces of semidistributive lattices}
\label{sec:dualspacesSDL}

A lattice $\Lalg$ is \emph{join-semidistributive}
if it satisfies the following quasi-equation for all $a,b,c \in L$ (cf. \cite{AN-Ch3}): 
\begin{itemize}
\item[(JSD)] $a\vee b = a\vee c \quad \Longrightarrow \quad a \vee b = a \vee (b \wedge  c).$
\end{itemize}
Dually,
$\Lalg$ is \emph{meet-semidistributive} if it satisfies:
\begin{itemize}
\item[(MSD)] $a\wedge  b = 	a\wedge c \quad \Longrightarrow \quad a \wedge b = a \wedge (b \vee c).$
\end{itemize}
A lattice is \emph{semidistributive} if it satisfies both (JSD) and (MSD). 

We recall results from~\cite{P5} describing finite join- and meet-semi\-distri\-bu\-ti\-ve lattices and their dual TiRS digraphs.

\begin{thm}[{\cite[Theorem 3.2]{P5}}]\label{thm:SD-DFIP}
Let $\Lalg$ be a finite lattice.
\begin{itemize}
\item[{\upshape (i)}] $\Lalg$ is not join-semidistributive if and only if there exist distinct maximal disjoint filter-ideal pairs of the form $\langle {\uparrow}b, {\downarrow}a \rangle$ and $\langle {\uparrow}c, {\downarrow}a\rangle$ for some $a,b,c  \in L$.
\item[{\upshape (ii)}] $\Lalg$ is not meet-semidistributive if and only if there exist
distinct maximal disjoint filter-ideal pairs of the form $\langle {\uparrow}a, {\downarrow}b \rangle$ and $\langle {\uparrow}a, {\downarrow}c\rangle$ for some $a,b,c  \in L$.
\end{itemize}
\end{thm}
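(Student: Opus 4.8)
The plan is to reduce both parts to a concrete description of the maximal disjoint filter--ideal pairs of a finite lattice, and then to read (JSD)/(MSD) off from it. Since $\Lalg$ is finite, every filter has the form ${\uparrow}b$ and every ideal the form ${\downarrow}a$, so an MDFIP is an ordered pair $\langle {\uparrow}b,{\downarrow}a\rangle$ with $b\nleqslant a$. The first and main step is to establish: \emph{$\langle {\uparrow}b,{\downarrow}a\rangle$ is an MDFIP if and only if $a$ is meet-irreducible (hence has a unique upper cover $a^{*}$) and $b$ is a minimal element of $S_a:=\{\,w\in L\mid w\leqslant a^{*},\ w\nleqslant a\,\}$.} This is obtained by unwinding maximality directly: the ideal ${\downarrow}a$ cannot be properly enlarged while staying disjoint from ${\uparrow}b$ exactly when every upper cover of $a$ lies above $b$, and the filter ${\uparrow}b$ cannot be properly enlarged exactly when every lower cover of $b$ lies below $a$. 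Combining the first condition with $b\nleqslant a$ forces $a$ to have a unique upper cover $a^{*}$ (two distinct upper covers would meet to $a$ yet both lie above $b$, giving $b\leqslant a$), so $a$ is meet-irreducible and the content of the condition is just $b\leqslant a^{*}$; symmetrically the second condition forces $b$ to be join-irreducible with unique lower cover $b_{*}$ and reduces to $b_{*}\leqslant a$; and $b\nleqslant a$ together with $b\leqslant a^{*}$ and $b_{*}\leqslant a$ says precisely that $b$ is a minimal element of $S_a$. Two facts that fall out will be used below: for any MDFIP $\langle{\uparrow}b,{\downarrow}a\rangle$ we have $b_{*}\leqslant a$ and $a\vee b=a^{*}$ (from $a,b\leqslant a^{*}$ we get $a\vee b\leqslant a^{*}$, while $b\nleqslant a$ forces $a\vee b>a$, hence $a\vee b\geqslant a^{*}$); and, dually, an MDFIP is equally described by fixing the join-irreducible $b$ and taking $a$ maximal with $a\geqslant b_{*}$ and $b\nleqslant a$.

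For part (i): if there are distinct MDFIPs $\langle{\uparrow}b,{\downarrow}a\rangle$ and $\langle{\uparrow}c,{\downarrow}a\rangle$, then $b\neq c$ and both $b,c$ are minimal in $S_a$, so they are incomparable; hence $b\wedge c<b$ and therefore $b\wedge c\leqslant b_{*}\leqslant a$. Consequently $a\vee b=a^{*}=a\vee c$ while $a\vee(b\wedge c)=a\neq a^{*}$, so (JSD) fails. Conversely, suppose $\Lalg$ is not join-semidistributive, witnessed by $x\vee y=x\vee z=u$ with $x\vee(y\wedge z)<u$. Choose $a$ maximal subject to $a\geqslant x\vee(y\wedge z)$ and $u\nleqslant a$ (possible, since $x\vee(y\wedge z)$ itself qualifies). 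Then $a$ is meet-irreducible: otherwise $a$ would be the meet of two elements strictly above it, each of which, lying above $x\vee(y\wedge z)$, must have $u$ above it by maximality of $a$, giving $u\leqslant a$. Its upper cover $a^{*}$ still lies above $x\vee(y\wedge z)$, so maximality of $a$ gives $u\leqslant a^{*}$. Since $a\geqslant x\vee(y\wedge z)\geqslant x$ and $u\nleqslant a$, we get $y\nleqslant a$ and $z\nleqslant a$, while $y,z\leqslant u\leqslant a^{*}$; thus $y,z\in S_a$, whereas $y\wedge z\leqslant x\vee(y\wedge z)\leqslant a$, so $y\wedge z\notin S_a$. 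If $S_a$ had a unique minimal element $b$, then every element of $S_a$ would lie above $b$; since $y,z\in S_a$ this gives $b\leqslant y\wedge z\leqslant a$, contradicting $b\nleqslant a$. So $S_a$ has at least two minimal elements, which by the main step yield two distinct MDFIPs with common ideal ${\downarrow}a$.

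Part (ii) follows by order-duality. In the dual lattice $\Lalg^{\partial}$ filters and ideals interchange, so the MDFIPs $\langle{\uparrow}a,{\downarrow}b\rangle$ of $\Lalg$ correspond bijectively to the MDFIPs $\langle{\uparrow}b,{\downarrow}a\rangle$ of $\Lalg^{\partial}$, having a common filter becomes having a common ideal, and $\Lalg$ is meet-semidistributive iff $\Lalg^{\partial}$ is join-semidistributive; hence (ii) for $\Lalg$ is exactly (i) applied to $\Lalg^{\partial}$. I expect the only genuinely delicate points to be the covering-element bookkeeping in the main step, and, in the converse direction of (i), the choice of the meet-irreducible $a$: it must be taken above $x\vee(y\wedge z)$ rather than merely above $x$, since it is precisely the inclusion $y\wedge z\leqslant a$ that produces the final contradiction. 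One could alternatively route the argument through the Adaricheva--Nation characterisation of finite join-semidistributive lattices cited in the introduction, but the direct argument above appears cleaner and self-contained.
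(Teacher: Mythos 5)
Your proof is correct. Every step checks out: the characterisation of MDFIPs of a finite lattice as pairs $\langle {\uparrow}b,{\downarrow}a\rangle$ with $a$ meet-irreducible and $b$ a minimal element of $S_a=\{\,w\leqslant a^{*}\mid w\nleqslant a\,\}$ follows correctly from unwinding maximality coordinate-wise (and maximality of the pair is indeed equivalent to maximality in each coordinate separately, since enlarging one coordinate of a DFIP while fixing the other again yields a DFIP); the forward direction of (i) correctly exploits that two distinct minimal elements of $S_a$ are incomparable and that $b\wedge c\leqslant b_{*}\leqslant a$ while $a\vee b=a\vee c=a^{*}$; and in the converse direction the choice of $a$ maximal above $x\vee(y\wedge z)$ with $u\nleqslant a$ is exactly right — as you note, taking $a$ above $x$ alone would not force $y\wedge z\leqslant a$. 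The duality argument for (ii) is routine and sound. Your route is, however, genuinely different from the one behind the paper's statement: the theorem is imported from \cite[Theorem 3.2]{P5}, and the introduction of the present paper records that the proof there relied on the Adaricheva--Nation characterisation \cite[Theorem 3-1.4]{AN-Ch3} of finite join- and meet-semidistributive lattices (essentially the existence of the elements $\kappa(j)$ for join-irreducibles $j$). Your main step is in effect a self-contained rederivation of that $\kappa$-machinery in the language of MDFIPs, so the underlying combinatorics is the same, but your argument has the advantage of being elementary and citation-free, at the cost of some covering-element bookkeeping that the citation route hides. Neither argument survives in the infinite case, which is precisely why Section~4 of the paper replaces this approach by the forbidden-sublattice method.
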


The theorem below gives a characterisation of the dual digraphs  of finite join- and meet-semidistributive lattices. It is important to observe that each of the conditions (i), (ii) and (iii) below is a strengthening of the (S) condition from the definition of TiRS digraphs (Definition~\ref{def:TiRS}).

\begin{thm}[{\cite[Theorem 3.6]{P5}}]\label{SD:char}
Let $G=(V,E)$ be a finite TiRS digraph. Then
\begin{enumerate}[{\upshape (i)}] 
\item $G$ is the dual digraph of a finite lattice satisfying {\upshape (JSD)} if and only if it satisfies the following condition: 
$$ \text{\upshape (dJSD)} \qquad (\forall u,v\in V)\ \ u \neq v \quad \Longrightarrow \quad Eu \neq Ev.
$$
\item $G$ is the dual digraph of a finite lattice satisfying {\upshape (MSD)} if and only if it satisfies the following condition: 
$$\text{\upshape (dMSD)} \qquad (\forall u,v\in V)\ \ u \neq v \quad \Longrightarrow \quad uE \neq vE. $$
\item $G$ is the dual digraph of a finite semidistributive lattice if and only if it satisfies the following condition: 
$$\text{\upshape (dSD)} \qquad  (\forall u,v\in V)\ \ u \neq v \quad \Longrightarrow \quad (Eu\neq Ev \ \ \& \ \ uE \neq vE). $$
\end{enumerate}
\end{thm}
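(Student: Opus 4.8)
The plan is to reduce everything to the concrete description of the dual digraph of a finite lattice, and then to translate the filter--ideal criterion of Theorem~\ref{thm:SD-DFIP} into the three digraph conditions. First I would observe that, since $G$ is finite, the topology plays no role, and by the finite duality for TiRS digraphs (\cite{P3}; see also the finite case of Theorems~\ref{thm:rep-for-L} and~\ref{thm:graph-homeomorphic}) $G$ is digraph-isomorphic to the dual digraph $X_\Lalg$ of a finite bounded lattice $\Lalg$, which is determined up to isomorphism. Writing a vertex of $X_\Lalg$ as $\MDFIP{x}{y}$ with $x=\bigwedge F$ and $y=\bigvee I$, recall that $\MDFIP{x}{y}\,E\,\MDFIP{a}{b}$ iff $x\nleqslant b$; hence, for $u=\MDFIP{x}{y}$,
\[
Eu=\{\,\MDFIP{a}{b}\in X_\Lalg \mid a\nleqslant y\,\}\qquad\text{and}\qquad uE=\{\,\MDFIP{a}{b}\in X_\Lalg \mid x\nleqslant b\,\},
\]
so $Eu$ depends only on the ideal component $I$ of $u$, and $uE$ only on the filter component $F$.

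The decisive step is a reconstruction lemma: for an MDFIP $u=\mdfip{F}{I}$ of a lattice $\Lalg$ (finite or not) and $a\in L$, one has $a\in I$ if and only if every MDFIP $w=\mdfip{G}{J}$ with $w\,E\,u$ satisfies $a\notin G$. The ``only if'' direction is immediate, since $w\,E\,u$ means $G\cap I=\varnothing$. For the ``if'' direction, if $a\notin I$ then $\langle{\uparrow}a,I\rangle$ is a disjoint filter--ideal pair, because $I$ is a down-set and hence ${\uparrow}a\cap I=\varnothing$; extending it by Zorn's Lemma to an MDFIP $w=\mdfip{G}{J}$ gives $a\in G$ and $G\cap I\subseteq G\cap J=\varnothing$, i.e.\ $w\,E\,u$, contradicting the hypothesis. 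Dually, $a\in F$ if and only if $a$ lies outside the ideal of every MDFIP $w$ with $u\,E\,w$. Consequently, for MDFIPs $u,v$ of $\Lalg$: $Eu=Ev$ forces $I_u=I_v$, and $uE=vE$ forces $F_u=F_v$.

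Granting this, the three equivalences are bookkeeping. For (i): if $G=X_\Lalg$ with $\Lalg$ finite and satisfying (JSD), then whenever $u\neq v$ with $Eu=Ev$ the lemma gives $I_u=I_v$, so $\Lalg$ has two distinct MDFIPs with a common ideal, contradicting Theorem~\ref{thm:SD-DFIP}(i); thus (dJSD) holds. Conversely, given a finite TiRS digraph $G$ satisfying (dJSD), let $\Lalg$ be the finite lattice with $X_\Lalg\cong G$; if $\Lalg$ failed (JSD) then Theorem~\ref{thm:SD-DFIP}(i) would produce distinct MDFIPs $u\neq v$ with the same ideal, whence $Eu=Ev$ (as $Eu$ depends only on the ideal), contradicting (dJSD) through the digraph-isomorphism. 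Item (ii) is the symmetric argument using $uE$, the equality $F_u=F_v$ and Theorem~\ref{thm:SD-DFIP}(ii). For (iii): a finite semidistributive lattice satisfies both (JSD) and (MSD), so by (i) and (ii) its dual satisfies (dJSD) and (dMSD), i.e.\ (dSD); conversely, if $G$ satisfies (dSD), then by (i) and (ii) the finite lattice $\Lalg$ with $X_\Lalg\cong G$ satisfies both (JSD) and (MSD), hence is semidistributive, and $G$ is its dual.

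I expect the reconstruction lemma of the second paragraph to be the only real obstacle: once the ideal (resp.\ filter) of a maximal disjoint filter--ideal pair is known to be recoverable from its set of $E$-predecessors (resp.\ $E$-successors) in the dual digraph, the rest follows from Theorem~\ref{thm:SD-DFIP} and the finite duality. The lemma also explains why (dJSD), (dMSD) and (dSD) are strengthenings of the axiom (S) of Definition~\ref{def:TiRS}: (S) guarantees $uE\neq vE$ or $Eu\neq Ev$ for distinct $u,v$, and semidistributivity is precisely the demand that a prescribed one of these inequalities always holds.
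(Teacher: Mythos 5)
Your proposal is correct, and its key ``reconstruction lemma'' is precisely Proposition~\ref{lem:outneigh} of the paper ($Eu=Ev\Leftrightarrow I_u=I_v$ and $uE=vE\Leftrightarrow F_u=F_v$), proved there by the same Zorn-extension of $\langle{\uparrow}a,I\rangle$ that you use. Combining that with Theorem~\ref{thm:SD-DFIP} and the finite TiRS duality of \cite{P3} is exactly the route the paper takes (in the reverse direction) to obtain Corollary~\ref{cor:outneigh}, so your argument matches the intended proof.
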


It is interesting to realise that finite semidistributive lattices are exactly those finite lattices whose dual digraphs have the ``separation property'' (S) strengthened to the ``strong separation property'' (dSD).

Now we switch to general (not necessarily finite) lattices. Our aim is to 
initiate a study of general join and meet semidistributive lattices by their dual digraphs. We will start with the following result and combine it with Theorem~\ref{SD:char} in the finite case.

\begin{prop}\label{lem:outneigh}
Let $\Lalg$ be a general lattice (with bounds) and consider the set $X_\Lalg$ of all MDFIPs of $\Lalg$. Let $x=\mdfip{F}{I}$ and $y=\mdfip{G}{J}$. Then we have 
$$
Ex=Ey \Longleftrightarrow I=J \quad \& \quad xE=yE \Longleftrightarrow F=G.
$$
\end{prop}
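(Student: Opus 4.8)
The plan is to prove the two biconditionals separately, establishing the equivalence $Ex = Ey \iff I = J$ in detail; the statement $xE = yE \iff F = G$ then follows by a dual argument. Recall that by (E2), for MDFIPs $\mdfip{G}{J}$ and $\mdfip{A}{B}$ we have $\mdfip{A}{B} E \mdfip{G}{J}$ iff $A \cap J = \emptyset$. So $Ex$ is the set of all MDFIPs $\mdfip{A}{B}$ with $A \cap I = \emptyset$, and similarly for $Ey$ with $J$ in place of $I$.

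For the easy direction, if $I = J$ then clearly $Ex = Ey$ directly from the description above, since membership in $Ex$ (resp. $Ey$) depends only on $I$ (resp. $J$). For the converse, suppose $I \neq J$; without loss of generality pick $d \in I {\setminus} J$. I want to produce an MDFIP $\mdfip{A}{B}$ that lies in exactly one of $Ex$, $Ey$. The natural candidate is to start from the disjoint filter-ideal pair $\mdfip{{\uparrow} d}{\{0\}}$ — or, to have the ideal side carry the information, I would instead work with the pair built to ``avoid'' $I$ on the filter side while ``hitting'' $J$. Concretely: consider ${\uparrow} d$ as a filter and the principal ideal ${\downarrow}0 = \{0\}$; since $d \notin \{0\}$ (assuming $d \ne 0$; the case $d = 0$ forces $I = L$ which is impossible for a disjoint pair unless $F = \emptyset$, also impossible, so $d \ne 0$ is automatic once $I \ne \{0\}$, and the boundary cases are handled directly), this is a disjoint filter-ideal pair, and by Zorn's Lemma it extends to an MDFIP $z = \mdfip{A}{B}$ with $d \in A$. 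Then $d \in A \cap I$ so $A \cap I \neq \emptyset$, giving $z \notin Ex$. The key point I must verify is that $z \in Ey$, i.e. $A \cap J = \emptyset$: this is where the argument needs care, because a priori the maximal extension $A$ of ${\uparrow}d$ could pick up elements of $J$.

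To fix this, the right construction is to extend the pair $\mdfip{{\uparrow}d}{J}$ rather than $\mdfip{{\uparrow}d}{\{0\}}$. I must first check this is a \emph{disjoint} filter-ideal pair, i.e. ${\uparrow}d \cap J = \emptyset$: if some $e \ge d$ lay in $J$ then $d \in J$ (ideals are downsets), contradicting $d \notin J$. So $\mdfip{{\uparrow}d}{J}$ is a disjoint filter-ideal pair, and by Zorn it extends to an MDFIP $z = \mdfip{A}{B}$ with ${\uparrow}d \subseteq A$ and $J \subseteq B$. Since $A \cap B = \emptyset$ and $J \subseteq B$, we get $A \cap J = \emptyset$, hence $z E y$, i.e. $z \in Ey$. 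On the other hand $d \in {\uparrow}d \subseteq A$ and $d \in I$, so $A \cap I \neq \emptyset$, hence $z \notin Ex$. Thus $Ex \neq Ey$, completing the contrapositive. The symmetric statement $xE = yE \iff F = G$ follows by the order-dual construction: using (E2) in the form $x E z \iff F \cap (\text{ideal of } z) = \emptyset$, one picks $d \in F {\setminus} G$ (say), checks that $\mdfip{G'}{{\downarrow}d}$ — with $G'$ some filter, e.g. $\{1\}$, or more robustly $F$ itself after checking disjointness with ${\downarrow}d$... actually the clean dual is: extend $\mdfip{F}{{\downarrow}d}$ is wrong since we need $d \notin$ the filter; instead extend a pair whose filter avoids the relevant filter and whose ideal contains $d$ — I would mirror the above argument with the roles of filters/ideals and of $E$-predecessors/$E$-successors swapped. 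The main obstacle is precisely the verification that the Zorn-extended MDFIP still separates $x$ and $y$ the right way; choosing to extend $\mdfip{{\uparrow}d}{J}$ (resp. its dual) rather than a principal pair is what makes this go through cleanly.
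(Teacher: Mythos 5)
Your proof is correct and follows essentially the same route as the paper: the forward implications are immediate from (E2), and for the converse you pick $a \in I{\setminus}J$ and extend the disjoint filter-ideal pair $\mdfip{{\uparrow}a}{J}$ to an MDFIP $z$, which then lies in $Ey$ but not in $Ex$ (the paper performs the same extension in two Zorn steps rather than one, but the construction and the separating element are identical). Your explicit check that ${\uparrow}a \cap J = \emptyset$ is a detail the paper leaves implicit, and the dual argument for $xE = yE \iff F = G$ is handled the same way in both.
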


\begin{proof}
We prove the first equivalence and the second one can be shown by dual arguments. Assume $I=J$ and let $z=\mdfip{H}{K}$. Then
\[zEx \quad \Longleftrightarrow \quad H \cap I = \emptyset \quad \Longleftrightarrow \quad H \cap J = \emptyset
\quad \Longleftrightarrow \quad zEy. \]

Now let $Ex=Ey$ and suppose that $I \neq J$. Without loss of generality, let $I \nsubseteq J$. Then there exists $a \in I$ with $a \notin J$. Now consider the DFIP $\mdfip{{\uparrow}a}{J}$. Extend $J$ to $\hat{J}$, which is maximal with respect to being disjoint from ${\uparrow}a$. Now extend ${\uparrow}a$ to $\hat{G}$ and consider the MDFIP $z=\mdfip{\hat{G}}{\hat{J}}$. Since $\hat{G} \cap \hat{J} = \emptyset$ and $J \subseteq \hat{J}$, we have $\hat{G}\cap J = \emptyset$. So $z \in Ey$. Since $a \in \hat{G} \cap I$ we have $z \notin Ex$, a contradiction.
\end{proof}

From Proposition~\ref{lem:outneigh} and Theorem~\ref{SD:char} we immediately obtain the following result. We note that the letter ``{\upshape u}" stands for \emph{Urquhart} as we use his concept of MDFIPs to obtain the characterisations below.

\begin{coro}\label{cor:outneigh}
Let $\Lalg$ be a finite lattice (with bounds) and let $X_\Lalg$
be the set of all MDFIPs of $\Lalg$. Then
\begin{enumerate}[{\upshape (i)}]
\item $\Lalg$ is join-semidistributive if and only if it satisfies the condition
\[\text{\upshape (uJSD)} \qquad (\forall x=\mdfip{F}{I}, y=\mdfip{G}{J} \in X_\Lalg)\ \ x \neq y \quad \Longrightarrow \quad I \neq J.\]

\item $\Lalg$ is meet-semidistributive if and only if it satisfies the condition
\[\text{\upshape (uMSD)} \quad (\forall x=\mdfip{F}{I}, y=\mdfip{G}{J} \in X_\Lalg)\ \
x \neq y \quad \Longrightarrow \quad F \neq G.\]

\item $\Lalg$ is semidistributive if and only if it satisfies the condition
\[\text{\upshape (uSD)} \ \ (\forall x=\mdfip{F}{I}, y=\mdfip{G}{J} \in X_\Lalg)\ \  
x \neq y \ \ \Longrightarrow \ \  (I \neq J \  \& \  F \neq G).\]

\end{enumerate}
\end{coro}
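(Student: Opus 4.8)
\textbf{Proof plan for Corollary~\ref{cor:outneigh}.}

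The plan is to combine Proposition~\ref{lem:outneigh} with Theorem~\ref{SD:char} by translating the conditions (dJSD), (dMSD) and (dSD) on the dual digraph $G = (X_\Lalg, E)$ into the (u)-conditions phrased directly in terms of the MDFIPs. First I would recall that by \cite[Proposition 2.3]{P3} the dual digraph $(X_\Lalg, E)$ of a finite lattice $\Lalg$ with bounds is a TiRS digraph, so Theorem~\ref{SD:char} applies to it with $V = X_\Lalg$. Moreover, for a finite lattice every filter is an up-set ${\uparrow}x$ and every ideal a down-set ${\downarrow}y$, so the dual digraph $(X_\Lalg, E)$ is precisely ``the dual digraph of'' $\Lalg$ in the sense of Theorem~\ref{SD:char}; hence $G$ is the dual digraph of a finite lattice satisfying (JSD) if and only if $\Lalg$ itself satisfies (JSD), and likewise for (MSD) and semidistributivity.

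For part (i): by Theorem~\ref{SD:char}(i), $\Lalg$ satisfies (JSD) if and only if $(X_\Lalg, E)$ satisfies (dJSD), namely that $u \neq v$ implies $Eu \neq Ev$ for all $u, v \in X_\Lalg$. Writing $u = \mdfip{F}{I}$ and $v = \mdfip{G}{J}$, Proposition~\ref{lem:outneigh} gives the equivalence $Eu = Ev \iff I = J$. Taking contrapositives, $Eu \neq Ev \iff I \neq J$, so (dJSD) becomes exactly the statement $u \neq v \Rightarrow I \neq J$, which is (uJSD). Part (ii) is dual: by Theorem~\ref{SD:char}(ii), $\Lalg$ satisfies (MSD) iff $(X_\Lalg, E)$ satisfies (dMSD), i.e.\ $u \neq v \Rightarrow uE \neq vE$; and by the second equivalence of Proposition~\ref{lem:outneigh}, $uE = vE \iff F = G$, so (dMSD) translates into (uMSD). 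For part (iii), semidistributivity is the conjunction of (JSD) and (MSD), matching the conjunction in (dSD) from Theorem~\ref{SD:char}(iii), and the two equivalences of Proposition~\ref{lem:outneigh} together turn $(Eu \neq Ev\ \&\ uE \neq vE)$ into $(I \neq J\ \&\ F \neq G)$, giving (uSD). One could also derive (iii) formally as the conjunction of (i) and (ii).

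I do not expect any genuine obstacle here: the corollary is a direct dictionary translation, and the only points requiring a word of care are (a) confirming that the finite dual digraph in the sense of Section~\ref{sec:dualspacesSDL} coincides with ``the dual digraph of a finite lattice'' as used in Theorem~\ref{SD:char} — which is immediate from the finite-lattice remarks in Section~\ref{sec:prelim} identifying MDFIPs with pairs $\MDFIP{x}{y}$ — and (b) making sure the contrapositive is taken in the right direction, since Proposition~\ref{lem:outneigh} is stated as an equivalence of equalities whereas the (u)- and (d)-conditions are phrased with non-equalities. Neither is substantive; the proof is essentially ``apply Proposition~\ref{lem:outneigh} and Theorem~\ref{SD:char}.''
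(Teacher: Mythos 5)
Your proposal is correct and matches the paper exactly: the paper derives this corollary "immediately" from Proposition~\ref{lem:outneigh} and Theorem~\ref{SD:char}, which is precisely the dictionary translation you carry out. The two points of care you flag (identifying the finite dual digraph with the one in Theorem~\ref{SD:char}, and taking contrapositives of the equivalences) are handled correctly and are all that is needed.
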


We are able to show that the condition (dJSD) presented above, which characterises in the finite case the dual digraphs of join semidistributive lattices, works also in the general (infinite) case as a natural condition on the dual digraph which is sufficient 
for the lattice to be join semidistributive.

\begin{thm}\label{prop:SJimpliesJSD}
Let $\Lalg$ be a general lattice (with bounds).
If the dual digraph 
$\mathbf{X}_\Lalg=(X_\Lalg,E)$ of $\Lalg$ satisfies the 
condition (dJSD), then $\Lalg$ is join semidistributive.
\end{thm}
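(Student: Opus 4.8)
The plan is to argue by contraposition: assuming $\Lalg$ fails (JSD), I will produce two distinct MDFIPs of $\Lalg$ with the same ideal component, thereby violating (dJSD) via the equivalence $Ex = Ey \iff I = J$ from Proposition~\ref{lem:outneigh}. So suppose there exist $a,b,c \in L$ with $a \vee b = a \vee c$ but $a \vee (b \wedge c) < a \vee b$. Write $d := a \vee (b \wedge c)$ and $u := a \vee b = a \vee c$, so $d < u$. The natural thing to do is to work with the ideal $I := {\downarrow} d$ (or rather an ideal closely related to it) and locate two different maximal disjoint filter-ideal pairs sitting above it whose filter parts differ.

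**Key steps.** First I would consider the element $u = a \vee b$ and the principal ideal ${\downarrow}d$; since $u \nleq d$, the pair $\langle {\uparrow}u, {\downarrow}d\rangle$ is a disjoint filter-ideal pair (as $x \geq u$ and $x \leq d$ would force $u \leq d$). Extend ${\downarrow}d$ to an ideal $I$ maximal with respect to being disjoint from ${\uparrow}u$; then extend ${\uparrow}u$ to a filter, but the crucial move is to do this in two different ways — once ``through $b$'' and once ``through $c$''. Concretely: since $b \notin I$ (because $b \leq d$ would give $b = b \wedge b \leq$ hmm, actually one must check $b \notin I$, which follows because $u = a\vee b \leq a \vee b$ and if $b \in I$ then, combined with $a$... here one uses Lemma~\ref{lem:ImaxS}(i)): as $I$ is maximal disjoint from ${\uparrow}u$ and $b \notin I$, there is $a_1 \in I$ with $a_1 \vee b \in {\uparrow}u$, i.e. $a_1 \vee b \geq u$; similarly $c \notin I$ gives $a_2 \in I$ with $a_2 \vee c \geq u$. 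Now form the filter $F_b$ generated by $\{b\}$ and extend ${\uparrow}u$... — more precisely, I want filters $F_b \ni b$, $F_c \ni c$, each disjoint from $I$, each extended to a maximal such filter $G_b, G_c$. The pairs $\langle G_b, \hat I_b\rangle$ and $\langle G_c, \hat I_c\rangle$ (completing to MDFIPs) are the candidates. The point is: $b \in G_b$ but $b \notin G_c$ — indeed if $b \in G_c$ then, since $c \in G_c$ and $G_c$ is a filter, $b \wedge c \in G_c$, so $d = a \vee (b\wedge c) \geq b \wedge c$ lies in... no — rather, $b \wedge c \in G_c$ while $b \wedge c \leq d \in I$, contradicting disjointness. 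That is the key contradiction making the two filters distinct.

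The remaining bookkeeping is to ensure the ideal components of the two MDFIPs coincide, or at least to reduce to that situation: after fixing the common ideal $I$ and the two distinct filters $F_b, F_c$ disjoint from $I$, enlarge each to a maximal disjoint filter-ideal pair \emph{while holding the ideal fixed at $I$} — one extends only the filter side to a filter maximal with respect to disjointness from $I$, and then checks that the resulting pair is already an MDFIP (the ideal cannot be enlarged further without meeting the filter, by maximality of $I$ among ideals disjoint from the relevant up-set; this needs Lemma~\ref{lem:ImaxS}). Then both MDFIPs have ideal $I$, they are distinct because their filters differ (as shown, $b$ separates them), and $Ex = Ey$ by Proposition~\ref{lem:outneigh}, contradicting (dJSD).

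**Main obstacle.** The delicate part is the simultaneous-maximalisation: I need two distinct \emph{maximal} disjoint filter-ideal pairs with \emph{exactly} the same ideal, and it is not automatic that after maximalising the filter side the ideal side is still maximal — one must choose the initial ideal $I$ carefully (maximal disjoint from ${\uparrow}u$, not just equal to ${\downarrow}d$) and then verify, using Lemma~\ref{lem:ImaxS}(i) applied to the final filters, that no element can be added to $I$. Getting this Zorn's-Lemma argument to land on genuinely distinct pairs — rather than accidentally collapsing $G_b$ and $G_c$ to the same maximal filter — is where the hypothesis $a \vee (b \wedge c) < a \vee b$ must be used in full force, precisely through the observation that $b \wedge c \leq d \in I$ forbids $b$ and $c$ from lying in a common filter disjoint from $I$. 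I expect the write-up to need a short lemma (or explicit verification inline) that an ideal maximal-disjoint-from-$S$ together with a filter maximal-disjoint-from-that-ideal automatically forms an MDFIP.
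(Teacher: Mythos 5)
Your proposal follows essentially the same route as the paper's proof: extend ${\downarrow}(a\vee(b\wedge c))$ to an ideal $I$ maximal with respect to being disjoint from ${\uparrow}(a\vee b)$, extend ${\uparrow}b$ and ${\uparrow}c$ to filters maximal disjoint from $I$ (these are automatically MDFIPs with common ideal $I$ since ${\uparrow}(a\vee b)$ is contained in both filters), and separate the two filters via $b\wedge c\le a\vee(b\wedge c)\in I$. The one step you left hanging --- that $b,c\notin I$ --- is closed exactly as you began to guess: since $a\vee(b\wedge c)\in I$, having $b\in I$ would force $(a\vee(b\wedge c))\vee b=a\vee b\in I$, contradicting disjointness from ${\uparrow}(a\vee b)$; no appeal to Lemma~\ref{lem:ImaxS} is needed there.
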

 
\begin{proof} Assume that the lattice $\Lalg$ is not join semidistributive. Then there exist $a,b,c \in L$ such that $a \vee b = a \vee c$ but $a\vee (b \wedge c) < a \vee b$. Consider the DFIP $\MDFIP{(a\vee b)}{(a \vee (b \wedge c))}$ and extend the ideal to $I$, which is maximal with respect to being disjoint from ${\uparrow}(a\vee b)$. (See also Figure~\ref{fig:sufficiency}.) 

We show that ${\uparrow}b$ is disjoint from $I$. If not, then since $a\vee (b \wedge c) \in I $ and $b \in I$, we get $(a \vee (b \wedge c))\vee b =a\vee b \in I$.  Similarly, if $c \in I$ then $a \vee c=a\vee b \in I$, and so ${\uparrow}c \cap I = \emptyset$.

Hence we can extend the filters ${\uparrow}b$  to $F$ and ${\uparrow} c$ to $G$ so  that both $F$ and $G$ are maximal with respect to being disjoint from $I$. We claim that $F \neq G$. We show that $b\notin G$. If $b \in G$, then $b \wedge c \in G$ and since $G$ is an up-set this would imply $a \vee (b \wedge c) \in G$, a contradiction.

Now with $x=\mdfip{F}{I}$ and $y=\mdfip{G}{I}$ we have $x \neq y$.  Using Proposition~\ref{lem:outneigh} we get $Ex=Ey$ and hence $\mathbf{X}_\Lalg$ does not satisfy (dJSD). 
\end{proof}

\begin{rem}
We note here why in the above proof of Theorem~\ref{prop:SJimpliesJSD}  we must extend the ideal ${\downarrow}a \vee (b \wedge c)$ and not just ${\downarrow}a$. Notice that in the example of the lattice in Figure~\ref{fig:sufficiency}, one extension of ${\downarrow}a$ is to ${\downarrow}d$, which would allow ${\uparrow}b$ and ${\uparrow}c$ to be extended to ${\uparrow}e$.

\begin{figure}[ht]
\centering
\begin{tikzpicture}[scale=0.8]
%L5
\begin{scope}
  % Elements
\node[unshaded] (top) at (0,4) {};
\node[unshaded] (b) at (-1,3) {};
\node[unshaded] (c) at (0,3) {};
\node[unshaded] (f) at (1,3) {};
\node[unshaded] (e) at (0,2) {};
\node[unshaded] (bot) at (0,1) {};
\node[unshaded] (a) at (1,2) {};
\node[unshaded] (d) at (2,3) {};
  % Order
  \draw[order] (bot)--(e)--(b)--(top);
  \draw[order] (e)--(c)--(top);
\draw[order] (e)--(f)--(top);
\draw[order] (bot)--(a)--(f)--(top);
\draw[order] (a)--(d)--(top);
  % Labels
  \node[label,anchor=west,xshift=1pt] at (a) {$a$};
  \node[label,anchor=east,xshift=-1pt] at (c) {$c$};
  \node[label,anchor=west,xshift=1pt] at (d) {$d$};
  \node[label,anchor=east,xshift=-1pt] at (b) {$b$};
\node[label,anchor=east,xshift=-1pt] at (e) {$e$};

  \end{scope}
\end{tikzpicture}
\caption{Illustrating the proof of Theorem~\ref{prop:SJimpliesJSD}}\label{fig:sufficiency} 
\end{figure}
\end{rem}

By using dual arguments and then combining the two results, it follows that the conditions (dMSD) and (dSD) for the dual digraphs serve as sufficient conditions for the properties of lattice meet semidistributivity and semidistributivity, respectively. 

\begin{coro}\label{coro:(dSD)}
Let $\Lalg$ be a general lattice (with bounds). 
\begin{enumerate}[\normalfont (i)]
\item 
If the dual digraph $\mathbf{X}_\Lalg=(X_\Lalg,E)$ of $\Lalg$ satisfies the condition (dMSD), then $\Lalg$ is meet semidistributive.
\item If the dual digraph $\mathbf{X}_\Lalg=(X_\Lalg,E)$ of $\Lalg$ satisfies the condition (dSD), then $\Lalg$ is semidistributive.
\end{enumerate} 
\end{coro}

\begin{rem}\label{checking-u-versions}
Since we want to initiate the study of general join semidistributive lattices, meet semidistributive lattices  and semidistributive lattices by their dual spaces, we prefer to use above the conditions (dJSD), (dMSD) and (dSD), respectively, which are expressed via the vertices $u, v$  and via the ``downsets'' $Eu, Ev$ and the ``upsets'' $uE, vE$ in the dual digraphs $\mathbf{X}_\Lalg=(X_\Lalg,E)$ of lattices~$\Lalg$.

However, as we also show in the next examples, if we have the dual digraphs of the lattices presented explicitly as the filter-ideal pairs, then it might be easier to check the conditions (dJSD), (dMSD), and (dSD) on the dual digraphs by checking the conditions (uJSD), (uMSD), and (uSD) from Corollary~\ref{cor:outneigh}, these conditions are by Proposition~\ref{lem:outneigh} equivalent to the previous ones. The conditions (uJSD), (uMSD) and (uSD) can often be seen immediately on the dual digraphs as they only say that different vertices $x, y \in \mathbf{X}_\Lalg$ do not share the same filters or the same ideals.  
\end{rem}

We now illustrate our results above on an example of an infinite semidistributive lattice and on an example of an infinite meet semidistributive lattice that is not join semidistributive. We also illustrate our claim in the above remark that the conditions (uJSD), (uMSD) and (uSD) can often be seen immediately on the dual digraphs. 
   
\begin{figure}[ht]
\centering
\begin{tikzpicture}[scale=0.65]
%L_5
\begin{scope}[yshift=2.5cm]
\node[unshaded] (bot) at (0,0) {};
\node[unshaded] (x1) at (-1,2) {};
\node[unshaded] (x0) at (-1,1) {};
\node[unshaded] (y1) at (1,2) {};
\node[unshaded] (y0) at (1,1) {};
\node[unshaded] (y) at (1,4) {};
\node[unshaded] (x) at (-1,4) {};
\node[unshaded] (top) at (0,5) {};
\node[] (rght) at (1,3) {$\vdots$};
\node[] (lft) at (-1,3) {$\vdots$};

% Order
\draw[order] (bot)--(x0)--(x1)--(lft)--(x)--(top)--(y)--(rght)--(y1)--(y0)--(bot);
\node[label,anchor=north] at (bot) {$0$};
\node[label,anchor=east,xshift=1pt] at (x0) {$a_0$};
\node[label,anchor=east,xshift=1pt] at (x1) {$a_1$};
\node[label,anchor=west,xshift=-2pt] at (y0){$b_0$};
\node[label,anchor=west,xshift=-2pt] at (y1){$b_1$};
\node[label,anchor=east,xshift=1pt] at (x) {$a_\omega$};
\node[label,anchor=west,xshift=-2pt] at (y){$b_\omega$};
\node[label,anchor=south] at (top) {$1$};
\end{scope}
%G_(L_5)

\begin{scope}[xshift=8.7cm, scale=0.77]

\node[] (xI) at (-4, 2.5) {$a_\omega I_a$};
\node[] (lft) at (-4, 5) {$\vdots$};
\node[] (x3x2) at (-4, 7.5) {$a_3a_2$};
\node[] (x2x1) at (-4, 10) {$a_2a_1$};
\node[] (x1x0) at (-4, 12.5) {$a_1a_0$};

\node[] (yI) at (4, 12.5) {$b_\omega I_b$};
\node[] (rght) at (4, 10) {$\vdots$};
\node[] (y3y2) at (4, 7.5) {$b_3b_2$};
\node[] (y2y1) at (4, 5) {$b_2b_1$};
\node[] (y1y0) at (4, 2.5) {$b_1b_0$};

\node[] (x0y) at (0, 15) {$a_0b_\omega$};
\node[] (y0x) at (0, 0) {$b_0a_\omega$};

\path[thick,->,shorten >=3pt,shorten <=3pt] (y0x.north west) edge (xI.south east);
\path[thick,->,shorten >=3pt,shorten <=3pt] (y1y0.south west) edge (y0x.north east);

\path[thick,->,shorten >=3pt,shorten <=3pt] (x0y.south east) edge (yI.north west);
\path[thick,->,shorten >=3pt,shorten <=3pt] (x1x0.north east) edge (x0y.south west);

\path[thick,->,shorten >=3pt,shorten <=3pt] (xI.north) edge  (lft.south);
\path[thick,->,shorten >=3pt,shorten <=3pt] (lft.north) edge  (x3x2.south);
\path[thick,->,shorten >=3pt,shorten <=3pt] (x3x2.north) edge  (x2x1.south);
\path[thick,->,shorten >=3pt,shorten <=3pt] (x2x1.north) edge  (x1x0.south);

\path[thick,->,shorten >=3pt,shorten <=3pt] (yI.south) edge  (rght.north);
\path[thick,->,shorten >=3pt,shorten <=3pt] (rght.south) edge  (y3y2.north);
\path[thick,->,shorten >=3pt,shorten <=3pt] (y3y2.south) edge  (y2y1.north);
\path[thick,->,shorten >=3pt,shorten <=3pt] (y2y1.south) edge  (y1y0.north);

\path[thick,->,shorten >=5pt,shorten <=4pt] (xI.north west) [out=135,in=225] edge  (x1x0.south west);
\path[thick,->,shorten >=5pt,shorten <=4pt] (xI.north west) [out=135,in=225] edge  (x2x1.south west);
\path[thick,->,shorten >=5pt,shorten <=4pt] (xI.north west) [out=135,in=225] edge  (x3x2.south west);
\path[thick,->,shorten >=5pt,shorten <=4pt] (x3x2.north west) [out=130,in=230] edge  (x1x0.south west);

\path[thick,->,shorten >=5pt,shorten <=4pt] (yI.south east) [out=315,in=45] edge  (y1y0.north east);
\path[thick,->,shorten >=5pt,shorten <=4pt] (yI.south east) [out=315,in=45] edge  (y2y1.north east);
\path[thick,->,shorten >=5pt,shorten <=4pt] (yI.south east) [out=315,in=45] edge  (y3y2.north east);
\path[thick,->,shorten >=5pt,shorten <=4pt] (y3y2.south east) [out=310,in=40] edge  (y1y0.north east);

\path[thick,->,shorten >=5pt,shorten <=4pt] (x2x1.north east) edge  (x0y.south west);
\path[thick,->,shorten >=5pt,shorten <=4pt] (x3x2.north east) edge  (x0y.south west);
\path[thick,->,shorten >=5pt,shorten <=4pt] (xI.north east) edge  (x0y.south west);

\path[thick,->,shorten >=5pt,shorten <=4pt] (y2y1.south west) edge  (y0x.north east);
\path[thick,->,shorten >=5pt,shorten <=4pt] (y3y2.south west) edge  (y0x.north east);
\path[thick,->,shorten >=5pt,shorten <=4pt] (yI.south west) edge  (y0x.north east);

\path[thick,->,shorten >=5pt,shorten <=4pt] (y0x.north west) edge  (x3x2.south east);
\path[thick,->,shorten >=5pt,shorten <=4pt] (y0x.north west) edge  (x2x1.south east);
\path[thick,->,shorten >=5pt,shorten <=4pt] (y0x.north west) edge  (x1x0.south east);

\path[thick,->,shorten >=5pt,shorten <=4pt] (x0y.south east) edge  (y3y2.north west);
\path[thick,->,shorten >=5pt,shorten <=4pt] (x0y.south east) edge  (y2y1.north west);
\path[thick,->,shorten >=5pt,shorten <=4pt] (x0y.south east) edge  (y1y0.north west);

\end{scope}

\end{tikzpicture}
\caption{The infinite semidistributive lattice
$\mathbf{O}_\omega$,  
and the core of its dual space} 
\label{fig:dual-examples2}
\end{figure}

To simplify notation, we will  write $FI$ for $\mdfip{F}{I}$; in case $F = {\uparrow}a$ or $I = {\downarrow}b$, we  simply write $aI$ or $Fb$ 
or $ab$. 

\begin{eg}\label{Example:SD} 
(i) [\textbf{An infinite semidistributive lattice}]
Let $\mathbf{O}_\omega$ %$O_\omega$  
be the lattice with infinite chains $0<a_0<a_1<\dots<a_\omega<1$ and $0<b_0<b_1<\dots<b_\omega<1$ (see Figure~\ref{fig:dual-examples2}).
The dual space $\mathcal{D}(\mathbf{O}_\omega) = (X_{\mathbf{O}_\omega}, E,\T_{\mathbf{O}_\omega})$ has as its underlying set the set of MDFIPs 
\[X_{\mathbf{O}_\omega} = \{a_0b_\omega, a_{1}a_0, a_{2}a_1, ..., a_\omega I_a\}\, \cup \,\{b_0a_\omega, b_{1}b_0, b_{2}b_1, ..., b_\omega I_b\},\]
where $I_a = \{0, a_0, a_1, a_2,...\}$ and $I_b = \{0, b_0, b_1, b_2,...\}$. One can check that the MDFIPs that naturally arise in the edge relation $E$ are given by 
\begin{multline*}
        a_{j+1}a_jEa_{i+1}a_i, \quad \quad a_{j+1}a_j E a_{0}b_\omega, \quad \quad 
        b_0a_\omega E a_\omega I_a, 
        \quad \quad 
        b_0a_\omega E a_{i+1} a_i, \\
        a_{i+1}a_iEb_{k+1}b_k, \quad \quad a_{i+1}a_iEb_{\omega}I_b, \quad \quad a_{\omega}I_aEb_{k+1}b_k, \quad \quad a_{\omega}I_aEb_{\omega}I_b 
\end{multline*}
for all $i,j,k \in \omega$ with $i<j$. Swapping all $a$'s with $b$'s above will give us, with the previous sublist, the list of all MDFIPs in the edge relation $E$. The lattice and it dual space are drawn  in Figure~\ref{fig:dual-examples2}. For improved readability the double arrows $a_{i+1}a_iEb_{k+1}b_k$ and $b_{k+1}b_kEa_{i+1}a_i$ are not presented. 
 
To describe the basic open sets of the topology, we note that it is not hard to compute the intersections of subbasic open sets in $\mathcal{D}(\mathbf{O}_\omega)$. 
In particular for all $i,k\in \omega$ we have $V_{a_i}\cap W_{b_k} = \{b_0a_\omega\} = W_{b_0} = V_{a_{\omega}}$ and $V_{b_k}\cap W_{a_i} = \{a_0b_\omega\} = W_{a_0} = V_{b_\omega}$. If $a_i < a_k$ we have $V_{a_k}\cap W_{a_i}=\varnothing$ and $V_{a_i}\cap W_{a_k} = \{a_{i+1}a_i, a_{i+2}a_{i+1}, ..., a_ka_{k-1}\}$. Similar intersections hold for the pairs of $b_i$ and $b_k$. We also have that $V_{a_i}\cap W_{a_\omega} = \{a_{i+1}a_i, a_{i+2}a_{i+1},\dots, a_\omega I_a\}$, which is infinite. In fact, the only infinite basic open sets are of the form $V_{a_i}\cap W_{a_\omega}$ and $V_{b_i}\cap W_{b_{\omega}}$. 

It is easy to see that in the dual digraph any two different vertices do not share the same ideal and do not share the same filter. Hence the dual digraph satisfies the condition (uSD), which by Corollary~\ref{coro:(dSD)}(ii) and Remark~\ref{checking-u-versions} witnesses that  $\mathbf{O}_\omega$ is semidistributive.

(ii) [\textbf{An infinite meet semidistributive 
lattice, which is not join semidistributive}]

\begin{figure}[ht]
\centering
\begin{tikzpicture}[scale=0.7]
%L_5
\begin{scope}[xshift = -2cm, yshift = -2cm]
\node[unshaded] (bot) at (0,0) {};
\node[unshaded] (x1) at (-1,2) {};
\node[unshaded] (x0) at (-1,1) {};
\node[unshaded] (y1) at (1,2) {};
\node[unshaded] (y0) at (1,1) {};
\node[unshaded] (yi) at (1,4) {};
\node[unshaded] (xi) at (-1,4) {};
\node[unshaded] (x) at (-1,5) {};
\node[unshaded] (y) at (1,5) {};
\node[unshaded] (z) at (0,5) {};
\node[unshaded] (top) at (0,6) {};
\node[] (rght) at (1,3) {$\vdots$};
\node[] (lft) at (-1,3) {$\vdots$};

% Order
\draw[order] (bot)--(x0)--(x1)--(lft)--(xi)--(z)--(yi)--(rght)--(y1)--(y0)--(bot);
\draw[order] (xi)--(x)--(top)--(y)--(yi);
\draw[order] (z)--(top);
\node[label,anchor=north] at (bot) {$0$};
\node[label,anchor=east,xshift=1pt] at (x0) {$a_0$};
\node[label,anchor=east,xshift=1pt] at (x1) {$a_1$};
\node[label,anchor=west,xshift=-2pt] at (y0){$b_0$};
\node[label,anchor=west,xshift=-2pt] at (y1){$b_1$};
\node[label,anchor=east,xshift=1pt] at (xi) {$a_\omega$};
\node[label,anchor=west,xshift=-2pt] at (yi){$b_\omega$};
\node[label,anchor=east,xshift=1pt] at (x) {$a$};
\node[label,anchor=west,xshift=-2pt] at (y){$b$};
\node[label,anchor=east] at (z) {$c$};
\node[label,anchor=south] at (top) {$1$};
\end{scope}
%G_(L_5)

\begin{scope}[xshift=5cm, scale=0.85]

\node[] (xI) at (0, 3) {$a_\omega I_a$}; 
\node[] (rght) at (0, 5) {$\vdots$};
\node[] (x1x0) at (0, 7) {$a_1a_0$};
\node[] (x0y) at (0, 9) {$a_0b$};

\node[] (yI) at (0, -3) {$b_\omega I_b$}; 
\node[] (lft) at (0, -5) {$\vdots$};
\node[] (y1y0) at (0, -7) {$b_1b_0$};
\node[] (y0x) at (0, -9) {$b_0a$};

\node[] (yz) at (0, -1) {$bc$};
\node[] (xz) at (0, 1) {$ac$};

\path[thick,->,shorten >=3pt,shorten <=3pt] (rght.north) edge  (x1x0.south);
\path[thick,->,shorten >=3pt,shorten <=3pt] (lft.south) edge  (y1y0.north);

\path[thick,->,shorten >=3pt,shorten <=3pt] (x1x0.north) edge  (x0y.south);
\path[thick,->,shorten >=3pt,shorten <=3pt] (xI.north) edge  (rght.south);
\path[thick,->,shorten >=3pt,shorten <=3pt] (xz.north) edge  (xI.south);

\path[thick,->,shorten >=3pt,shorten <=3pt] (y1y0.south) edge  (y0x.north);
\path[thick,->,shorten >=3pt,shorten <=3pt] (yI.south) edge  (lft.north);
\path[thick,->,shorten >=3pt,shorten <=3pt] (yz.south) edge  (yI.north);

\path[thick,shorten <=2.5pt, shorten >=2.5pt,<->] (yz.north) edge  (xz.south);

\path[thick,->,shorten >=5pt,shorten <=4pt] (xz.north west) [out=130,in=225] edge  (x1x0.south west);
\path[thick,->,shorten >=5pt,shorten <=4pt] (xz.north west) [out=130,in=225] edge  (x0y.south west);

\path[thick,->,shorten >=5pt,shorten <=4pt] (yz.north east) [out=45,in=-45] edge  (x1x0.south east);
\path[thick,->,shorten >=4pt,shorten <=4pt] (yz.north east) [out=45,in=-10] edge  (x0y.south east);

\path[thick,->,shorten >=5pt,shorten <=4pt] (xI.north east) [out=45,in=-45] edge  (x0y.south east);
\path[thick,->,shorten >=4pt,shorten <=4pt] (xI.north west) [out=110,in=-100] edge  (x1x0.south west);

\path[thick,->,shorten >=5pt,shorten <=4pt] (yz.south east) [out=-45,in=45] edge  (y1y0.north east);
\path[thick,->,shorten >=5pt,shorten <=4pt] (yz.south east) [out=-45,in=45] edge  (y0x.north east);

\path[thick,->,shorten >=5pt,shorten <=4pt] (yI.south east) [out=-45,in=75] edge  (y0x.north east);
\path[thick,->,shorten >=5pt,shorten <=4pt] (yI.south west) [out=-100,in=95] edge  (y1y0.north west);

\path[thick,->,shorten >=5pt,shorten <=4pt] (xz.south west) [out=225,in=130] edge  (y1y0.north west);
\path[thick,->,shorten >=5pt,shorten <=4pt] (xz.south west) [out=225,in=130] edge  (y0x.north west);
\end{scope}

\end{tikzpicture}
\caption{The infinite meet but not join semidistributive
lattice
$\hat{\mathbf{O}}_\omega$,   
and the core of its dual space} % (Example~5.2)}
\label{fig:dual-ex2} 
\end{figure}

Let $\hat{\mathbf{O}}_\omega$ ($\mathbf{O}_\omega$ with a ``hat'') 
    be the lattice with two infinite chains $0<a_0<a_1<\dots<a_\omega<a<1$ and $0<b_0<b_1<\dots<b_\omega < b<1$ and an element $c$ such that $a,b,c$ are incomparable and $c = a_\omega \vee b_\omega$. 
    (See Figure~\ref{fig:dual-ex2}.) 
    
One can check that MDFIPs, in which $a$'s appear, naturally arising in the edge relation $E$ are 
given by 
\begin{gather*}
a_{j+1}a_jEa_{i+1}a_i, \quad \quad 
    a_{i+1}a_iEb_{k+1}b_k, \\
a_{j+1}a_j E a_{0}y, \quad \quad 
b_0a E a_\omega I_a,  
\quad \quad b_0a E a_{i+1} a_i, \quad \quad
a_{i+1} a_i E b_0a, \\ a_{i+1}a_iEb_{\omega}I_b, \quad \quad a_{\omega}I_aEb_{k+1}b_k, \quad \quad       a_{\omega}I_aEb_{\omega}I_b, \\ 
ac E bc, \quad \quad ac E a_{j+1}a_j, \quad \quad ac E a_\omega I_a %,
\end{gather*}
for all $i,j,k \in \omega$ with $i<j$. Swapping all $a$'s with $b$'s above will give us, with the previous sublist, the list of all MDFIPs in the edge relation $E$.  
The dual space is drawn 
    in~Figure~\ref{fig:dual-ex2} 
    with the double arrows of the forms $a_{i+1}a_iEb_{k+1}b_k$ and $b_{k+1}b_kEa_{i+1}a_i$ 
    not being presented 
   to make the diagram more readable. 

   The dual space
    $\mathcal{D}(\hat{\mathbf{O}}_\omega)$
    has base set 
    \[
    \{a_0b, a_{1}a_0, a_{2}a_1, ..., a_\omega I_a, ac\} \cup \{b_0a, b_{1}b_0, b_{2}b_1, ..., b_\omega I_b, bc\}
    \]
    where $I_a = \{0, a_0, a_1, a_2,...\}$ and $I_b = \{0, b_0, b_1, b_2,...\}$. 

To describe the basic open sets of the topology, we note that the intersections of the subbasic open sets are very similar to the previous example. The difference is that 
in this case there are four sets, which can create more infinite basic open sets, namely 
the intersections $W_a\cap V_{a_i}$, $W_b\cap V_{b_i}$, $W_c\cap V_{a_i}$ and $W_c\cap V_{b_i}$ are infinite for all $i \in \omega$. 

One can immediately see that in the dual digraph any two different vertices do not share the same filter. Hence the dual digraph satisfies the condition (uMSD), which by Corollary~\ref{coro:(dSD)}(ii) and Remark~\ref{checking-u-versions} witnesses that the lattice $\hat{\mathbf{O}}_\omega$ is meet semidistributive.

On the other hand, the dual digraph of the lattice $\hat{\mathbf{O}}_\omega$  does not satisfy the condition 
 (uJSD), and thus the condition (dJSD), %(dJSD)  
    since the 
    vertices 
    $ac$ and $bc$ share the same ideal.  This lattice is not 
   join semidistributive as on the lattice side we 
   easily  
   see that  $c\vee a= c\vee b  =1$ but $c\vee(a\wedge b)=c$. 
\end{eg}

The two parts of Example~\ref{Example:SD} as well as the part (i) of the following example might lead the reader to think that our conditions  (dJSD), (dMSD) and (dJSD) might be also necessary conditions on the dual digraphs %of lattices 
for the lattices to be join semidistributive, meet semidistributive and semidistributive, respectively. Yet the part (ii) of the following example shows that this is not the case. 

\begin{figure}[ht]
\centering
\begin{tikzpicture}[scale=0.7]
%L_5
\begin{scope}[]
\node[unshaded] (b0) at (0,0) {};
\node[unshaded] (b1) at (0,1.5) {};
\node[unshaded] (b2) at (0,3) {};
\node[unshaded] (b3) at (0,4.5) {};

\node[unshaded] (a1) at (-2,3) {};
\node[unshaded] (a3) at (-1.25,6) {};

\node[unshaded] (c0) at (2,1.5) {};
\node[unshaded] (c2) at (1.25,4.5) {};

\node[unshaded] (b) at (0,7.5) {};

\node[] (rght) at (1,7.5) {$\vdots$};
\node[] (lft) at (-1,7.5) {$\vdots$};
\node[] (mdle) at (0,5.75) {$\vdots$};
\node[unshaded] (top) at (0,9) {};

% Order
\draw[order] (b0)--(b1)--(b2)--(b3)--(mdle)--(b)--(top);
\draw[order] (b1)--(a1)--(a3)--(lft)--(top);
\draw[order] (b0)--(c0)--(c2)--(rght)--(top);
\draw[order] (b2)--(c2);
\draw[order] (b3)--(a3);

\node[label,anchor=east] at (b0) {$b_0$};
\node[label,anchor=west] at (b1) {$b_1$};
\node[label,anchor=east] at (b2) {$b_2$};
\node[label,anchor=west] at (b3) {$b_3$};
\node[label,anchor=west, yshift=-5pt] at (b) {$b_\omega$};
\node[label,anchor=south] at (top) {$1$};
\node[label,anchor=east] at (a1) {$a_1$};
\node[label,anchor=east] at (a3) {$a_3$};
\node[label,anchor=west] at (c0) {$c_0$};
\node[label,anchor=west] at (c2) {$c_2$};

\end{scope}
%G_(L_5)

\begin{scope}[xshift=8.2cm, scale=0.77]

\node[] (bIab) at (-3, 16) {$b_{\omega} I_{ab}$};
\node[] (bIbc) at (3, 16) {$b_\omega I_{bc}$};

\node[] (c0Iab) at (-4, -1) {$c_0I_{ab}$};
\node[] (a1Ibc) at (4, -1) {$a_1I_{bc}$};

\node[] (c0b) at (-2, 2) {$c_0b_\omega$};
\node[] (a1b) at (2, 2) {$a_1b_\omega$};

\node[] (mddl) at (0, 14) {$\vdots$};
\node[] (b4a3) at (0, 11.5) {$b_4a_3$};
\node[] (b3c2) at (0, 9) {$b_3c_2$};
\node[] (b2a1) at (0, 6.5) {$b_2a_1$};
\node[] (b1c0) at (0, 4) {$b_1c_0$};

\path[thick,->,shorten >=3pt,shorten <=3pt] (mddl.south) edge  (b4a3.north);
\path[thick,->,shorten >=3pt,shorten <=3pt] (b4a3.south) edge  (b3c2.north);
\path[thick,->,shorten >=3pt,shorten <=3pt] (b3c2.south) edge  (b2a1.north);
\path[thick,->,shorten >=3pt,shorten <=3pt] (b2a1.south) edge  (b1c0.north);

\path[thick,shorten <=2.5pt, shorten >=2.5pt,<->] (bIbc.west) edge  (bIab.east);
\path[thick,shorten <=2.5pt, shorten >=2.5pt,<->] (bIbc)  edge  (bIab);

\path[thick,->,shorten >=3pt,shorten <=3pt] (bIab)  edge  (mddl);
\path[thick,->,shorten >=3pt,shorten <=3pt] (bIab)  edge  (b4a3);
\path[thick,->,shorten >=3pt,shorten <=3pt] (bIab)  edge  (b3c2);
\path[thick,->,shorten >=3pt,shorten <=3pt] (bIab)  edge  (b2a1);
\path[thick,->,shorten >=3pt,shorten <=3pt] (bIab)  edge  (b1c0);

\path[thick,->,shorten >=3pt,shorten <=3pt] (bIbc)  edge  (mddl);
\path[thick,->,shorten >=3pt,shorten <=3pt] (bIbc)  edge  (b4a3);
\path[thick,->,shorten >=3pt,shorten <=3pt] (bIbc)  edge  (b3c2);
\path[thick,->,shorten >=3pt,shorten <=3pt] (bIbc)  edge  (b2a1);
\path[thick,->,shorten >=3pt,shorten <=3pt] (bIbc)  edge  (b1c0);

\path[thick,shorten <=2.5pt, shorten >=2.5pt,<->] (c0b.east) edge  (a1b.west);
\path[thick,shorten <=2.5pt, shorten >=2.5pt,<->] (c0b)  edge  (c0Iab);
\path[thick,shorten <=2.5pt, shorten >=2.5pt,<->] (a1b)  edge  (a1Ibc);
\path[thick,->,shorten >=3pt,shorten <=3pt] (c0Iab)  edge  (a1b);
\path[thick,->,shorten >=3pt,shorten <=3pt] (a1Ibc)  edge  (c0b);

\path[thick,->,shorten >=3pt,shorten <=3pt] (c0b) [out=67.5,in=225] edge  (mddl);
\path[thick,->,shorten >=3pt,shorten <=3pt] (c0b) [out=67.5,in=240] edge  (b4a3);
\path[thick,->,shorten >=1pt,shorten <=3pt] (c0b) [out=67.5,in=240] edge  (b2a1);

\path[thick,->,shorten >=3pt,shorten <=3pt] (a1b) [out=112.5,in=315] edge  (mddl);
\path[thick,->,shorten >=0.1pt,shorten <=3pt] (a1b) [out=112.5,in=305] edge  (b3c2);
\path[thick,->,shorten >=3pt,shorten <=3pt] (a1b)  edge  (b1c0);

\path[thick,shorten <=2.5pt, shorten >=2.5pt,<->] (c0Iab) [out=95,in=225] edge (bIab);
\path[thick,shorten <=2.5pt, shorten >=2.5pt,<->] (a1Ibc) [out=85,in=315] edge (bIbc);

\path[thick,->,shorten >=3pt,shorten <=3pt] (c0b) [out=95,in=260] edge  (bIab);
\path[thick,->,shorten >=3pt,shorten <=3pt] (a1b) [out=85,in=280] edge  (bIbc);

\end{scope}

\end{tikzpicture}
\caption{The infinite lattice $\mathbf{R}$, which is neither join semidistributive nor meet semidistributive, and its dual space.}\label{fig:dual-ex3}
\end{figure}

\begin{eg}
(i) [\textbf{The ``rocket'', an infinite lattice that is neither join semidistributive nor meet semidistributive}]\label{Rocket}

Let $\mathbf{R}$ be the lattice on left in~Figure~\ref{fig:dual-ex3} (cf.~\cite[Fig. 3]{DPR-75}) with three infinite chains: $b_0< b_1< b_2< ...< b_\omega<1$, then $b_1<a_1< a_3<a_5< ...< 1$ and finally $b_0<c_0<c_2<c_4<...<1$. Moreover $b_i <a_i$ for all odd $i \in \omega$ and $b_k < c_k$ for all even $k\in \omega$. The dual space of the lattice $\mathbf{R}$ is drawn in~Figure~\ref{fig:dual-ex3}, where $I_{ab}=\{a_{2i+1}\}_{i\in \omega}\cup \{b_j\}_{j \in \omega}$ and $I_{bc}=\{c_{2i}\}_{i \in \omega }\cup \{b_j\}_{j \in \omega }$. To ``reduce clutter'' and present the dual space more transparently, some relations from the pairs $c_0I_{ab}$ and $a_1I_{bc}$ have not been drawn in. This lattice can easily be seen not to be join semidistributive, since on the lattice side we have $b_\omega \vee a_1 = b_\omega \vee c_0 = 1$, but $b_\omega \vee (a_1\wedge c_0) = b_\omega$. Nor is this lattice meet semidistributive, since  $a_1\wedge b_{\omega} = a_1\wedge c_2 = b_1$ whereas $a_1\wedge (b_{\omega}\vee c_2) = a_1$. On the digraph we can see the pairs $c_0b_\omega$ and $a_1b_\omega$ share the same ideal, thus the dual does not satisfy the condition (uJSD), and thus the condition (dJSD). Also the pairs $b_\omega I_{ab}$ and $b_\omega I_{bc}$ demonstrate the failure of (uMSD) and hence (dMSD). 

To complete the description of the dual space and to describe the basic open sets 
of the topology, we note that the subbasic sets fall into four types. The first 
type of subbasic sets are those 
pertaining to the $a_{2i+1}$:  
for $i\in \omega$ 
we have that 
$W_{a_{2i+1}} = \{b_{2j+1}c_{2j}\}_{j \leq i}\cup 
\{b_{2(j+1)}a_{2j-1}\}_{j\leq i} \cup 
\{a_1b_\omega, a_1I_{bc}\}$ 
and $V_{a_{2i+1}} = 
\{b_{2j+2}a_{2j+1}\}_{i\leq j}
\cup \{c_0I_{ab}, b_\omega I_{ab}\}$. The second 
type 
pertains to the $c_{2i}$'s:  
for $i\in \omega$  
we get $W_{c_{2i}} =
\{b_{2j+1}c_{2j}\}_{1\leq j \leq i} \cup 
\{b_{2j}a_{2j+1} \}_{1 \leq j \leq i} 
\cup 
\{c_0b_\omega, c_0I_{ab}\}$ and $V_{c_{2i}} = 
\{b_{2j+1}c_{2j}\}_{i\leq j}
\cup \{a_1I_{bc}, b_\omega I_{bc}\}$. For the final two 
types, 
set $d_{k} = c_k$ if $k$ is even and $d_{k} = a_k$ if $k$ is odd 
for $k\in \omega$. Then the subbasic sets associated to $b_{\omega}$ can be described as $W_{b_\omega}=\{b_{j+1}d_j \}_{j\in \omega}\cup\{b_\omega I_{ab}, b_\omega I_{bc}\}$ and $V_{b_\omega}=\{c_0b_\omega, a_1b_\omega\}$. The final type are subbasic sets of the form $V_{b_i}=\{c_0I_{ab}, c_0b_\omega, a_1I_{bc}, a_1b_\omega, b_\omega I_{ab}, b_\omega I_{bc}\} \cup \{b_{2j+1}c_{2j}\}_{2j \geq i} \cup \{b_{2j+2}a_{2j+1} \}_{ 2j+1\geq  i}$ and $W_{b_i} = \{b_{j}d_{j-1}\}_{j\leq i}$.

\begin{figure}[ht]
\centering
\begin{tikzpicture}[scale=0.8]
%L_5
\begin{scope}[]
\node[unshaded] (b0) at (0,0) {};
\node[unshaded] (b1) at (0,1.5) {};
\node[unshaded] (b2) at (0,3) {};
\node[unshaded] (b3) at (0,4.5) {};

\node[unshaded] (a1) at (-2,3) {};
\node[unshaded] (a3) at (-1.25,6) {};

\node[unshaded] (c0) at (2,1.5) {};
\node[unshaded] (c2) at (1.25,4.5) {};

\node[unshaded] (a) at (-0.75,8) {};
\node[unshaded] (b) at (0,7.25) {};
\node[unshaded] (c) at (0.75,8) {};
\node[unshaded] (top) at (0,8.75) {};

\node[] at (-1.15,6.4) {$\cdot$};
\node[] at (-1.05,6.8) {$\cdot$};
\node[] at (-0.95,7.2) {$\cdot$};
\node[] at (-0.85,7.6) {$\cdot$};

\node[] at (1.18,5) {$\cdot$};
\node[] at (1.11,5.5) {$\cdot$};
\node[] at (1.04,6) {$\cdot$};
\node[] at (0.97,6.5) {$\cdot$};
\node[] at (0.9,7) {$\cdot$};
\node[] at (0.83,7.5) {$\cdot$};

\node[] at (0,4.95) {$\cdot$};
\node[] at (0,5.4) {$\cdot$};
\node[] at (0,5.85) {$\cdot$};
\node[] at (0,6.3) {$\cdot$};
\node[] at (0,6.75) {$\cdot$};

% Order
\draw[order] (b0)--(b1)--(b2)--(b3);
\draw[order] (b1)--(a1)--(a3);
\draw[order] (b0)--(c0)--(c2);
\draw[order] (b2)--(c2);
\draw[order] (b3)--(a3);
\draw[order] (b)--(a)--(top)--(c)--(b);

\node[label,anchor=east] at (b0) {$b_0$};
\node[label,anchor=west] at (b1) {$b_1$};
\node[label,anchor=east] at (b2) {$b_2$};
\node[label,anchor=west] at (b3) {$b_3$};
\node[label,anchor=west, yshift=-3pt, xshift=-2pt] at (b) {$b_\omega$};
\node[label,anchor=south] at (top) {$1$};
\node[label,anchor=east] at (a1) {$a_1$};
\node[label,anchor=east] at (a3) {$a_3$};
\node[label,anchor=east] at (a) {$a_\omega$};
\node[label,anchor=west] at (c0) {$c_0$};
\node[label,anchor=west] at (c2) {$c_2$};
\node[label,anchor=west] at (c) {$c_\omega$};

\end{scope}

\end{tikzpicture}
\caption{An infinite lattice that satisfies JSD and MSD, but does not satisfy (uMSD) and hence its dual  does not satisfy (dMSD).}\label{fig:ex4}
\end{figure}

[(ii) \textbf{An infinite semidistributive lattice, whose dual fails (dMSD)}]

As in Example~\ref{Example:SD} where we modified (i) to obtain (ii),  here we modify $\mathbf{R}$ from part (i). Let $\bar{\mathbf{R}}$ be the lattice $\mathbf{R}$ with the elements $a_\omega$ and $c_\omega$ added. (This is in fact dual to the lattice in Figure 10 of \cite{JR79}.)  The lattice $\bar{\mathbf{R}}$ is easily seen (on the ``lattice side'') to satisfy both JSD and MSD, hence it is semidistributive. Yet the dual digraph of $\bar{\mathbf{R}}$  does not satisfy the condition (uMSD) since the filter ${\uparrow}b_\omega$ can be maximally paired with either the ideal $\{a_{2i+1}\}_{i\in\omega} \cup \{b_j\}_{j\in \omega}$ or the ideal $\{c_{2i}\}_{i\in\omega} \cup \{b_j\}_{j\in\omega}$.  
\end{eg}

From the above it follows that to characterise the duals of general (infinite) lattices that are join semidistributive or meet semidistributive or semidistributive, one needs to weaken our conditions (dJSD), (dMSD) and (dSD), respectively. 

\begin{prob}\label{prob1}
\emph{What are necessary and sufficient conditions for the dual of a general (infinite) lattice $\Lalg$ so that the lattice  $\Lalg$ is join semidistributive (resp. meet semidistributive resp. semidistributive)?}
\end{prob}

The conditions (dJSD), (dMSD) and (dSD) in Theorem~\ref{prop:SJimpliesJSD} and Corollary~\ref{coro:(dSD)}, or their better visualisable versions (uJSD), (uMSD) and (uSD) in accordance to Remark~\ref{checking-u-versions}, strengthen the (S) condition of the definition of TiRS digraphs. We formulate now our second open problem:

\begin{prob}\label{prob2}
\emph{Can the conditions (dJSD), (dMSD) and (dSD) as  strengthened versions of the (S) condition for the duals of general lattices interact with the topological conditions of Plo\v{s}\v{c}ica spaces (Definition~\ref{def:ploscicaspaces}) to produce simplified definitions of the dual spaces of general join-semidistributive lattices, meet-semidistributive lattice and  semidistributive lattices, respectively?}
\end{prob}

Some of the most prominent semidistributive lattices are free lattices. For three or more generators, it is well-known that the free lattices are infinite. We hope that the results of Sections~\ref{sec:ploscicaspaces} and~\ref{sec:dualspacesSDL} here can be used as a platform to study free lattices via their dual spaces. The dual digraphs and spaces would of course have to be modified to accommodate the lack of bounds of the lattices. The dual digraphs would then have both a sink and a source, in a similar manner to the bounded Priestley spaces dual to unbounded distributive lattices (cf.~\cite[Section~1.2 and Theorem~4.3.2]{CD98}). 

We are ready to present our third and final open problem: 

\begin{prob}\label{prob3}
\emph{Identify conditions on the dual TiRS digraph that will correspond to Whitman's condition~\cite{Whit41} on the lattice. More generally, describe the dual spaces of the free lattices.}
\end{prob}

\subsection*{Acknowledgements}
%%%%%%%%%%%%%%%%%%%%%%%%%%%%%%%%%%%%%%%%%%%%%%%%%%%%%%%%%%%%%%%%%%

The first author acknowledges the hospitality of Matej Bel University during a visit in August-September 2024, 
as well as the National Research Foundation (NRF) of South Africa grant 127266 and the Slovak National Scholarship Programme. 
The second author acknowledges support by Slovak VEGA grant 1/0152/22 and his 
his appointment as a Visiting Professor at the University
of Johannesburg from June 2020. 
The third author is grateful to be funded by the IRMIA++ Interdisciplinary Thematic Institute of the ITI 2021-2028 programme of the University of Strasbourg, the CNRS and Inserm. It has received financial support from the IdEx Unistra (ANR-10-IDEX-0002), and funding under the Programme d'Investissements d'Avenir as part of the SFRI-STRAT'US project (ANR-20-SFRI-0012). 

The authors are grateful to the referees of the original submission of this paper for their valuable 
comments. In particular, they pointed us to~\cite[Section 11]{BCM25} and made many important suggestions that led to an improvement of Section~\ref{sec:dualspacesSDL}.

The first two authors would like to express their gratitude to Hilary Priestley, to whom this paper is dedicated,
for enabling them 
to be firstly her students and later her collaborators.  
This supervision and the collaboration with her  have been crucial for their further mathematical careers.
%%%%%%%%%%%%%%%%%%%%%%%%%%%%%%%%%%%%

\end{document}